\newcommand\cB{\mathcal{B}}
\newcommand\cC{\mathcal{C}}
\newcommand\cE{\mathcal{E}}
\newcommand\cM{\mathcal{M}}
\newcommand\cO{\mathcal{O}}
\newcommand\cU{\mathcal{U}}
\newcommand{\BB}{\mathcal B}
\newcommand{\EE}{\mathcal E}
\newcommand{\XX}{\mathcal X}
\newcommand{\N}{\mathbb{N}}
\newcommand{\R}{\mathbb{R}}
\newcommand{\Z}{\mathbb{Z}}
\newcommand{\C}{\mathbb{C}}
\newcommand{\CP}{\mathbb{CP}}
\newcommand{\rd}{{\rm d}}
\newcommand{\rT}{{\rm T}}
\newcommand{\rD}{{\rm D}}
\newcommand{\id}{{\rm id}}
\newcommand\pt{{\rm pt}}
\newcommand\pr{{\rm pr}}
\newcommand\ev{{\rm ev}}
\renewcommand{\ker}{{ \rm ker}}
\newcommand{\im}{{\rm im}}
\newcommand\eps{\varepsilon}
\renewcommand\epsilon{\varepsilon}
\renewcommand\phi{\varphi}
\renewcommand\tilde{\widetilde}
\renewcommand\bar{\overline}
\newcommand{\inner}[2]{\left\langle #1, #2\right\rangle}
\newtheorem{thm}{Theorem}[section]
\newtheorem{defn}[thm]{Definition}
\newtheorem{rmk}[thm]{Remark}
\newtheorem{lem}[thm]{Lemma}
\newtheorem{cor}[thm]{Corollary}
\newenvironment{itemlist}
   { \begin{list} {$\bullet$}
         { \setlength{\topsep}{.5ex}  \setlength{\itemsep}{0ex} \setlength{\leftmargin}{2.5ex} } }
   { \end{list} }
\title[A Polyfold proof of Gromov's Non-squeezing Theorem]{A Polyfold proof of\\
Gromov's Non-squeezing Theorem}
\author[Beckschulte, Datta, Seifert, Vocke, Wehrheim]{Franziska Beckschulte, Ipsita Datta, Irene Seifert,\\Anna-Maria Vocke, and Katrin Wehrheim}
\date{\today}
\begin{document}
\maketitle
\begin{abstract}
We re-prove Gromov's non-squeezing theorem by applying Polyfold Theory to a simple Gromov-Witten moduli space. 
Thus we demonstrate how to utilize the work of Hofer-Wysocki-Zehnder to give proofs involving moduli spaces of pseudoholomorphic curves that are relatively short and broadly accessible, while also fully detailed and rigorous. 
We moreover review the polyfold description of Gromov-Witten moduli spaces in the relevant case of spheres with minimal energy and one marked point.
\end{abstract}

\section{Introduction}
\label{sec:introduction}

The non-squeezing theorem, proven by Mikhail Gromov in 1985, essentially excludes nontrivial symplectic embeddings between balls $B_R$ and cylinders $Z_r$ of radius $R,r>0$ given by
\begin{align*}
B_R :&= B_R^n := \bigl\{ (x_i,y_i)_{i=1,\ldots,n} \in \R^{2n} \;\big|\; \textstyle \sum_{i=1}^n x_i^2 + y_i^2 \leq R \bigr\}, \text{ and} \\
Z_r :&= B^2_r \times \R^{2n-2} := \bigl\{ (x_i,y_i)_{i=1,\ldots,n} \in \R^{2n} \;\big|\; x_1^2 + y_1^2 \leq r \bigr\},
\end{align*}
in any dimension $2n\geq 4$.
More precisely, we equip both the closed balls and closed cylinders above with the standard symplectic form $\omega_{\text{st}} = \sum_{i=1}^n dx_i \wedge dy_i$ as subsets of $\R^{2n}$ with coordinates $x_1,y_1,\ldots,x_n,y_n$. Then it is easy to see that for any choice of $R$ and $r$ there are volume preserving embeddings $B_R \hookrightarrow Z_r$, due to the infinite length of the cylinder. If we only consider symplectic embeddings $\varphi:B_R \hookrightarrow Z_r$ with $\varphi^*\omega_{\text{st}}=\omega_{\text{st}}$, there are trivial embeddings for $R\leq r$. However, symplectic embeddings cannot exist for $R>r$, as was shown by Gromov \cite{Gromov} -- with various more detailed proofs published subsequently, e.g.\
\cite{hummel,HZ,McDuffSalamon2,Wendl}.\footnote{
Theorem~\ref{thm:nonsqueezing}, stated for closed balls and cylinders, implies the analogous statement for open balls and cylinders as follows: Assume there is a symplectic embedding $\varphi$ of the open ball of radius $R$ into the open cylinder of radius $r$, where $R>r$. Then there is $\varepsilon>0$ such that $R-\varepsilon>r + \varepsilon$, and $\varphi$ restricts to a symplectic embedding of the closed ball of radius $R-\varepsilon$ into the closed cylinder of radius $r+\varepsilon$. This is a contradiction to Theorem~\ref{thm:nonsqueezing}.
}

\begin{thm}
\label{thm:nonsqueezing}
If there is a symplectic embedding
$\varphi: B_R \hookrightarrow Z_r$,
then $R\leq r$.
\end{thm}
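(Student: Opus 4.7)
The plan is to derive a contradiction from the assumption $R>r$ by producing inside $B_R$ a proper $J_0$-holomorphic curve through the origin whose symplectic area is both bounded above by $\pi(r+\epsilon)^2$ for some small $\epsilon>0$ and bounded below by $\pi R^2$ via monotonicity. First I would compactify the target: fix $\epsilon>0$ with $r+\epsilon<R$, pick a bounded open set $\Omega\subset\R^{2n-2}$ containing the projection of $\varphi(B_R)$ to the last $2n-2$ coordinates, and embed $\Omega$ symplectically into a torus $T^{2n-2}$ of sufficiently large volume. Let $S^2_{r+\epsilon}$ denote the $2$-sphere with an area form of total area $\pi(r+\epsilon)^2$, and set $M:=S^2_{r+\epsilon}\times T^{2n-2}$ with the product symplectic form. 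Then $\varphi(B_R)$ embeds into $M$ via $B^2_r\times\Omega\hookrightarrow M$, and I choose an $\omega$-compatible almost complex structure $J$ on $M$ that equals $\varphi_*J_0$ on $\varphi(B_R)$ and the split integrable structure $j_{S^2}\oplus j_T$ outside a neighborhood of $\varphi(B_R)$.

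Next I would apply polyfold theory to the Gromov-Witten moduli space $\cM_{0,1}(M,A,J)$ of $J$-holomorphic spheres in the class $A:=[S^2\times\pt]$ with one marked point, together with the evaluation map $\ev:\cM_{0,1}(M,A,J)\to M$. Since $c_1(A)=2$, the expected dimension equals $\dim M=2n$, so after polyfold perturbation $\ev$ carries a well-defined degree, which is the Gromov-Witten number the paper computes. By polyfold invariance this number depends neither on $J$ nor on the perturbation, so it can be evaluated for the split structure $J_{\text{split}}$: any class-$A$ sphere factors as a degree-one holomorphic map to $S^2$ times a constant map to $T^{2n-2}$, and reparameterization plus marked-point normalization identifies $\cM_{0,1}(M,A,J_{\text{split}})$ with $M$ itself, with $\ev$ the identity, so the number equals $1$. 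In particular, for our original $J$, every small polyfold perturbation admits a solution whose evaluation lies at $\varphi(0)\in M$; taking a Gromov limit as the perturbation shrinks yields a genuine $J$-holomorphic stable map in class $A$ through $\varphi(0)$. Since $\pi_2(M)=\Z\cdot A$ and $\omega(A)$ is the minimal positive value of $\omega$ on $\pi_2(M)$, no nontrivial bubbling can occur, and the limit is a single non-constant $J$-holomorphic sphere $u:S^2\to M$ with $\varphi(0)\in u(S^2)$ and $\int_{S^2}u^*\omega=\pi(r+\epsilon)^2$.

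Finally, set $\Sigma:=u^{-1}(\varphi(B_R))\subset S^2$, an open subset containing a preimage of $\varphi(0)$, and $\tilde u:=\varphi^{-1}\circ u|_\Sigma:\Sigma\to B_R$. Since $J=\varphi_*J_0$ on $\varphi(B_R)$, the map $\tilde u$ is a proper $J_0$-holomorphic map into the closed ball passing through the origin, and
\[
\int_\Sigma \tilde u^*\omega_{\text{st}} \;\le\; \int_{S^2} u^*\omega \;=\; \pi(r+\epsilon)^2.
\]
The monotonicity inequality for proper holomorphic curves through the center of a round ball in $\C^n$ bounds the left-hand side from below by $\pi R^2$, so $\pi R^2\le\pi(r+\epsilon)^2$. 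Letting $\epsilon\to 0$ yields $R\le r$, contradicting $R>r$.

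The central obstacle, and the reason polyfolds are needed, is the existence claim in the second paragraph: the almost complex structure $J$ is rigidly constrained on $\varphi(B_R)$ to equal $\varphi_*J_0$, so classical genericity arguments for transversality of the Cauchy-Riemann operator need not apply, and $\cM_{0,1}(M,A,J)$ is not known to be regular. Polyfold theory supplies the abstract perturbation framework required to cut out $\cM_{0,1}(M,A,J)$ transversely in the weakest sense necessary to define the degree of $\ev$, together with Gromov compactness in the form needed to pass from perturbed solutions back to a true $J$-holomorphic sphere. Verifying that the Gromov-Witten moduli space in this simple case fits into the polyfold framework, and that the resulting invariant is the deformation-invariant integer $1$, is the technical heart of the proof and the content of the remainder of the paper.
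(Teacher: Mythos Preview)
Your proposal is correct in outline and arrives at the same contradiction, but it takes a different route from the paper. The paper builds the point constraint $u(z_0)=p_0=\varphi(0)$ directly into the moduli spaces $\cM_t$, so that $\cM_0$ is a single point (transversely cut out) and, under the assumption $\cM_1=\emptyset$, the polyfold regularization of $\bigcup_t\cM_t$ is a compact $1$-manifold with exactly one boundary point---an immediate contradiction. You instead work with the unconstrained one-marked-point moduli space, compute that the evaluation map has degree $1$ via the split almost complex structure, and then extract an honest $J$-curve through $\varphi(0)$ by taking a Gromov limit of perturbed solutions as the perturbation shrinks. The paper explicitly discusses your approach as an alternative in Remark~\ref{rmk:2n+1-dim-cobordism}; there the argument is organized slightly more cleanly: assuming no $J_1$-curve meets $p_0$, one supports the perturbation away from $\ev^{-1}(p_0)$, so the perturbed evaluation map misses $p_0$ and has degree $0$, contradicting the degree-$1$ computation on the $J_0$ side via the cobordism. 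This avoids your limiting step, which is valid but requires an extra compactness argument to ensure the limit still evaluates to $\varphi(0)$.

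Two technical points you gloss over that the paper handles carefully: first, you cannot in general take $J=\varphi_*J_0$ on all of $\varphi(B_R)$---the interpolation to the ambient structure needs room, so the paper restricts to $B_{R'}$ for $R'<R$ (Lemma~\ref{lem:J1}) and recovers $R$ only in the final limit. Second, your monotonicity step asserts the bound $\pi R^2$ for a ``proper'' map into the closed ball; the paper instead passes to compact subdomains $S_k=v^{-1}(B_{R_k})$ with $R_k\nearrow R'$ chosen as regular values, so that Lemma~\ref{lem:holmonotone} applies on domains with smooth boundary. Neither point is a genuine obstruction, but both need to be said.
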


The idea of the proof is to construct an almost complex structure $J_1$ on the cylinder that pulls back to the standard complex structure $\varphi^*J_1 = J_{\textup{st}}$ on the ball, and find a non-constant $J_1$-holomorphic curve $C_1\subset Z_r$ passing through $\phi(0)$ with 
symplectic area at most $\pi r^2$. Then the pullback $\phi^{-1}(C_1)\subset B_R$ is a $J_{\textup{st}}$-holomorphic curve, and thus a minimal surface with respect to the standard metric on $\R^{2n}$. As it passes through the center of the ball $0\in B_R$, comparison with the flat disk of area $\pi R^2$ implies $R\leq r$ via a monotonicity lemma. 
To find such a $J_1$-holomorphic curve, one observes that the disk cross-section of $Z_r$ at the height of $\phi(0)$ has the required properties, except that it is holomorphic with respect to the standard complex structure $J_0$. The ideas is then to establish the existence of $J_t$-holomorphic curves for a path $J_t$ of almost complex structures connecting $J_0$ to $J_1$.
This requires subtle geometric analysis that is best performed by studying spheres in the closed symplectic manifold $\mathbb{CP}^1 \times T^{2n-2}$ as described in \S\ref{sec:compatifying-target-space}. In this setting, the main work is to find a pseudoholomorphic curve in the homology class $[\CP^1\times\{\pt\}]$ through a fixed point. 

\begin{thm}
\label{thm-intro:J1-sphere}
Given any point $p_0$ and compatible almost complex structure $J$ on $\mathbb{CP}^1 \times T$, there exists a $J$-holomorphic sphere $u : S^2 \rightarrow \mathbb{CP}^1 \times T$ with $p_0\in u(S^2)$ and homology class $u_*[S^2]=[\CP^1\times\{\pt\}]$.
\end{thm}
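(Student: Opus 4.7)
The plan is to set up the moduli space of $J$-holomorphic spheres with one marked point in the class $A := [\CP^1 \times \{\pt\}]$ as the zero set of an sc-Fredholm section $\bar\partial_J$ of a strong polyfold bundle $\mathcal{W} \to \mathcal{Z}$, following Hofer-Wysocki-Zehnder. Here $\mathcal{Z}$ is the polyfold of equivalence classes $[(u, z_0)]$ of stable maps from a pointed nodal sphere to $\CP^1 \times T$ representing $A$, the fiber of $\mathcal{W}$ over $[(u, z_0)]$ is an sc-completion of $\Omega^{0,1}(u^* \rT(\CP^1 \times T))$, and the sc-smooth evaluation $\ev : \mathcal{Z} \to \CP^1 \times T$ sends $[(u, z_0)] \mapsto u(z_0)$. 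The Fredholm index is $\dim(\CP^1 \times T) + 2 c_1(A) + 2\cdot 1 - 6 = 2n + 4 + 2 - 6 = 2n$, matching the dimension of the target of $\ev$.

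The first key step is that $\bar\partial_J^{-1}(0)$ is already compact in $\mathcal{Z}$, with no non-trivial bubbles. Any non-constant $J$-holomorphic sphere $v : S^2 \to \CP^1 \times T$ has $\omega$-area $\pi \cdot \deg(\pi_{\CP^1} \circ v) \geq \pi$: if the degree of the projection were zero, $v$ would factor up to homotopy through $\{*\} \times T$, hence be null-homotopic by $\pi_2(T) = 0$, hence have zero energy, hence be constant. A Gromov limit of curves with total area $\pi$ therefore admits no bubbles. The HWZ abstract perturbation theorem accordingly produces, for every sufficiently small sc-smooth multisection perturbation $\tau$, an oriented compact weighted branched suborbifold $\cM^\tau \subset \mathcal{Z}$ of dimension $2n$, and the evaluation restricts to $\ev_\tau : \cM^\tau \to \CP^1 \times T$ with a well-defined rational degree $\deg \ev_\tau \in \Q$ that is invariant under sc-Fredholm homotopies of the section.

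The second key step is to compute this degree at the split structure $J_0 = j_{\CP^1} \oplus J_T$. Any $J_0$-holomorphic $u = (u_1, u_2) : S^2 \to \CP^1 \times T$ in class $A$ has $u_2$ constant (since $\pi_2(T) = 0$ forces its $\omega_T$-energy to vanish) and $u_1 : S^2 \to \CP^1$ of degree $1$, hence $u_1 \in \mathrm{Aut}(\CP^1)$. Quotienting by source reparametrization while keeping the marked point, the unperturbed moduli space is diffeomorphic to $\CP^1 \times T$ via $\ev$, so $|\deg \ev_{J_0}| = 1$. Automatic transversality holds at $J_0$: the linearization at $(u_1, \mathrm{const})$ splits as a Cauchy-Riemann operator on $u_1^* \rT\CP^1 \cong \cO(2)$ together with the standard $\bar\partial$ on a trivial rank $2n-2$ bundle, both with vanishing $H^1$, so the unperturbed zero set is already a closed $2n$-manifold.

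Finally, connect the given $J$ to $J_0$ by a smooth path $(J_t)_{t \in [0,1]}$ of compatible almost complex structures. The uniform area bound $\pi$ rules out bubbling along the entire path, so the parametric sc-Fredholm perturbation yields an sc-cobordism between $\cM^\tau_{J_0}$ and $\cM^\tau_J$ commuting with $\ev$; invariance of degree then gives $|\deg \ev_{\tau, J}| = 1$, and $\ev_{\tau, J}$ is surjective. Picking perturbed solutions $u_n$ with $u_n(z_0^{(n)}) = p_0$ for a sequence of perturbations $\tau_n \to 0$, polyfold compactness extracts a Gromov-convergent subsequence whose limit — a single unperturbed $J$-holomorphic sphere in class $A$ by the no-bubbling argument — passes through $p_0$. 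The main obstacle is the polyfold setup itself: constructing $\mathcal{Z}$ and $\mathcal{W}$, verifying that $\bar\partial_J$ is sc-Fredholm and $\ev$ is sc-smooth, and deploying the perturbation and cobordism machinery. That technical groundwork is precisely what the paper reviews; once it is in place, the minimal-energy compactness and the explicit model at $J_0$ are conceptually clean.
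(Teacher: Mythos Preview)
Your argument is correct, but it follows a genuinely different route from the paper. The paper builds the point constraint $u(z_0)=p_0$ directly into the base M-polyfold, so that each $\cM_t$ is $0$-dimensional; it then argues by contradiction that if $\cM_1=\emptyset$, the regularized family moduli space is a compact $1$-manifold with a single boundary point $\cM_0=\{[u_0]\}$. Your approach instead works with the unconstrained one-marked-point Gromov--Witten polyfold of \cite{HWZgw}, computes that the evaluation map has degree $\pm 1$ at the split $J_0$, and transports this through a cobordism to $J$. The paper in fact sketches exactly your alternative in Remark~\ref{rmk:2n+1-dim-cobordism}. The trade-off: your route avoids the extra work of \S\ref{sec:polyfold-setup} (restricting the HWZ polyfold along $\ev^{-1}(p_0)$ via Filippenko's slicing), but pays for it with orientations, a degree theory for the evaluation map, and the final limiting step; the paper's route front-loads the polyfold restriction so that the endgame is a parity count of boundary points of a $1$-manifold, with no orientations needed.

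Two small points. First, you invoke multisections and weighted branched suborbifolds, but in this homology class every map has trivial isotropy (Lemma~\ref{lem:no-isotropy-in-our-homology-class}), so single-valued perturbations and honest manifolds suffice; this simplifies your degree argument to ordinary integer degree. Second, your last step---taking $\tau_n\to 0$ and extracting a limit through $p_0$---is not literally Gromov compactness, since the $u_n$ are not $J$-holomorphic. It is cleaner to argue by contradiction as in Remark~\ref{rmk:2n+1-dim-cobordism}: if no $J$-curve hits $p_0$, compactness of $\bar\partial_J^{-1}(0)$ lets you choose the perturbation supported away from $\ev^{-1}(p_0)$, so $p_0\notin\ev(\cM^\tau_J)$ and the degree vanishes, contradicting $|\deg\ev_{J_0}|=1$.
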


Assuming this existence, \S\ref{sec:using-monotonicity} proves Theorem~\ref{thm:nonsqueezing} by applying a monotoni\-city lemma for pseudoholomorphic maps that we discuss in Appendix~\ref{app:monotonicity-lemma-for-pseudoholomorphic-maps}.

\begin{rmk}\rm 
\label{rmk:J1-sphere}
We will prove Theorem~\ref{thm-intro:J1-sphere} for any compact symplectic manifold 
$(T,\omega_T)$ with ${\omega_T(\pi_2(T))=0}$, which excludes bubbling in the given homology class of minimal symplectic area. 
The torus satisfies this assumption since ${\pi_2(T)=0}$.
For more general symplectic manifolds, our line of argument still applies but the polyfold setup would require the inclusion of bubble trees. Moreover, this only proves the result with a possibly nodal $J$-curve. 
\end{rmk}

Classical non-squeezing proofs establish Theorem~\ref{thm-intro:J1-sphere} only for ``generic'' $J$ and require a delicate analysis of linearized Cauchy-Riemann operators to show that their surjectivity can be achieved alongside the condition $\varphi^*J = J_{\textup{st}}$.
Our proof of this more general result demonstrates what proofs of geometric statements look like when they can build on abstract polyfold theory \cite{HWZbook} and an existing polyfold description of the relevant moduli spaces, like for Gromov--Witten spaces given in \cite{HWZgw}. 
When analytical difficulties are outsourced to polyfold theory, the proof of Theorem~\ref{thm-intro:J1-sphere} becomes a transparent geometric argument:

\begin{itemize}
\item
The space $\cM(J)$ of solutions in Theorem~\ref{thm-intro:J1-sphere} modulo reparameterization is the zero set of a Fredholm section $\sigma_J: \cB \to \cE_J$ as described in \S\ref{sec:polyfold-setup}. 

\item
For the standard complex structure $J=J_0$ we show in Lemma~\ref{lem:only-one-J0-curve} that $\cM(J_0)=\{[u_0]\}$ consists of a unique solution $u_0(z)=(z, \pi_{T}(p_0))$. Theorem~\ref{thm:transverslity-at-0} moreover shows that the linearized operator ${\rm D}_{[u_0]}\sigma_{J_0}$ is surjective.

\item
Given any other $J=J_1$, \S\ref{sec:moduli-space} explains that a smooth family $(J_t)$ of compatible almost complex structures connecting it to $J_0$ gives rise to a compact family of moduli spaces $\bigsqcup_{t\in[0,1]}\cM(J_t)$. \S\ref{sec:polyfold-setup} identifies it with the zero set of the Fredholm section $\sigma(t,[u])=\sigma_{J_t}([u])$ over $[0,1]\times\cB$.

\item
If we assume $\cM(J_1)$ to be empty, this implies transversality of $\sigma$ over the boundary $\{0,1\}\times\cB$.
Then \S\ref{sec:applying-regularization-scheme} uses the polyfold regularization scheme explained in \S\ref{subsec:polyfold} to construct a compact $1$-dimensional cobordism $(\sigma+p)^{-1}(0)$ between $\cM(J_0)=\{[u_0]\}$ and $\cM(J_1)=\emptyset$. This proves Theorem~\ref{thm-intro:J1-sphere} by contradiction. 
\end{itemize}

\begin{rmk} \rm
The exact meaning of `Fredholm section' in the first step determines the class in which the last step provides a contradiction. 
Minimal work in the first step would be to cite \cite{ben,HWZgw} for a general description in which $\cB$ is a polyfold (possibly containing nodal curves or curves with nontrivial isotropy). However, this would force us to work with multivalued perturbations $p$ and discuss weighted branched orbifolds in the last step.\footnote{The perturbation $p$ is generally a `multisection functor' resulting in $\tilde\cM^p:=(\sigma+p)^{-1}(0)$ being a weighted branched orbifold. The contradiction to its boundary (in appropriate orientation) $\partial\tilde\cM^p=\cM(J_0)$ being a single point (of trivial isotropy and weight $1$) then arises from Stokes' Theorem 
$0 = \int_{\tilde\cM^p} \rd (1) = \int_{\partial \tilde\cM^p} 1 = 1$ which holds in this context by \cite[\S9.5]{HWZbook}.}
Instead, we show in \S\ref{sec:polyfold-setup} that the specific choice of homology class rules out isotropy, so that we are working with an M-polyfold $\cB$. Then $p$ is single-valued, and the perturbed solution set $(\sigma+p)^{-1}(0)$ is a manifold, contradicting the fact that compact $1$-manifolds have an even number of boundary points. 
\end{rmk}

The above outline uses polyfold theory entirely as a ``black box'' with two features: (a) it describes compactified moduli spaces as zero sets of `Fredholm sections'; (b) such `Fredholm sections' can be perturbed to regularize the moduli space.
To demystify feature (a), \S\ref{sec:polyfold-setup} gives an introduction to the rather technical polyfold description of general Gromov-Witten moduli spaces by going through the details of \cite{HWZgw} for our specific case. Since we consider curves of minimal positive symplectic area, we can exclude nodal curves arising from bubbling so that our ambient space $\cB$ is the space of maps $S^2\to \CP^1\times T$ (of Sobolev class $W^{3,2}$) modulo M\"obius transformations of $S^2$ that fix a marked point. Moreover, we show that this space has trivial isotropy, that is, none of these maps -- holomorphic or otherwise -- is invariant under reparameterization with a nontrivial M\"obius transformation. 
This gives the ambient space $\cB$ the structure of a sc-Hilbert manifold. 
We describe this notion in the following section as part of a brief introduction to polyfold theory. 
This section also demystifies feature (b) by stating the perturbation theorem in the trivial isotropy case that is relevant for the non-squeezing proof.

\smallskip\noindent
\subsubsection*{Acknowledgements.}
This project was initiated by the `Women in Symplectic and Contact Geometry and Topology' workshop \href{https://icerm.brown.edu/topical_workshops/tw19-4-wiscon}{(WiSCon)} at ICERM in 2019. We are deeply grateful to all the organizers and fellow participants for giving us the space and inspiration for this work! Further gratitude to Ben Filippenko and Wolfgang Schmaltz for helpful discussions.\\
This work is supported by Deutsche Forschungsgemeinschaft (DFG) under Germany’s Excellence Strategy EXC-2181/1 - 390900948 (the Heidelberg STRUCTURES Excellence Cluster), CRC/TRR 191 and RTG 2229, and by the US National Science Foundation grants DMS-1708916 and DMS-1807270.

\subsection{Polyfold notions and regularization theorems}
\label{subsec:polyfold}

Polyfold theory was developed by Hofer, Wysocki, and Zehnder (see \cite{HWZbook} and the citations therein) as a general solution to the challenge of regularizing moduli spaces of pseudoholomorphic curves. 
The expectation is that any compact moduli space $\bar\cM$ that is described as zero set of a section can be regularized by appropriate perturbations of the section. For smooth sections in finite dimensions this is proven in e.g.\ \cite[ch.2]{GuillP}.

\begin{thm}[Finite dimensional regularization] \label{thm:findim}
	Let $E \rightarrow B$ be a smooth finite dimensional vector bundle, and let $s: B\rightarrow  E$ be a smooth section such that $s^{-1}(0)$ is compact. Then there exist arbitrarily small, compactly supported, smooth perturbation sections $p:B\rightarrow E$ such that $s+p$ is transverse to the zero section, and hence $(s+p)^{-1}(0)$ is a manifold of dimension $\textup{dim}(B) - \textup{rank}(E)$. 
	
	Moreover, the perturbed zero sets $(s+p')^{-1}(0)$ and $(s+p)^{-1}(0)$ of any two such perturbations $p,p':B\rightarrow E$ are cobordant.
\end{thm}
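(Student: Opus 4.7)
The plan is to prove the transversality statement by a parametric Sard argument applied to a finite-dimensional family of perturbations, then read off the dimension and compactness from standard differential topology.

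Using compactness of $s^{-1}(0)$, I would first choose nested relatively compact open neighborhoods $U_0 \Subset U \Subset B$ of $s^{-1}(0)$ together with a smooth cutoff $\chi: B \to [0,1]$ that equals $1$ on $U_0$ and vanishes outside $U$. Covering $\overline U$ by finitely many trivializing open sets for $E$ and patching local frames via a subordinate partition of unity, one obtains a finite family $e_1, \ldots, e_M$ of smooth sections of $E$, each supported in $U$, whose values span $E_x$ at every $x \in \overline U$. This yields a finite-dimensional parameter space $V = \R^M$ and a universal section $\Phi: B \times V \to E$ defined by $\Phi(x,v) = s(x) + p_v(x)$ with $p_v(x) := \chi(x)\sum_{j=1}^M v_j e_j(x)$.

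The key step is to verify $\Phi \pitchfork 0_E$. At any zero $(x_0, v_0)$, either $\chi(x_0) > 0$, in which case the partial derivative $\partial_v \Phi|_{(x_0,v_0)}\colon V \to E_{x_0}$ is already surjective thanks to the spanning property; or $\chi(x_0) = 0$, which forces $p_{v_0}(x_0) = 0$ and hence $s(x_0) = 0$, so $x_0 \in s^{-1}(0) \subset U_0 \subset \{\chi = 1\}$, contradicting $\chi(x_0) = 0$. Parametric transversality, i.e.\ Sard's theorem applied to the projection of the manifold $\Phi^{-1}(0_E)$ onto $V$, then produces a dense set of parameters $v \in V$ for which $s+p_v \pitchfork 0_E$. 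Choosing $v$ near $0$ yields arbitrarily $C^\infty$-small, compactly supported perturbations, and the implicit function theorem gives $(s+p_v)^{-1}(0)$ the structure of a manifold of dimension $\dim(B) - \textup{rank}(E)$. Compactness of this perturbed zero set is automatic: outside $\textup{supp}(\chi) \subset \overline U$ one has $s + p_v = s$, whose zero set lies inside $U_0$, so $(s+p_v)^{-1}(0) \subset \overline U$.

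For the cobordism statement, the same machinery applies to the section $\tilde s\colon [0,1]\times B\to E$ given by $\tilde s(t,x) = s(x) + (1-t)p(x) + tp'(x) + \eta(t)\, q_w(x)$, where $\eta$ is a bump function supported in $(0,1)$ and $q_w$ ranges over the same finite-dimensional family of compactly supported sections. A generic small parameter $w$ makes $\tilde s$ transverse to $0_E$ without altering the boundary slices at $t = 0,1$, so $\tilde s^{-1}(0_E)$ is the required compact cobordism from $(s+p)^{-1}(0)$ to $(s+p')^{-1}(0)$. The main obstacle, though mild in finite dimensions, is the geometric setup: arranging the cutoff $\chi$ together with the spanning sections $e_j$ so that surjectivity of $\partial_v\Phi$ is forced at every zero while the perturbations remain compactly supported. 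Once this setup is in place, Sard's theorem does all the remaining work, and the corresponding infinite-dimensional version in the polyfold setting is precisely what will be needed in \S\ref{sec:applying-regularization-scheme}.
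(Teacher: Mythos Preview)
The paper does not actually prove this theorem; it only cites \cite[ch.~2]{GuillP} (Guillemin--Pollack). Your argument via a finite-dimensional family of compactly supported sections and the parametric transversality theorem is precisely the standard proof one finds there, so in that sense your approach matches what the paper invokes.

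Two small points to tighten. First, your sections $e_j$ are described as ``supported in $U$'' yet spanning $E_x$ for all $x\in\overline U$; these are in tension at $\partial U$. What you actually need is that the $e_j$ span wherever $\chi>0$, which is easy to arrange since $\{\chi>0\}\Subset U$. Second, in the cobordism step, if your bump $\eta$ is genuinely supported in a compact subset of $(0,1)$ (so vanishes on a neighborhood of the endpoints), then at interior times $t_0$ with $\eta(t_0)=0$ the section reduces to $s+(1-t_0)p+t_0 p'$, which need not be transverse. You want $\eta>0$ on all of $(0,1)$ with $\eta(0)=\eta(1)=0$; then transversality at the endpoints comes from the hypotheses on $p,p'$, and in the interior from the $w$-derivative. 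Also make sure the spanning sections $q_w$ are built over a neighborhood large enough to contain the supports of both $p$ and $p'$. With these cosmetic fixes the proof is complete.
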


A direct generalization of this theorem applies when $B$ has boundary giving rise to perturbed zero sets with boundary $\partial (s+p)^{-1}(0) = (s+p)^{-1}(0) \cap \partial B$. 
Unfortunately, moduli spaces of pseudoholomorphic curves generally do not have natural descriptions to which this theorem applies. There are several reasons: 

\begin{enumerate}[(1)]
    \item 
    The space of pseudoholomorphic maps $u:S\to M$, with fixed domain $S$ and target $M$, is the zero set of a Fredholm section of a Banach bundle. But due to `bubbling and breaking' phenomena, its compactification contains maps defined on different domains\footnote{Some `SFT neck stretching' moduli spaces also vary the target space $M$.}. To put a topology on the resulting set of maps from varying domains, one uses `pregluing constructions' to transfer maps from `nodal or broken' domains to nearby smooth domains. However, these constructions do not provide homeomorphisms to open subsets of Banach spaces (see e.g.\ \cite[p.10]{usersguide}), so they do not yield local charts for any classical generalization of (topological or smooth) manifolds.
    \item 
    The moduli spaces typically arise as quotients of spaces of pseudoholomorphic maps by groups of repa\-ra\-meterizations of their domains. If these groups act with nontrivial isotropy, then we expect an orbifold structure on any ambient space that contains the moduli space. 
    \item
    If we wish for a differentiable structure on this ambient space, then we need to ensure that repa\-ra\-meterizations act differentiably. However, this is not the case for the classical Banach manifold structures on spaces of maps. As a result, while there are local charts for maps-modulo-reparameterization with fixed domain (constructed as local slices to the group action), the transition maps between different charts are nowhere differentiable.
 \end{enumerate}  
    
The classical regularization constructions for moduli spaces of pseudoholomorphic curves, such as \cite{McDuffSalamon2}, work around these problems by finding geometric perturbations that achieve transversality for spaces of maps with fixed domain. Once they achieve finite dimensional spaces of perturbed solutions, this requires further steps to take quotients and compactify. Whether or not there are sufficiently many geometric perturbations that are both equivariant and compatible with gluing constructions depends on the particular geometric setting. The classical proof of the non-squeezing theorem makes use of the geometric setting of `least energy' to rule out (1) nodal curves as well as (2) isotropy (due to multiple covers), so that only (3) the differentiability challenge is present. The latter is resolved by finding a regular choice of $J$ with $\phi^*J=J_{\text{st}}$. This requires showing that holomorphic maps in the desired homology class all pass injectively through a part of $(\CP^1\times T) \setminus \phi(B_R)$, where we can freely vary the almost complex structure.
While this approach yields a rigorous proof, it requires -- even in the simplest case -- sophisticated analysis combined with specific geometric properties of the holomorphic curves.

Polyfold theory, on the other hand, uses the above challenges as a guide to generalize the notion of a section in Theorem~\ref{thm:findim} so that the abstract perturbation theory applies to the desired moduli spaces, and no further case-specific analysis or geometric properties are needed. The main features are as follows:

\begin{enumerate}[(1)]
    \item 
    The `pregluing constructions' generalize open subsets of Banach spaces to images of retractions as new local models. For example, the neighbourhood of a nodal sphere is described by an open subset of the model space $\im\,\rho=\bigcup_{a\in\C} \{a\}\times \pi_a(V)$ that arises from a family of projections $\rho:\C\times V\to \C\times V, (a,v)\mapsto (a,\pi_a v)$ on a Banach space $V$, which are
    centered at $\pi_0=\id$ and $\im\,\pi_a \subsetneq V$ for $a\neq 0$.
    For further details see e.g.\ \cite[2.3]{usersguide}. 
    \item 
    The orbifold structure is captured by formulating the notion of an atlas as a groupoid. This provides a nonsingular structure as in (1) on the object space (where e.g.\ perturbations are constructed), with isotropy appearing in the morphisms (e.g.\ forcing the perturbations to become multivalued). 
    \item
    Differentiability of transition maps between different local charts (resp.\ the structure maps in the groupoid) is achieved by defining a new notion of scale-differen\-tiability for maps between Banach spaces equipped with an additional scale structure. These notions are obtained by formalizing the differentiability features of reparameterization maps between Sobolev spaces into a notion that satisfies a chain rule
    (see \cite[2.2]{usersguide}).
 \end{enumerate}  

Restricted to finite dimensional Banach spaces, the retractions in (1) are trivial, and (3) coincides with classical differentiability, so (2) reproduces the notion of an orbifold being represented by a proper \'etale groupoid 
(see e.g.\ \cite{Moe} for an introduction). 
In infinite dimensions, these generalizations yield the following new notions. Here and throughout, we restrict the notions of \cite{HWZbook} to metrizable topologies\footnote{Metrizability of polyfolds is guaranteed by paracompactness and \cite[Thm.7.2]{HWZbook}.} and sc-Hilbert spaces\footnote{Hilbert spaces equipped with scale structures automatically admit scale-smooth bump functions by \cite[\S5.5]{HWZbook}. These are crucial for the existence of transverse perturbations.}. 
In each case, sc-compatibility means that the transition maps are scale-smooth.

\begin{itemize}
    \item A \textbf{sc-Hilbert manifold} is a metric space equipped with sc-compatible local homeomorphisms to open subsets of sc-Hilbert spaces.
    \item An \textbf{M-polyfold} is a metric space equipped with sc-compatible local homeomorphisms to open subsets of scale-smooth retracts in sc-Hilbert spaces. 
    \item A \textbf{polyfold} is a metric space equipped with sc-compatible local homeomorphisms to finite quotients of open subsets of scale-smooth retracts in sc-Hilbert spaces. These domains with group actions and lifts of transition maps form a proper groupoid whose object and morphism spaces are M-polyfolds, and whose structure maps are local sc-diffeomorphisms.
    \item A sc-Hilbert manifold/M-polyfold/polyfold $\cB$ \textbf{with boundary}\footnote{For an introduction to the notion of corners in polyfold theory see \cite[\S5.3]{usersguide}. Note, however, that corners usually appear together with coherence conditions -- in which perturbations on boundary strata need to coincide with the perturbations of other moduli spaces which are identified with these boundary strata. The construction of coherent perturbations then requires not just a workable notion of boundary and corner strata, but an ordering of the moduli spaces that prevents circular coherence conditions when constructing perturbations.  
    } 
    is a space with compatible charts as before but allowing for open subsets of $[0,\infty)\times H$, where $H$ is a sc-Hilbert space. Its boundary $\partial\cB$ is the union of all preimages of $\{0\}\times H$. 
\end{itemize}

\begin{rmk} \rm 
\label{rmk:finite-dim-mfd-sc-Hilbert-mfd}
Every manifold $M$ (with boundary) is a sc-Hilbert manifold: It is locally homeomorphic to open subsets of $\R^n$ (or $[0,\infty)\times\R^{n-1}$). Here each $\R^k$ is a sc-Hilbert space with trivial scale structure; see \cite[Ex.4.1.8]{usersguide}.
\end{rmk}

With this language in place, the application of polyfold theory to a given moduli space $\bar\cM$ -- for example the moduli space in Theorem~\ref{thm-intro:J1-sphere} -- has two steps:  
\begin{enumerate}[(a)]
\item
Describe $\bar\cM\cong\sigma^{-1}(0)$ as the zero set of a section $\sigma:\cB\to\cE$ over a polyfold or M-polyfold $\cB$ with $\cE$ a `strong bundle' and $\sigma$ `scale-Fredholm' as defined in \cite{usersguide,HWZbook}.
Intuitively, $\cB$ is the same space of possibly-nodal-maps-modulo-reparameterization as $\bar\cM$ but allows for general maps in some Sobolev space, with the pseudoholomorphic condition encoded in the section $\sigma([u])=[\bar\partial_J u]$ of an appropriate bundle $\cE$.
This step is best achieved by combining existing polyfold descriptions such as \cite{HWZgw} with general construction principles such as restrictions \cite{ben}, pullbacks \cite{wolfgang}, quotients \cite{zhengyi}.
For our example, we describe this in detail in \S\ref{sec:polyfold-setup}. 

\item
Apply the corresponding regularization theorem to draw the desired conclusions. For general polyfolds (with boundary) this is \cite[Thm.15.4 (15.5)]{HWZbook} and involves multivalued perturbations. In our example, the M-polyfold versions in Theorem~\ref{thm:polyfold-regularization-scheme} and Remark~\ref{rmk:perturbation-supported-inside-interval} below suffice. 
\end{enumerate}

The following generalization of the finite dimensional regularization Theorem~\ref{thm:findim} is proven in \cite[Theorems 3.4, 5.5, 5.6]{HWZbook}. It resolves the challenges (1) and (3) above, hence covering the case required for the non-squeezing proof. 

\begin{thm}[M-polyfold regularization]
\label{thm:polyfold-regularization-scheme}
	Let $\EE\rightarrow\BB$ be a strong M-polyfold bundle, and let $\sigma:\BB\rightarrow\EE$ be a scale-smooth Fredholm section such that $\sigma^{-1}(0)$ is compact. Then there exists a class of perturbation sections $p:\BB\rightarrow\EE$ supported near $\sigma^{-1}(0)$ such that $(\sigma+p)^{-1}(0)$ carries the structure of a smooth compact manifold of dimension $\textup{index}(\sigma)$ with boundary $\partial (\sigma+p)^{-1}(0)= (\sigma+p)^{-1}(0) \cap \partial\BB$. 
	
	Moreover, for any other such perturbation $p':\BB\rightarrow\EE$, there exists a smooth cobordism between $(\sigma+p')^{-1}(0)$ and $(\sigma+p)^{-1}(0)$.
\end{thm}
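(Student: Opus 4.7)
The plan is to reduce the statement to a local problem on sc-Hilbert retracts and then globalize using compactness of $\sigma^{-1}(0)$. First I would work in a local M-polyfold chart near a zero $b_0\in\sigma^{-1}(0)$: the strong bundle $\EE$ trivializes near $b_0$, and $\sigma$ becomes a scale-smooth section of a trivial bundle over an open neighbourhood in a retract $\mathcal O=\im\,\rho$ for some sc-smooth retraction $\rho$ on an open subset of an sc-Hilbert space. The linearization $\rD\sigma(b_0)$ is sc-Fredholm, so its cokernel $C$ is finite-dimensional. Choosing a basis $e_1,\dots,e_k$ of $C$ and sc-smooth bump functions $\beta_1,\dots,\beta_k$ supported in a small neighbourhood of $b_0$ and equal to $1$ at $b_0$ (these exist because sc-Hilbert spaces admit sc-smooth bump functions), the finite-parameter local perturbation $q_{b_0}(\lambda,b)=\sum_i \lambda_i \beta_i(b) e_i$ renders $\sigma+q_{b_0}(\lambda,\cdot)$ transverse at $b_0$ for every sufficiently small $\lambda\in\R^k$, by construction of $C$.

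Next I would globalize. Since $\sigma^{-1}(0)$ is compact, finitely many such charts $U_{\alpha}$ cover it, and the combined perturbation $p_\Lambda=\sum_\alpha q_\alpha(\lambda_\alpha,\cdot)$ with total parameter $\Lambda=(\lambda_\alpha)$ is scale-smooth and supported in a fixed neighbourhood of $\sigma^{-1}(0)$. The compact support, together with continuity of $\sigma$ and compactness of the original zero set, ensures that for small $\Lambda$ the perturbed zero set stays inside $\bigcup_\alpha U_\alpha$ and is itself compact. Applying the Sard--Smale theorem to the finite-dimensional parametrized evaluation map then selects a generic $\Lambda$ for which $\sigma+p_\Lambda$ is transverse at every point of $(\sigma+p_\Lambda)^{-1}(0)$.

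The main obstacle is the step from pointwise transversality to a smooth manifold structure on the perturbed zero set: the tangent space at a point of the retract $\mathcal O$ is only the image of the linearized retraction, not the ambient Banach space, so the classical implicit function theorem does not apply directly. This requires the sc-smooth implicit function theorem for Fredholm sections on retracts, which in turn relies on the chain rule for sc-smooth maps, the sc-splitting of kernel and cokernel, and the observation that a scale-smooth map whose linearization at a point is invertible on tangent spaces admits local scale-smooth inverses. Taking this as the crucial analytical input, one concludes that near each transverse zero the set $(\sigma+p_\Lambda)^{-1}(0)$ is a finite-dimensional sc-smooth manifold of dimension $\textup{index}(\sigma)$, and these local manifold charts patch globally because $\sigma+p_\Lambda$ is a globally defined section. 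The boundary identity $\partial(\sigma+p_\Lambda)^{-1}(0)=(\sigma+p_\Lambda)^{-1}(0)\cap\partial\BB$ then follows from the corresponding local statement in charts modelled on $[0,\infty)\times H$.

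For the cobordism assertion, given two perturbations $p$ and $p'$ yielding transverse zero sets, I would consider the parametric section $\Sigma(t,b)=\sigma(b)+(1-t)p(b)+tp'(b)$ on $[0,1]\times\BB$, which is again scale-smooth Fredholm with compact zero set and index $\textup{index}(\sigma)+1$. Applying the regularization theorem itself to this parametric section, with an additional perturbation $q(t,b)$ chosen so that $q(0,\cdot)=q(1,\cdot)=0$, produces a compact smooth manifold of dimension $\textup{index}(\sigma)+1$ with boundary containing $\{0\}\times(\sigma+p)^{-1}(0)$ and $\{1\}\times(\sigma+p')^{-1}(0)$; these are the required two ends of the cobordism. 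The vanishing of $q$ at the endpoints is essential, as it prevents the boundary of the parametric zero set from disturbing the original perturbed moduli spaces.
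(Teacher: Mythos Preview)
The paper does not prove this theorem; it is quoted as a black-box result from \cite[Theorems~3.4, 5.5, 5.6]{HWZbook}. So there is no proof in the paper to compare against. Your sketch is, in broad strokes, the correct outline of how the HWZ proof proceeds, but a few of your steps hide substantial content that is not automatic from what you have written.

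First, the perturbations $p$ must be $\text{sc}^+$-sections, not merely sc-smooth sections: they need to take values one regularity level higher than the base so that $\sigma+p$ remains sc-Fredholm. Your local formula $\sum_i\lambda_i\beta_i(b)e_i$ will do this provided the cokernel vectors $e_i$ are chosen at the $\infty$-level of the fibre, but you should say so explicitly, since otherwise the perturbed section need not satisfy the regularizing property and the implicit function theorem fails.

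Second, and more seriously, your identification of ``the main obstacle'' underestimates it. The sc-Fredholm property in \cite{HWZbook} is \emph{not} the statement that linearizations are Fredholm operators; it is a nonlinear condition requiring that, after subtraction of an $\text{sc}^+$-section, the germ of $\sigma$ be conjugate to a \emph{basic germ} (identity plus sc-contraction after splitting off finite-dimensional pieces). The sc-implicit function theorem you invoke is proved precisely from this basic-germ normal form, and there is no known route to it from surjectivity of the linearization alone in the sc-setting. So ``taking this as the crucial analytical input'' is fine as a citation, but your phrasing suggests it follows from Fredholmness of $\rD\sigma(b_0)$, which it does not.

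Third, compactness of $(\sigma+p_\Lambda)^{-1}(0)$ for small $\Lambda$ is not just a matter of support and continuity: in the sc-setting one controls this via an auxiliary norm and the notion of a ``controlling pair'', which guarantees that zeros cannot escape to higher scales or to the boundary of the charts. Your argument as written would be circular without this.

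Your cobordism paragraph is essentially correct, including the key point that the additional perturbation must vanish at $t=0,1$; this is exactly the content of Remark~\ref{rmk:perturbation-supported-inside-interval} in the paper.
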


\begin{rmk} \rm
\label{rmk:perturbation-supported-inside-interval}
Suppose that the section $\sigma$ in Theorem~\ref{thm:polyfold-regularization-scheme} restricts to a transverse section on the boundary, i.e.\ $\sigma|_{\partial\BB}: \partial\BB \to \EE|_{\partial\BB}$ has surjective linearizations at all points in $\sigma^{-1}(0)\cap \partial\BB$. 
Then we can choose the perturbation section $p$ to be supported in the interior, i.e.\ $p|_{\partial\BB}\equiv 0$. As a result, $(\sigma+p)^{-1}(0)$ has boundary $\partial (\sigma+p)^{-1}(0)= (\sigma+p)^{-1}(0) \cap \partial\BB = (\sigma|_{\partial \BB})^{-1}(0)$. 

This can be proven by following the proof of the regularization theorem in \cite{HWZbook}. It is explicitly stated and proven in the last part of \cite[Thm.A9]{pss}. In our case, the map $e:\XX\to\emptyset$ and submanifolds $C_i=\emptyset$ are trivial, and the polyfold $\XX=\BB$ has trivial isotropy. So, the `multisection' $\lambda$ will be represented by a perturbation section $p:\cB\to\cE$.
Our transversality assumption on the boundary means that the `trivial multisection $\lambda^\delta$ representing' $p|_{\partial\BB}\equiv 0$ yields an `admissible ... multisection in general position to the perturbed zero set in the boundary' $\{x\in\partial\BB \,|\, \sigma(x) = 0\}$. The conclusion is the existence of a perturbation section $p$ with $p|_{\partial\BB}\equiv 0$ so that $\sigma+p$ is `admissible' and in `general position', as required for the conclusions of Theorem~\ref{thm:polyfold-regularization-scheme}.
\end{rmk}

For readers interested in regularization theorems that resolve the challenge (2) of nontrivial isotropy, we recommend the brief overview \cite[Rmk.2.1.7]{usersguide} and the in-depth discussion of the finite dimensional case \cite{dusa-groupoids} before diving into the technicalities of \cite{HWZbook} or their summary in \cite{wolfgang}. 
Despite a lot of notational overhead, the general polyfold regularization theorem \cite[Thm.15.4]{HWZbook} can be understood as a direct combination of the regularization theorems for sections over M-polyfolds and finite dimensional orbifolds.

\section{Outline of the Proof}
\label{sec:outline-of-the-proof}

Let us consider a symplectic embedding $\varphi: B_R \hookrightarrow Z_r$ for radii $R>0$ and $r>0$. We will prove $R\leq r$ by showing that $R'\leq r+\varepsilon$ for any choice of $0<R'<R$ and $\varepsilon>0$. These choices are needed for constructions in the following section.

\subsection{Compactifying the target space}
\label{sec:compatifying-target-space}

The proof uses the theory of pseudoholomorphic curves. Since the analytic setup is simpler for closed manifolds, we prefer to work with a compact target space. For that purpose we fix an $\varepsilon >0$. Then we can understand $\varphi$ as an embedding
$$\varphi: B_R \hookrightarrow \mathring{Z}_{r+\varepsilon}$$
into the slightly larger open cylinder $\mathring{Z}_{r+\varepsilon} = \mathring{B}^2_{r+\varepsilon} \times \R^{2n-2}$.
The first factor of this cylinder compactifies to a $\mathbb{CP}^1$. The standard symplectic form on $\mathring{B}^2_{r+\varepsilon}$ descends to a symplectic (and thus, area) form $\omega_{\mathbb{CP}^1}$ such that $\mathbb{CP}^1$ has area
\begin{align*}
   \int_{\mathbb{CP}^1} \omega_{\mathbb{CP}^1} = \pi(r+\varepsilon)^2.
\end{align*}
So we may view $\varphi$ as a symplectic embedding 
\begin{align*}
    \varphi \colon ( B_R , \omega_{\text{st}} ) \hookrightarrow ( \mathbb{CP}^1 \times \mathbb{R}^{2n-2} , \omega_{\mathbb{CP}^1} \oplus \omega_{\text{st}} ).
\end{align*}
Now, we want to compactify the second factor of the cylinder as well. Remember that $B_R$ is the closed ball. So, the projection to $\R^{2n-2}$ of its image under the continuous map $\varphi$ is compact.
This means that we can choose $N > 0$ sufficiently large such that 
$\varphi ( B_{R} ) \subset \mathbb{CP}^1 \times (-\frac 12 N , \frac 12 N )^{2n-2}$.
Then we can view $\varphi(B_R)$ as a subset of the $(2n-2)$-dimensional torus $T:= \R^{2n-2}/ N\Z^{2n-2}$ with standard symplectic form $\omega_T$ induced from $\omega_{\text{st}}$ on $\R^{2n-2}$. This means we get a symlplectic embedding (again denoted by) $\varphi$,
$$ 
\varphi: (B_{R}, \omega_{\text{st}}) \hookrightarrow (\mathbb{CP}^1 \times T, \omega:= \omega_{\mathbb{CP}^1} \oplus \omega_{T} ).
$$
The proof now proceeds by studying pseudoholomorphic curves in $\CP^1\times T$.
Here we wish to work with an almost complex structure $J_1$ on $\mathbb{CP}^1 \times T$ so that the pullback of $J_1$-holomorphic maps under the embedding $\phi$ yields pseudoholomorphic maps to $B_R$ with respect to the standard complex structure $J_{\textup{st}}$ on $B_R\subset\R^{2n}$. 
This is crucial for the last step of the non-squeezing proof in \S\ref{sec:using-monotonicity} which uses monotonicity with respect to the standard metric $\omega_{\text{st}}(\cdot,J_{\textup{st}} \cdot)$ on $B_R$. 
To do this rigorously, we need to shrink the ball slightly to interpolate between almost complex structures. 

\begin{lem}  \label{lem:J1}
For any $0<R'<R$ there is an almost complex structure $J_1$ on $\mathbb{CP}^1 \times T$ that is compatible with $\omega= \omega_{\mathbb{CP}^1} \oplus \omega_{T}$ and satisfies $\varphi^*J_1|_{B_{R'}} = J_{\textup{st}}$.
\end{lem}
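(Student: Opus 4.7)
The plan is to pushforward $J_{\textup{st}}$ to an open neighborhood of $\varphi(\bar B_{R'})$ in $\CP^1 \times T$, interpolate with a reference global $\omega$-compatible almost complex structure at the level of Riemannian metrics (where convex combinations make sense) via a cutoff, and extract an $\omega$-compatible $J_1$ from the resulting metric using the standard polar-decomposition retraction from metrics to $\omega$-compatible almost complex structures. The only non-elementary ingredient is this retraction, which I would cite from \cite{McDuffSalamon2}.

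Concretely, I would first pick $R''$ with $R' < R'' < R$, so that $U := \varphi(\textup{int}\, B_{R''})$ is an open neighborhood of the compact set $\varphi(\bar B_{R'})$ in $\CP^1 \times T$ (here using that $\varphi$ restricted to the open ball $\textup{int}\, B_R$ is a smooth symplectic embedding, hence a diffeomorphism onto its open image). Then set $J_U := \varphi_* J_{\textup{st}}$ on $U$, which is $\omega$-compatible because $\varphi$ is symplectic, and let $g_U := \omega(\cdot, J_U \cdot)$ be the induced Riemannian metric on $U$. Next, pick any global $\omega$-compatible almost complex structure $J_0$ on $\CP^1 \times T$ (e.g.\ the product of the standard complex structures on the two factors), and set $g_0 := \omega(\cdot, J_0 \cdot)$. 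Choose a smooth cutoff $\chi : \CP^1 \times T \to [0,1]$ with $\chi \equiv 1$ on a neighborhood of $\varphi(\bar B_{R'})$ and $\textup{supp}(\chi) \subset U$, and define the global Riemannian metric
\[
g := \chi\, g_U + (1 - \chi)\, g_0
\]
on $\CP^1 \times T$ (extending $\chi\, g_U$ by zero outside $U$). As a convex combination of positive-definite symmetric bilinear forms with coefficients in $[0,1]$, $g$ is indeed a Riemannian metric.

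Finally, I would apply the canonical retraction $r$ from Riemannian metrics on $(\CP^1 \times T, \omega)$ to the space of $\omega$-compatible almost complex structures, constructed via polar decomposition (see e.g.\ \cite{McDuffSalamon2}), and set $J_1 := r(g)$. The key property of $r$ is that $r\bigl(\omega(\cdot, J \cdot)\bigr) = J$ for any $\omega$-compatible $J$. Since $g = g_U = \omega(\cdot, J_U \cdot)$ on the open neighborhood of $\varphi(\bar B_{R'})$ where $\chi \equiv 1$, we obtain $J_1 = J_U = \varphi_* J_{\textup{st}}$ there, and pulling back by $\varphi$ yields $\varphi^* J_1|_{B_{R'}} = J_{\textup{st}}$, as required. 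No substantial obstacle appears in this plan: everything reduces to elementary linear algebra and the existence of smooth cutoffs, modulo the standard polar-decomposition retraction from symplectic linear algebra.
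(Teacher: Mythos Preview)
Your proposal is correct and follows essentially the same approach as the paper's own proof: push forward $J_{\textup{st}}$ via $\varphi$, interpolate at the level of the associated Riemannian metrics using a cutoff (the paper phrases this as a partition of unity), and then recover an $\omega$-compatible almost complex structure via the standard polar-decomposition retraction, citing McDuff--Salamon for the key property that this retraction returns $J$ when fed $\omega(\cdot,J\cdot)$. The only cosmetic difference is that the paper works directly with the cover by $\varphi(\mathring B_R)$ and the complement of $\varphi(B_{R'})$ rather than introducing an intermediate radius $R''$.
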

\begin{proof}
The basic idea is to define $J_1 = \varphi_*J_{\textup{st}}$ on the image of $\varphi$ and to set $J_1 = J_0$ outside a neighbourhood of the image. But as a weighted sum of two almost complex structures will in general not be an almost complex structure, we can not directly interpolate between these.  Instead, we interpolate between the corresponding Riemannian metrics $g_0:=\omega (\cdot,J_0\cdot)$ on $\CP^1\times T$ and $g_{\varphi_*J_{\textup{st}}}:= \omega (\cdot,\varphi_*J_{\textup{st}} \cdot)$ on $\varphi(B_R)$. To do this, we choose a partition of unity $\psi_0+\psi_1=1$ subordinate to the cover $\CP^1\times T = U_0 \cup U_1$ where $U_0:= \CP^1\times T \setminus \varphi(B_{R'})$ and $U_1:=\varphi(\mathring{B}_R)$. (These are open subsets because $\varphi$, being an embedding, maps open/closed subsets to open/closed subsets.) Since $\psi_i$ is supported in $U_i$ and $\varphi(B_R')\cap U_0 = \emptyset$, we have $\psi_1|_{\varphi(B_R')}\equiv 1$ and thus obtain a metric $g_1$ with $g_1|_{\varphi(B_R')}=g_{\varphi_*J_{\textup{st}}}$ by interpolating with this partition of unity,
$$
g_1 \,:=\; \psi_0 \cdot g_0 \;+\; \psi_1\cdot g_{\varphi_*J_{\textup{st}}}. 
$$
Finally, a pair of a Riemannian metric $g$ and a symplectic form $\omega$ determine an almost complex structure $J$ compatible with $\omega$ and if $g$ was of the form $g=\omega(\cdot,J\cdot)$, then the determined almost complex structure is in fact the same $J$, see  \cite[Prop. 2.50 (ii)]{McDuffSalamon1}.
Thus, $g_1$ and $\omega$ together determine an almost complex structure $J_1$ that has the required properties.
\end{proof}

There are two more properties of pullbacks $C_0:=\phi^{-1}(C_1)\subset B_{R'}$ of $J_1$-ho\-lo\-mor\-phic curves $C_1 \subset \CP^1\times T$ that are required for their to prove the non-squeezing result $R'\leq r+\eps$. First, we need $C_1$ to pass through the point $p_0:=\varphi(0)\in \mathbb{CP}^1 \times T$, so that $C_0\subset B_{R'}$ passes through the center $0$ of the ball. Second, we wish to bound the symplectic area $\int_{C_0} \omega_{\text{st}} \leq \int_{C_1} \omega \leq \pi (r+\eps)^2$. The latter is achieved by prescribing the homology class $[C_1]=[\CP^1\times\{\pt\}]$ since this determines the integral of the closed symplectic form $\omega$, 
$$
\int_{C_1} \omega \;=\; \int_{\CP^1\times\{\pt\}} \omega_{\mathbb{CP}^1} \oplus \omega_{T}
\;=\; \int_{\CP^1} \omega_{\mathbb{CP}^1}  +  \int_{\{\pt\}} \omega_{T} \;=\; \pi (r+\eps)^2 + 0. 
$$
Ultimately, we will find a not necessarily embedded curve $C_1=u(S^2)$ by studying $J_1$-holomorphic maps $u:(S^2,i) \rightarrow (\mathbb{CP}^1 \times T^{2n-2}, J_1)$ with a point constraint $u(z_0)=p_0$ in the homology class $[u]=[\CP^1\times\{\pt\}]$.
Their existence is stated, for general $J$ on a product manifold $\mathbb{CP}^1 \times T$, 
in Theorem~\ref{thm-intro:J1-sphere}. The proof starts with the existence of a unique $J_0$-holomorphic map for a specific $J_0$ described in \S\ref{sec:unique-J0-curve}, and is completed in \S\ref{sec:applying-regularization-scheme} based on the polyfold constructions in \S\ref{sec:polyfold-setup}.

\subsection[The unique J0-holomorphic curve]{The unique \texorpdfstring{$J_0$}{J0}-holomorphic curve}
\label{sec:unique-J0-curve}

This section begins our study of pseudoholomorphic curves in $\CP^1\times T$ by considering a split almost complex structure $J_0= i \oplus J_T$ on $\CP^1 \times T$.
Here $(T,\omega_T)$ can be any compact symplectic manifold, though its topology will be restriced in following sections. 
For the nonsqueezing proof, there is a standard complex structure $J_T$ on the torus $T= \R^{2n-2}/ N\Z^{2n-2}$; in general, we choose any $\omega_T$-compatible almost complex structure $J_T$ on $T$. This ensures that $J_0$ is compatible with $\omega=\omega_{\mathbb{CP}^1} \oplus \omega_{T}$, meaning $g_0=\omega (\cdot, J_0\cdot)$ is a Riemannian metric. 
In the following, we view $S^2$ as a Riemann surface by identifying it with $\CP^1$ and using the standard complex structure $i$ on $\CP^1$. Then we find a $J_0$-holomorphic sphere passing through any given point $p_0=(z_0,m_0)\in \mathbb{CP}^1 \times T$ by combining the identification $S^2\cong\CP^1$ with a constant map to $T$,
$$ 
u_0 : (S^2,i) \rightarrow (\mathbb{CP}^1 \times T, J_0) , \quad z \mapsto (z, m_0 )  .
$$
The symplectic area of this sphere -- a quantity that only depends on the homology class -- is
\begin{align}
    \label{eq:area-of-u0}
    E(u_0) = \int_{S^2} u_0^*\omega = \int_{\CP^1} \omega_{\mathbb{CP}^1} = \pi (r+\varepsilon)^2 .
\end{align}
The next lemma shows that, up to reparameterization, this is the only sphere with these properties in its homology class.

\begin{lem}
\label{lem:only-one-J0-curve}
Assume $u: (S^2,i) \rightarrow (\mathbb{CP}^1 \times T, J_0)$ is $J_0$-holomorphic, passes through $p_0$, and represents the class $[\mathbb{CP}^1 \times \{\textup{pt}\}]\in H_2(\mathbb{CP}^1 \times T;\mathbb{Z})$. Then, there is a biholomorphism $\psi: (S^2,i) \rightarrow (S^2,i)$ such that $u \circ \psi = u_0$.
\begin{proof}
Since $u$ is $(J_0=i\oplus J_T)$-holomorphic, its composition with projection to each factor yields holomorphic maps $f:= \text{pr}_{\mathbb{CP}^1} \circ u : S^2 \rightarrow \mathbb{CP}^1 \cong S^2$ and $g:=\text{pr}_{T} \circ u : S^2 \rightarrow T$.
Moreover, the homology condition specifies $g_*[S^2]=[\{\textup{pt}\}]=0\in H_2(T)$ and $f_*[S^2]=[S^2]\in H_2(S^2)$. 
The energy identity $\int g^*\omega = \int \frac 12 |\rd g|^2$ (see \cite[Lemma~2.2.1]{McDuffSalamon2}) then implies $\int |\rd g|^2 =0$. So, $g$ must be constant. Since $u$ passes through $p_0=(z_0,m_0)$, this means $g(z) = \text{pr}_{T} ( u(z) ) =m_0$. 
Moreover, $f_*[S^2]=[S^2]\in H_2(S^2)$ implies that $f$ is neither constant nor a multiple cover of another holomorphic map. Thus, $\psi := f^{-1}$ exists and is a biholomorphism of $S^2$. 
Then we obtain the claim as
$( u\circ \psi )(z) = \bigl(f (f^{-1}(z)) \,,\, g(\psi(z) \bigr)  = (z , m_0 ) = u_0$ for all $z\in S^2$. 
\end{proof}
\end{lem}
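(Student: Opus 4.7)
The plan is to exploit the split structure $J_0 = i\oplus J_T$: since this almost complex structure respects the product decomposition of $\CP^1\times T$, the two component projections of $u$ are individually pseudoholomorphic and can be analyzed separately. Concretely, I would set $f := \pr_{\CP^1}\circ u : S^2 \to \CP^1$ and $g := \pr_T\circ u : S^2\to T$, so that $f$ is $(i,i)$-holomorphic and $g$ is $(i,J_T)$-holomorphic. Pushing the homology hypothesis $u_*[S^2] = [\CP^1\times\{\pt\}]$ forward through the two projections then gives $f_*[S^2] = [\CP^1]$ in $H_2(\CP^1;\Z)\cong\Z$ and $g_*[S^2] = 0$ in $H_2(T;\Z)$.

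The main step is to force $g$ to be constant. Since $J_T$ is compatible with $\omega_T$, the standard energy-area identity $\tfrac12\int_{S^2}|\rd g|^2 = \int_{S^2} g^*\omega_T$ holds for the $J_T$-holomorphic map $g$. Because $\omega_T$ is closed, the right-hand side depends only on the homology class $g_*[S^2]$, which vanishes. Hence $\rd g\equiv 0$ and $g$ is constant. The hypothesis that $u$ passes through $p_0 = (z_0,m_0)$ pins down this constant as $g\equiv m_0$.

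Once $g$ is known to be constant, $f$ is a degree-one holomorphic self-map of the Riemann sphere. By the classification of such maps (equivalently, by Riemann-Hurwitz, since degree one forces both nonconstancy and absence of branching), every such $f$ is a M\"obius transformation and hence a biholomorphism. Setting $\psi := f^{-1}$, a direct computation gives $u\circ\psi(z) = \bigl(f(f^{-1}(z)),\, m_0\bigr) = (z,m_0) = u_0(z)$ for all $z\in S^2$, which is the desired conclusion.

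The only nonobvious ingredient is the energy-area identity used to rule out non-constant $g$; this is precisely where the \emph{minimal symplectic area} nature of the chosen homology class enters, since $u_*[S^2]$ contributes no $\omega_T$-area and therefore leaves nothing for a holomorphic $g$ to carry. Everything else is bookkeeping with the product structure, so I do not expect any real obstacle beyond citing the energy identity from a standard reference such as \cite{McDuffSalamon2}.
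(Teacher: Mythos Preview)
Your proposal is correct and follows essentially the same approach as the paper's own proof: split $u$ into its two projections, use the energy identity to force the $T$-component constant, and observe that the degree-one holomorphic $\CP^1$-component is a biholomorphism whose inverse serves as $\psi$. The only cosmetic difference is that you invoke Riemann--Hurwitz where the paper simply notes that $f$ is neither constant nor a multiple cover.
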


\begin{rmk} \rm 
It is part of both the classical and our polyfold proof to show that the curve $u_0$ is transversely cut out of the space of all curves in its homology class passing through $p_0$.
This statement will be made precise in Theorem~\ref{thm:transverslity-at-0}.
\end{rmk}

\subsection{Using the monotonicity lemma}
\label{sec:using-monotonicity}

This section finishes the proof of the nonsqueezing Theorem~\ref{thm:nonsqueezing} assuming that we have found a $J_1$-holomorphic map $u_1:S^2\to \CP^1\times T$ with the same properties as the unique $J_0$-holomorphic curve in Lemma~\ref{lem:only-one-J0-curve}, except that $J_1$ is an almost complex structure as in Lemma~\ref{lem:J1} with $\phi^*J_1=J_{\text{st}}$. The existence of $u_1$ follows from Theorem~\ref{thm-intro:J1-sphere}, proven in \S\ref{sec:applying-regularization-scheme}. 
Given such $u_1:S^2\to \CP^1\times T$, we obtain a $J_{\text{st}}$-holomorphic map
on $\tilde S:=u_1^{-1}(\varphi(\mathring{B}_{R'}))\subset S^2$, 
$$
v:=\varphi^{-1}\circ u_1 :  \; \tilde S \longrightarrow \R^{2n}. 
$$
This map passes through the center $\varphi^{-1}(p_0)=0$ of the ball $B_{R'}$ and has area $\int v^*\omega_{\text{st} }\leq \int u_1^*\omega =\pi (r+\varepsilon)^2$, so comparison with the minimal surface through the center of the ball -- the disk of area $\pi (R')^2$ -- will yield $R' \leq r+\eps$. 
To deduce this inequality directly from the monotonicity lemma for minimal surfaces, we would have to show that $v$ is an embedding. Instead, we use a monotonicity lemma for holomorphic maps to a complex Hilbert space. It can be found as Lemma~\ref{lem:holmonotone} in the appendix, together with a detailed proof.

\begin{proof}[Proof of Theorem~\ref{thm:nonsqueezing}] 
We will apply Lemma~\ref{lem:holmonotone} to the map $v:\tilde S \to \R^{2n}$ with $(V,J):=(\R^{2n},J_{\text{st}})$ and open balls $\mathring{B}_{R_k}:=\mathring{B}_{R_k}(0)\subset\R^{2n}$ of radii $R_k\to R'$. 
The preimage of the center is nonempty, $v^{-1}(\{0\})=u^{-1}(\{p_0\})\neq \emptyset$, since $u_1$ passes through $p_0=\phi(0)$. 
The domain $\tilde S$ of $v$ is an open subset of $S^2$ because $\varphi(\mathring{B}_{R'})\subset\CP^1\times T$ is the image of an open set under an embedding.
To apply the lemma we need to restrict $v$ to a compact subdomain $S_k\subset\tilde S$ with smooth boundary such that $\|v(z)\|\geq R_k$ for all $z\in\partial S_k$. For that purpose we consider the smooth function $\rho:\tilde S \to \R, z \mapsto \|v(z)\|^2$. Since its regular values are dense we can find a sequence $0<R_k<R'$ with limit $\lim_{k\to\infty}R_k= R'$ such that $R_k^2$ are regular values of $\rho$. Then $S_k:= \{z\in \tilde S \,|\, \|v(z)\|\leq R_k\}$ is a domain with smooth boundary $\partial S_k = \rho^{-1}(R_k^2)$. It is compact because $S_k=v^
{-1}(B_{R_k})=u_1^{-1}(\phi(B_{R_k})\subset S^2$ is a closed subset of the compact $S^2$. 
Moreover, $v|_{S_k}$ is nonconstant on each connected component of $S_k$, since $u_1$ is nonconstant (as it has positive energy) and its critical points in $S^2$ are a finite set by \cite[Lemma~2.4.1]{McDuffSalamon2}.

So, we can apply Lemma~\ref{lem:holmonotone} to $v|_{S_k}:S_k\to V=\R^{2n}$ and the open ball $\mathring{B}_{R_k}=\{q\in\R^n\,|\, \|q\|<R_k\}$ centered at $p=0\in\R^{2n}$ to obtain
$$
\pi R_k^2 \;\leq\; \int_{v^{-1}(\mathring{B}_{R_k})} v^*\omega_{\text{st}} 
\;=\; 
\int_{u_1^{-1}(\phi(\mathring{B}_{R_k}))} u_1^*\omega
\;\leq\;
\int_{S^2} u_1^*\omega
\;=\;\pi (r+\varepsilon)^2 . 
$$
As $R_k\to R'$, this yields
$\pi (R')^2 \leq \pi (r+\varepsilon)^2$ as claimed; and by taking $R'\to R$ and $\eps\to 0$ this proves the non-squeezing $R \leq r$.
\end{proof}

\subsection{A compact moduli space}
\label{sec:moduli-space}

In this and the next subsection, we prove Theorem~\ref{thm-intro:J1-sphere}, while assuming that the M-polyfold construction in \S\ref{sec:polyfold-setup} holds.

Our argument is a special case of proving the independence of Gromov-Witten invariants from the choice of a compatible almost complex structure $J$. Indeed, in Lemma~\ref{lem:only-one-J0-curve} we compute the number of pseudoholomorphic curves in the particular homology class intersecting the given point $p_0$ to be $1$ for $J=J_0$. So, by showing that this count is independent of $J$, we can show the existence of a $J_1$-holomorphic map in Theorem~\ref{thm-intro:J1-sphere}.\\
For that, we use the fact that the space of $\omega$-compatible almost complex structures is contractible (see e.g.~\cite[Prop.~4.1]{McDuffSalamon1}).
Thus, we can choose a smooth path $ (J_t)_{t \in [0,1]}$ of $\omega$-compatible almost complex structures from $J_0$ to $J_1$. 
Moreover, we fix a marked point $z_0\in S^2$ that we require is mapped to $p_0$ by all the considered maps.

Then, for every $t\in[0,1]$, we define the moduli space of $J_t$-holomorphic curves
\begin{align} \label{eq:definition-of-M_t}
    \mathcal{M}_t := \left\lbrace
        \begin{matrix}
            u : S^2 \rightarrow \mathbb{CP}^1 \times T \;\\
            \text{ smooth}
        \end{matrix}
        \Bigg\vert\;
        \begin{matrix}
            u(z_0) = p_0, \;\; \bar{\partial}_{J_t}u =0, \\ [u]= [\mathbb{CP}^1 \times \{\textup{pt}\}]
        \end{matrix}
    \right\rbrace
    \Bigg/ \sim,
\end{align}
where $u \sim u'$ iff there is a biholomorphism $\psi: S^2 \rightarrow S^2$ such that $u' = u \circ \psi$.
Here $\bar{\partial}_{J_t}$ is the Cauchy-Riemann operator for $J_t$, that is, $\bar{\partial}_{J_t}u = \tfrac{1}{2} (\rd u + J_t\circ \rd u \circ i)$.

Now consider the moduli space for the family $(J_t)_{t\in[0,1]}$
\begin{align} \label{eq:definition-of-M}
    \mathcal{M} := \left\lbrace (t,[u]) \;\vert\; t\in[0,1], [u]\in \mathcal{M}_t \right\rbrace.
\end{align}

We will prove Theorem~\ref{thm-intro:J1-sphere} by contradiction: Assuming $\cM_1 = \emptyset$, in \S\ref{sec:applying-regularization-scheme} we will show that a perturbation of $\cM$ is a compact $1$-dimensional cobordism from $\cM_0$ to $\cM_1$. 
However, $\cM_0 = \{[u_0]\}$ consists of exactly one element (see Lemma~\ref{lem:only-one-J0-curve}). Therefore, this cobordism contradicts that $\cM_1$ is empty.

The first step in completing the proof of Theorem~\ref{thm-intro:J1-sphere} is to establish compactness of the unperturbed moduli space. This is a special case of Gromov's compactness theorem, where bubbling is excluded in the given homology class using the restriction of the topology of $T$ from Remark~\ref{rmk:J1-sphere}. 
For general symplectic manifolds $(T,\omega_T)$, we would need to compactify $\cM$ by bubble trees. So, the subsequent proof of $\cM_1$ being nonempty would only show the existence of (possibly singular) $J_1$-curves and not necessarily smooth spheres in the required homology class. 

\begin{thm}
\label{thm:compactness}
Let $(T,\omega_T)$ be a compact symplectic manifold with $\omega_T(\pi_2(T))=0$.
Then the moduli space $\mathcal{M}$ defined in Equation \eqref{eq:definition-of-M} is compact with respect to the quotient topology induced by $[0,1]\times\mathcal{C^\infty}(S^2,M)$.
\end{thm}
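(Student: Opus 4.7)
The plan is to apply Gromov's compactness theorem and use that the area $\pi(r+\eps)^2$ is minimal among classes representable by $J$-holomorphic spheres in $\CP^1\times T$ to rule out any nontrivial bubbling in the limit.

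Starting from an arbitrary sequence $(t_n,[u_n])_{n\in\N}$ in $\cM$, I would pick smooth representatives $u_n$ with $u_n(z_0)=p_0$ and extract a subsequence with $t_n\to t_\infty\in[0,1]$. All $u_n$ carry the fixed energy $E(u_n)=\omega([\CP^1\times\{\pt\}])=\pi(r+\eps)^2$, and smoothness of the family $(J_t)_{t\in[0,1]}$ on the compact interval provides uniform control of $J_{t_n}$ and a smooth limit $J_{t_\infty}$. These are the standard hypotheses of Gromov's compactness theorem for spheres into a closed symplectic manifold \cite[Thm.~5.3.1]{McDuffSalamon2}: after passing to a further subsequence and reparameterizing by Möbius transformations, one obtains convergence to a stable $J_{t_\infty}$-holomorphic bubble tree $v_1,\ldots,v_k$ of total homology class $[\CP^1\times\{\pt\}]$.

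The main step is to show that this bubble tree consists of a single smooth sphere, using the hypothesis $\omega_T(\pi_2(T))=0$. Via the Künneth decomposition $H_2(\CP^1\times T;\Z)=\Z[\CP^1\times\{\pt\}]\oplus H_2(T;\Z)$, each bubble class splits as $[v_i]=A_i[\CP^1\times\{\pt\}]+B_i$. Since $v_i$ is $J_{t_\infty}$-holomorphic one has $A_i\geq 0$, and since $B_i$ lies in the Hurewicz image of $\pi_2(T)\to H_2(T)$ the hypothesis yields $\omega_T(B_i)=0$, so $E(v_i)=A_i\pi(r+\eps)^2$. Energy conservation $\sum_i E(v_i)=\pi(r+\eps)^2$ then forces exactly one component $v_j$ to be non-constant with $A_j=1$; any other component would be a ghost bubble with zero energy. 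The stability requirement that each constant component carry at least three special points rules these ghost bubbles out: with only the single marked point $z_0$ and a unique non-ghost component, no attachment of a ghost bubble can reach three special points.

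Hence the limit reduces to a single smooth $J_{t_\infty}$-holomorphic sphere $u_\infty=v_j$ in class $[\CP^1\times\{\pt\}]$ passing through $p_0$. Choosing the Möbius reparameterizations of the $u_n$ so that $z_0$ is carried to a point of $u_\infty^{-1}(p_0)$, the no-bubbling portion of Gromov's convergence yields the smooth limit $u_n\to u_\infty$ in $C^\infty(S^2,\CP^1\times T)$ with $u_\infty(z_0)=p_0$, i.e.\ $(t_n,[u_n])\to(t_\infty,[u_\infty])$ in the quotient topology of $\cM$. The principal obstacle is combining the energy identity with the stability condition to exclude ghost bubbles; once this is in place, the marked-point adjustment and $C^\infty$ convergence follow directly from Gromov's theorem.
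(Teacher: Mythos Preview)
Your proposal is correct and follows essentially the same route as the paper: apply Gromov compactness to get a bubble tree, use the energy quantization coming from $\omega_T(\pi_2(T))=0$ and $\omega_{\CP^1}([\CP^1])=\pi(r+\eps)^2$ to force a single nonconstant component, and invoke stability with one marked point to exclude ghosts. The only cosmetic differences are that the paper projects to the two factors rather than invoking K\"unneth explicitly, cites Hummel rather than McDuff--Salamon, and handles the marked point by working directly with Gromov convergence of pointed curves (so the reparameterizations fix $z_0$ throughout) rather than adjusting the marked point after the fact; the paper also notes at the end that metrizability of the Gromov topology upgrades sequential compactness to compactness.
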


\begin{proof}
This proof follows \cite[Chapter~4]{hummel} and \cite{Ackermann}.
Let $(t_n, [u_n])$ be a sequence in the moduli space $\cM $. In particular, $u_n$ is a sequence of $J_{t_n}$-holomorphic maps in $\cM$. We want to show that there exists a subsequence which converges to an element $(t_\infty,[u_\infty]) \in \cM$, meaning that $u_\infty$ is a $J_{t_\infty}$-holomorphic map. Here, by \textit{convergence} we mean the following:
\begin{enumerate}[(1)]
    \item $t_n$ converges to $t_\infty$ in the usual topology of $[0,1]\subset\R$, \label{item:t_convergence}
    \item $[u_n]$ converges to $[u_\infty] \in \cM_{t_\infty}$ in the \textit{Gromov sense}, that is, there exist biholomorphic maps $\phi_n : (S^2, j) \to (S^2, j)$ with $\phi_n(z_0) = z_0$ such that the reparameterized maps $u_n \circ \phi_n : S^2 \to Q$ converge in $C^\infty$ to $u_\infty$. \label{item:u_convergence}
\end{enumerate}

To achieve \ref{item:t_convergence} we can choose a subsequence of $t_n\in[0,1]$ with $t_n$ converging to a $t_\infty \in [0,1]$ since the interval is compact. 
Then, as $\{J_t\}_{t \in [0,1]}$ is a continuous path, we can deduce $\cC^\infty$-convergence of the almost complex structures $J_{t_n}\to J_{t_\infty}$. 
To achieve \ref{item:u_convergence}, we consider this subsequence, denoting it again by $(t_n, [u_n])$. 
The key observation is that the area functional is uniformly bounded on $\mathcal{M}$. Indeed, all $J_{t_n}$-holomorphic curves $u_n$ represent the same homology class $[u] = [\mathbb{CP}^1 \times \{\textup{pt}\}]$, so that
$$E(u_n) = \int_{\mathbb{CP}^1} u_n^{*} \omega = \omega( [\mathbb{CP}^1 \times \{\textup{pt}\}])=\pi(r+\eps)^2$$
is constant and hence bounded. Moreover, $S^2$ is a closed surface.
Thus, by Gromov's compactness theorem (e.g.\ \cite[Chapter V, Thm.~1.2]{hummel}) there exists a subsequence of $[u_n]$ converging in the Gromov sense to a $J_{t_\infty}$-holomorphic cusp curve $\overline{u}_\infty$ of the same energy $E(\overline{u}_\infty)=\pi(r+\eps)^2$.

Actually, this cusp curve consists of a single $J_{t_\infty}$-holomorphic sphere. Indeed, let $[v_1], \dots,[v_k]$ be the non-constant components of $\overline{u}_\infty$ of energy $E(v_n) = \int_{\mathbb{CP}^1} v_n^* \omega >0$ which sum to $E(v_1)+ \ldots + E(v_k)= E(\overline{u}_\infty) = \pi(r+\eps)^2$. 
Since the symplectic form $\omega=\omega_{\mathbb{CP}^1} \oplus \omega_{T}$ splits, the energies are the sums $E(u_n)=\omega_{\mathbb{CP}^1}(\alpha_n) +\omega_T(\beta_n)$ of symplectic areas of the projections $\alpha_n:=[{\rm pr}_{\CP^1}\circ u_n]$ and $\beta_n:=[{\rm pr}_{T}\circ u_n]$ to the factors $\CP^1$ and $T$.
Here we have $\omega_T(\beta_n)=0$ because of the assumption $\omega_T(\pi_2(T))=0$, 
and $\omega_{\mathbb{CP}^1}(\alpha_n)\in\Z\pi(r+\eps)^2$ since $H_2(\CP^1)$ is generated by $[\CP^1]$ which has symplectic area $\pi(r+\eps)^2$ by construction.
Thus each nontrivial component has energy at least $E(v_n)\geq\pi(r+\eps)^2$, but since the total energy of the bubble tree is $\pi(r+\eps)^2$ this implies $k=1$.
This means that the limit cusp curve $\overline{u}_\infty$ has one non-constant component. Since it has only one marked point (arising from $z_0$ in the definition of $\cM_t$), it cannot have ghost components, and thus $\overline{u}_\infty$ consists of a single $J_{t_\infty}$-holomorphic sphere $u_\infty \colon S^2 \to \mathbb{CP}^1 \times T$. 

The meaning of Gromov-convergence $[u_n]\to \overline{u}_\infty = [u_\infty]$ is exactly as stated in (2) above, so we have shown that $\cM$ is sequentially compact. Finally, compactness follows from the fact that the Gromov-topology is metrizable; see \cite[Theorem~5.6.6.]{McDuffSalamon2}.  
\end{proof}

\subsection{Applying the polyfold regularization scheme}
\label{sec:applying-regularization-scheme}

This section proves Theorem~\ref{thm-intro:J1-sphere}.  We will use the notation and facts established in \S\ref{sec:unique-J0-curve} and \S\ref{sec:moduli-space}, the polyfold constructions from \S\ref{sec:polyfold-setup}, and the polyfold regularization scheme from \S\ref{subsec:polyfold}.

\begin{proof}[Proof of Theorem~\ref{thm-intro:J1-sphere}]
Assume that $\mathcal{M}_1=\emptyset$. Under this assumption, we will the polyfold regularization scheme in Theorem~\ref{thm:polyfold-regularization-scheme} to perturb $\mathcal{M}$ just enough to achieve a smooth structure, while not loosing compactness, or changing its boundary at $t=0,1$. With that, we obtain a compact cobordism between $\mathcal{M}_0= \{[u_0]\}$ and $\mathcal{M}_1=\emptyset$. For that purpose we construct the following objects in \S\ref{sec:polyfold-setup}\footnote{
We introduce a candidate space $\cB$ in \eqref{eq:definition-of-B}, then we establish the M-polyfold structure on an open subset $\cB'\subset\cB$, that we rename into $\cB$ here for ease of notation. 
The exact choice of $\cB'$ as discussed in Remark~\ref{rmk:smaller-base-space-no-problem} is immaterial, since the moduli space $\cM$ and all its regular perturbations will be automatically contained in $[0,1]\times\cB'$. 
}: 

\begin{itemize}
    \item an ambient M-polyfold $[0,1] \times \cB \supset \cM$ modeled on sc-Hilbert spaces (Theorem~\ref{thm:polyfold-structure-base-space}),
    \item a tame strong M-polyfold bundle $\cE \rightarrow [0,1] \times \cB$ (Theorem~\ref{thm:polyfold-structure-bundle}),
    \item a sc-Fredholm section $\sigma: [0,1] \times \cB \rightarrow \cE$ such that $\cM = \sigma^{-1}(0)\subset [0,1]\times \cB$ (Theorem~\ref{thm:sc-Fredholm}).
\end{itemize}
 The use of sc-Hilbert spaces guarantees the existence of sc-smooth bump functions on $\cB$ by \cite[\S5.5]{HWZbook}, which is required for Theorem~\ref{thm:polyfold-regularization-scheme}. We prove that $\sigma$ is transverse to the zero section at $\{0\}\times \cB$ in Theorem~\ref{thm:transverslity-at-0}. Moreover, the assumption $\cM_1=\emptyset$ implies transversality of $\sigma$ at $\{1\}\times \cB$ (see Remark~\ref{rmk:transversality-at-1}).
Now we apply the M-polyfold regularization scheme, see Theorem~\ref{thm:polyfold-regularization-scheme}. This gives a perturbation section $p:[0,1]\times\cB \rightarrow\cE$, such that $(\sigma + p)^{-1} =: \cM^p$
is a compact 1-dimensional manifold. By Remark~\ref{rmk:perturbation-supported-inside-interval}, we can assume $p$ to be supported inside $(0,1)\times \cB$. So, the boundary of $\cM^p$ is
\begin{align*}
\partial (\mathcal{M}^p)
 \;=\; \mathcal{M}^p \cap \partial ([0,1]\times\cB) 
 \;=\;\mathcal{M}^p \cap (\{0,1\}\times\cB)
 \;\cong\; \mathcal{M}_0 \sqcup \mathcal{M}_1 \;=\; \{[u_0]\}.
\end{align*}
Thus, the boundary $\partial (\mathcal{M}^p)$ consists of only one point. Such a manifold does not exist. Therefore, the assumption $\mathcal{M}_1 = \emptyset$ was false and we have proven Theorem~\ref{thm-intro:J1-sphere}.
\end{proof}

\begin{rmk} \rm 
\label{rmk:2n+1-dim-cobordism}
If we dropped the condition $u(z_0)=p_0$ in the construction of the moduli spaces $\cM_t$ in Equation \eqref{eq:definition-of-M_t}, then we could directly use the polyfold setup for Gromov-Witten moduli spaces with one marked point from \cite{HWZgw}. Then the above arguments would provide a (2n+1)-dimensional cobordism $\mathcal{M}^p$ between the 2n-dimensional manifolds $\cM_0$ and $\cM_1^p$, where the latter is obtained from $\cM_1$ by the perturbation $p|_{\{1\}\times \cB}$. One would need to choose the perturbation to be supported away from $J_1$-curves intersecting the point $p_0\in \CP^1\times T$ (which by assumption do not exist), so that the evaluation map on the perturbed moduli space $\ev: \cM_1^p\to \CP^1\times T$ does not contain $p_0$ in its image and hence has degree $0$. 
Moreover, one would need to formulate Lemma~\ref{lem:only-one-J0-curve} in a way that the evaluation map $\ev:\cM_0\to\CP^1\times T$ is a bijection and thus has degree $1$.
This difference between degrees is then in contradiction to the fact that the evaluation map $\cB\to\CP^1\times T$ extends both $\ev|_{\cM_1^p}$ and $\ev|_{\cM_0}$ to a continuous map $\ev:\cM^p \to \CP^1\times T$ on the cobordism. 
\end{rmk}

\section{Polyfold setup}
\label{sec:polyfold-setup}

This section provides the polyfold description of the compact moduli space $\cM$ that is the basis of the proof of Theorem~\ref{thm-intro:J1-sphere} in \S\ref{sec:applying-regularization-scheme}.

Recall that $(T,\omega_T,J_T)$ denotes the torus with a compatible pair of symplectic form and almost complex structure (e.g.\ the one constructed in \S\ref{sec:compatifying-target-space}). We may consider any other compact symplectic manifold with ${\omega_T(\pi_2(T))=0}$, as explained in Remark~\ref{rmk:J1-sphere}. 
Moreover, we fix a point $p_0$ and compatible almost complex structure $J$ on $(Q,\omega)=(\mathbb{CP}^1 \times T,\omega_{\CP^1}\times\omega_T)$. We choose a smooth path $ (J_t)_{t \in [0,1]}$ of $\omega$-compatible almost complex structures from $J_0=i\times J_T$ to $J_1=J$.
Then, \S\ref{sec:moduli-space} proves compactness of the family $\mathcal{M} = \left\lbrace (t,[u]) \;\vert\; t\in[0,1], [u]\in \mathcal{M}_t \right\rbrace$ of moduli spaces
\begin{align*} 
    \mathcal{M}_t = \left\lbrace
        \begin{matrix}
            u : S^2 \rightarrow \mathbb{CP}^1 \times T \;\\
            \text{ smooth}
        \end{matrix}
        \Bigg\vert\;
        \begin{matrix}
            u(z_0) = p_0, \;\; \bar{\partial}_{J_t}u =0, \\ [u]= [\mathbb{CP}^1 \times \{\textup{pt}\}]
        \end{matrix}
    \right\rbrace
    \Bigg/ \sim , 
\end{align*}
with the equivalence relation 
\begin{align*}
    u \sim v \;\; :\Longleftrightarrow\;\; 
    \begin{matrix}
        \exists \psi: S^2 \rightarrow S^2 \text{ biholomorphism with } \psi(z_0)=z_0 \text{ and } u = v \circ \psi.
    \end{matrix}
\end{align*}

The polyfold setup starts with a choice of an ambient space that contains $\mathcal{M}$ (as a compact zero set of a sc-Fredholm section). 
For the family $\mathcal{M}=\bigcup_{t\in [0,1]}\cM_t$, the natural choice of ambient space is $ [0,1]\times \cB$, where $\cB$ is an ambient space for each of the moduli spaces $\cM_t$, given by
\begin{align}
    \mathcal{B} := \left\lbrace 
        \begin{matrix}
            u : S^2 \rightarrow \mathbb{CP}^1 \times T\;\\
            \text{ of class }  W^{3,2} 
        \end{matrix}
        \Bigg\vert\;
        \begin{matrix}
            u(z_0) = p_0,\\
            [u]= [\mathbb{CP}^1 \times \{\textup{pt}\}] 
        \end{matrix}
    \right\rbrace 
    \Bigg/ \sim . \label{eq:definition-of-B}
\end{align}
This uses the same equivalence relation $\sim$ as in the definition of $\mathcal{M}_t$, but we equip $\mathcal{B}$ with the quotient topology induced by the metrizable topology on the Hilbert manifold $H=W^{3,2}(S^2 ,\mathbb{CP}^1 \times T)$, unlike the smooth topology in Theorem~\ref{thm:compactness}. The condition $[u]= [\mathbb{CP}^1 \times \{\textup{pt}\}]$ specifies some connected component(s) of $H$, and $u(z_0) = p_0$ cuts out a further submanifold, so that $\cB$ is the quotient of a Hilbert manifold. However, the action by reparameterization with biholomorphisms is not differentiable (see e.g.\ \cite[\S~2.2]{usersguide}), and so $\cB$ does not inherit the smooth strucure of a Hilbert manifold. Instead, we will show in Theorem~\ref{thm:polyfold-structure-base-space} that it carries the structure of a sc-Hilbert manifold.\footnote{
Strictly speaking, our proofs establish the polyfold structures not for $\cB$ and $\cE\to\cB$ as stated, but after restriction to a $W^{3,2}$-open neighbourhood $\cB'\subset\cB$ of the dense subset of smooth curves $\cB_\infty\subset\cB$. 
Additional estimates could prove $\cB'=\cB$, but applications of the polyfold description yield the results for any $\cB_\infty\subset\cB'\subset\cB$ ; see Remark~\ref{rmk:smaller-base-space-no-problem}.
}

We sometimes write an equivalence class as $\alpha \in \cB$ instead of $[u]\in\cB$, indicating that there is no preferred representative in the class.
To build the bundle $\cE\to[0,1]\times\cB$, we consider for each $(t,\alpha)\in[0,1]\times\mathcal{B}$, the Hilbert space quotient
\begin{align}
    \mathcal{E}_{(t,\alpha)} := \left\lbrace (u,\xi) \;\Bigg\vert\;
        \begin{matrix}
            [u] = \alpha \\
            \xi \in       \Lambda^{0,1}_{J_{t}}
            \big(S^2, u^*\rT(\mathbb{CP}^1 \times T)\big) \;   
            \text{of class } W^{2,2}
        \end{matrix}
    \right\rbrace
    \Bigg/ \sim ,  \label{eq:definition-of-fiber}
\end{align}
where $\Lambda^{0,1}_J(S^2, u^*\rT Q)$ denotes the 1-forms on $S^2$ with values in the pullback bundle $u^*\rT Q$ that are complex antilinear with respect to $i$ on $S^2$ and $J$ on $Q=\mathbb{CP}^1 \times T$. The equivalence relation $\sim$ is given by
\begin{align*}
    (u,\xi) \sim (v,\zeta) \;\; :\Longleftrightarrow\;\;
        \begin{matrix}
            \exists \psi: S^2 \rightarrow S^2 \text{ biholomorphism with } \psi(z_0)=z_0 \\ \text{ and } u = v \circ \psi \text{ and } \xi = \zeta\circ \rd\psi.
        \end{matrix}
\end{align*}
Now the bundle $\cE\to[0,1]\times\cB$ is given by the total space
\begin{align}
    \mathcal{E} := \left\lbrace (t,[(u,\xi)]) \;\Big\vert\; t\in[0,1],\alpha\in\cB, [(u,\xi)] \in \mathcal{E}_{(t,\alpha)}   \right\rbrace  \label{eq:definition-of-E}
\end{align}
with the projection to $[0,1]\times\mathcal{B}$. This projection is well-defined since $(u,\xi) \sim (v,\zeta)$ implies $u\sim v$. 

Finally, the section
\begin{align}
    \sigma : [0,1]\times \mathcal{B} \rightarrow \mathcal{E} , \qquad
    (t,[u]) \mapsto (t,[(u,\bar{\partial}_{J_t}u)]) \label{eq:definition-of-sigma}
\end{align}
cuts out the moduli space $\sigma^{-1}(0) = \mathcal{M}$.
To apply the M-polyfold regularization Theorem~\ref{thm:polyfold-regularization-scheme}, we need to equip the spaces $\mathcal{B}$ and $\mathcal{E}$ with sc-smooth structures such that $\sigma$ is sc-smooth, and moreover show that $\sigma$ is a sc-Fredholm section.

\subsection[The Gromov-Witten space of stable curves]{The Gromov-Witten space of stable curves}
\label{sec:B-inside-space-of-stable-curves}

The sc-smooth structure on the base space $\cB$ is obtained in \S\ref{sec:base} by understanding it as a subset $\cB \subset Z$ of a space of 
stable curves in the manifold $Q=\mathbb{CP}^1\times T$.
The space
\begin{align}
Z := \left\lbrace 
            u: S^2 \to Q \text{ of class } W^{3,2} \,\big|\, 
            [u]= [\mathbb{CP}^1 \times \{\textup{pt}\}]
\right\rbrace
    \Big/ \sim
\label{eq:definition-of-Z}
\end{align}
does not satisfy the condition $u(z_0)=p_0$, but the marked point $z_0$ is present in the definition of the equivalence relation $\sim$, that equals the one in \eqref{eq:definition-of-B}.
Thus, there exists a well defined evaluation map 
$\text{ev}: Z \rightarrow Q,\; \text{ev}([u]) := u(z_0)$.

Then, the base space $\cB$ can be viewed as the preimage of the point $p_0\in Q$ under the evaluation map $\text{ev}$, that is
\begin{align}
    \cB \;=\; \left\lbrace [u] \in Z \; \vert \; u(z_0)=p_0 \right\rbrace \;=\; \text{ev}^{-1}(\{p_0\}) \;\subset\; Z  . 
    \label{eq:BsubsetZ}
\end{align}
The space $Z$ is a subspace of a polyfold that Hofer, Wysocki and Zehnder construct in \cite{HWZgw}, which we will denote by $Z^{\text{HWZ}}$. In \cite{HWZgw}, Hofer, Wysocki and Zehnder also construct a strong polyfold bundle $W\rightarrow Z^{\text{HWZ}}$ and a sc-Fredholm section $\bar{\partial}: Z^{\text{HWZ}} \to W$, that cuts out holomorphic curves.
More precisely, for any numbers $g, k\in\N_0$ and nontrivial homology class $A\in H_2(Q)$, the polyfold $Z^{\text{HWZ}}$ has a component\footnote{These components are open and closed but not necessarily connected.} $Z^{\text{HWZ}}_{g,k,A}$ so that 
$$\bar{\partial}^{-1}(0)\cap Z^{\text{HWZ}}_{g,k,A} =\overline{\cM}_{g,k}(A)$$
is the compactified Gromov-Witten moduli space of (possibly nodal) pseudoholomorphic curves in class $A$ of genus $g$, with $k$ marked points. 
In the following, we will explain how considering genus $g=0$, one marked point, and homology class $A:=[\mathbb{CP}^1\times \{\pt\}]\in H_2(Q)$ will provide an identification $Z\cong Z^{\text{HWZ}}_{0,1,A} \subset Z^{\text{HWZ}}$.

In general, the space $Z^{\text{HWZ}}=\{(S,j,M,D,u) | \ldots\}/\sim$ is defined in \cite[Definition~1.4,1.5]{HWZgw} as a set, and given a topology in \cite[\S3.4]{HWZgw}, as a quotient of
\begin{align*}
\left\lbrace 
        \begin{matrix}
            (S,j,M,D,u) 
        \end{matrix}
        \;\Bigg\vert\;
        \begin{matrix}
            (S,j,M,D) \text{ connected nodal Riemann surface}, \\
            u\in W^{3,2,\delta}(S,Q) , \int_C u^*\omega \geq 0 \text{ for each component } C\subset S, \\
        \int_C u^*\omega > 0 \text{ for each non-stable component } C\subset S  
        \end{matrix}
    \right\rbrace .
\end{align*}
Here $(S,j)$ is a (not necessarily connected) Riemann surface, $M\subset S$ is a finite set of marked points, and $D$ is a finite set of nodal pairs in $S$. These nodal pairs are identified in order to obtain a noded Riemann surface -- which is required to be connected. The maps $u \colon S\to Q$ are then required to descend to a continuous map from the noded Riemann surface, and they are required to be of weighted Sobolev class $W^{3,2,\delta}$ on the complement $S\setminus D$ of the nodal points.

Two stable maps $(S,\ldots ,u)\sim (S',\ldots ,u')$ are equivalent if there is a biholomorphism $\psi$ between the corresponding marked noded Riemann surfaces (i.e.,~compatible with $M$ and $D$), such that $u'=u\circ\psi$.
The conditions on the symplectic area in $Z^{\text{HWZ}}$ are known to guarantee that pseudoholomorphic curves of this type are stable, meaning that their isotropy groups are finite. We show in Lemma~\ref{lem:isotropy-finite} that all maps in $Z^{\text{HWZ}}$ have finite isotropy. In fact, Lemma~\ref{lem:no-isotropy-in-our-homology-class} proves that all maps in our particular case $Z^{\text{HWZ}}_{0,1,A}$ have trivial isotropy. 
To begin with, the following Lemma simplifies the description of this space to the description given in Equation \eqref{eq:definition-of-Z}. 

\begin{lem}
\label{lem:Z-equals-Z}
The polyfold $Z^{\text{HWZ}}_{0,1,A}$ for $A=[\mathbb{CP}^1\times \{\pt\}]\in H_2(Q)$ is naturally identified with $Z$.
The same holds true if we replace the torus $T$ in $Q=\CP^1\times T$ by any compact symplectic manifold with $\omega_T\left( \pi_2(T) \right)=0$.
\end{lem}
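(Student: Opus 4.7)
The plan is to construct a natural bijection $\Phi\colon Z \to Z^{\text{HWZ}}_{0,1,A}$ by sending a class $[u]\in Z$ to the equivalence class of the stable-curve datum $(S^2, i, \{z_0\}, \emptyset, u)$, that is, a single smooth sphere with the one marked point $z_0$ and no nodes. Well-definedness is straightforward: the sole component $(S^2, \{z_0\})$ carries only one special point and is therefore non-stable, but the HWZ area requirement is satisfied because $\int_{S^2} u^*\omega = \omega(A) = \pi(r+\eps)^2 > 0$. The $Z^{\text{HWZ}}$ equivalence relation restricted to such data consists exactly of biholomorphisms of $S^2$ fixing $z_0$, matching the equivalence on $Z$, and when $D=\emptyset$ the weighted Sobolev class $W^{3,2,\delta}(S\setminus D,Q)$ coincides with the standard $W^{3,2}(S^2,Q)$.

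The heart of the proof is surjectivity: every element of $Z^{\text{HWZ}}_{0,1,A}$ admits a representative with empty nodal set and domain $S=S^2$. Pick any representative $(S,j,\{z_0\},D,u)$; since the total genus is zero, $S$ is a tree of $n$ spheres $C_1,\ldots,C_n$ joined by $n-1$ nodes, with the unique marked point on one of them. Decompose $u_*[C_i] = (a_i,b_i) \in H_2(\CP^1)\oplus H_2(T)$; the sum condition gives $\sum a_i = 1$ and $\sum b_i = 0$, while $\omega_T(\pi_2(T))=0$ combined with the Hurewicz map forces $\omega_T(b_i)=0$, so $\int_{C_i} u^*\omega = a_i\,\pi(r+\eps)^2$. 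The HWZ condition $\int_{C_i} u^*\omega \geq 0$ yields $a_i\geq 0$; together with $\sum a_i = 1$ this forces exactly one component to have $a_i=1$ and all others to be ghosts ($a_i=0$) and therefore stable. But every leaf of a tree of spheres has at most two special points (one nodal edge plus possibly the marked point) and is thus non-stable; if $n\geq 2$ there are at least two leaves, producing at least two non-stable components, contradicting the uniqueness of the non-ghost component. Hence $n=1$, so $D=\emptyset$ and $S=S^2$.

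Injectivity of $\Phi$ is immediate, since it merely repackages the same data under the same equivalence relation. It remains to check that $\Phi$ is a homeomorphism: the polyfold topology on $Z^{\text{HWZ}}$ from \cite[\S3.4]{HWZgw} is built from local slice charts with pre-gluing at nodes, and in our case where no representative has nodes the pre-gluing is trivial and the charts reduce to local slices for the reparameterization action on $W^{3,2}(S^2,Q)$ fixing $z_0$, matching the quotient topology used to define $Z$ in \eqref{eq:definition-of-Z}. The main obstacle is in fact this last comparison of topologies rather than the energetic bookkeeping: verifying the match requires unpacking the HWZ local slice construction and checking that on the nodeless stratum it recovers the classical quotient topology on a space of smooth maps modulo reparameterization. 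With that identification in hand, $\Phi$ transports the $Z^{\text{HWZ}}$ polyfold structure to $Z$ for use in the subsequent sections, and since the argument uses only $\omega_T(\pi_2(T))=0$ it covers both the torus case and the general statement.
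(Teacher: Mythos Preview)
Your proof is correct and follows essentially the same approach as the paper: energy quantization via $\omega_T(\pi_2(T))=0$ forces a single non-ghost component, a combinatorial count of special points then rules out ghost components, and the absence of nodes reduces the HWZ topology to the $W^{3,2}$-quotient topology. Your leaf-counting argument (every tree with $n\geq 2$ vertices has at least two leaves, each with at most two special points and hence non-stable) is a slightly cleaner way to make explicit the paper's more elliptical statement that a ghost component has ``at most one nodal point connecting $C$ to the unique non-constant component,'' which as written does not obviously cover ghosts adjacent to other ghosts.
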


\begin{proof} 
First recall that the symplectic area of a map $u:S\rightarrow Q$ depends only on its homology class. Thus, for $[u]=[\mathbb{CP}^1\times \{\pt\}]=A\in H_2(Q)$, we obtain $\int_{S}u^*\omega =\omega(A) = \pi(r+\eps)^2>0$, as computed in Equation \eqref{eq:area-of-u0}.
This is the symplectic area of all equivalence classes of (possibly nodal) maps in
\begin{align*}
    Z^{\text{HWZ}}_{0,1,A}
    =
    \left\lbrace 
        \begin{matrix}
            [(S,j,M,D,u)] \in Z^{\text{HWZ}}
        \end{matrix}
        \;\Bigg\vert\;
        \begin{matrix}
            \text{ genus of } (S,j,D) \text{ is } g=0, \; \#M=1, \\
            [u]= A= [\mathbb{CP}^1\times \{\pt\}]
        \end{matrix}
    \right\rbrace.
\end{align*}
We claim that this can be simplified to the formulation given in \eqref{eq:definition-of-Z}. For that, we first recall that all the nodal surfaces of genus $g=0$ are trees of spheres. 
Next, note that the assumption $\omega_T(\pi_2(T))= 0$ guarantees that all components $S^2\simeq C\subset S$ have energy $\int_C u^*\omega \in \omega(\pi_2(Q)) = \omega_{\CP^1}(\pi_2(\CP^1))=\Z \pi(r+\eps)^2$. Recall here that we chose the symplectic form on $\CP^1$ in \S\ref{sec:compatifying-target-space} in such a way that we have $\omega_{\mathbb{CP}^1}([\mathbb{CP}^1]) = 
\int_{\mathbb{CP}^1} \omega_{\mathbb{CP}^1} =\pi(r+\varepsilon)^2$.
As in the proof of Theorem~\ref{thm:compactness}, the homology condition $[u]= [\mathbb{CP}^1 \times \{\textup{pt}\}]$ implies that $S$ can only have one component, on which $u$ is non-constant. Indeed, the total symplectic area
$\int_S u^*\omega = \omega([\mathbb{CP}^1 \times \{\textup{pt}\}])=\pi(r+\varepsilon)^2$ is the sum of non-negative areas of all components, but each non-constant component has energy $\int_C u^*\omega \geq \pi(r+\varepsilon)^2$.
Moreover, $S$ cannot have so-called ghost components $C\subset S$ with $\int_C u^*\omega = 0$ because stability of such components would require at least three special points, while there is only one marked point from $k=1$ and at most one nodal point connecting $C$ to the unique non-constant component.
Thus all nodal surfaces $(S,j,D)$, that are needed for the component $Z^{\text{HWZ}}_{0,1,A} \subset Z^{\text{HWZ}}$, are single spheres, i.e.\ $D=\emptyset$.

The absence of nodes, i.e.~$D=\emptyset$, also explains why we do not need to consider weighted Sobolev spaces but can directly work with maps of Sobolev class $W^{3,2}$.
Moreover, the topology specified in \cite[\S3.4]{HWZgw} simplifies in this setting to the quotient topology coming from the space of $W^{3,2}$-maps.

Finally, we will use the fact that any compact genus $0$ Riemann surface $(S,j)$ without nodes is biholomorphic to $(S^2,i)$, so that for each point in $Z^{\text{HWZ}}_{0,1,A}$ we can choose representatives $[(S^2,i,M,\emptyset,u)]$. The remaining equivalence relation is then by biholomorphisms of $(S^2,i)$, which we can use to fix the marked point $M=\{z_0\}$ and reduce the equivalence relation to reparameterization with biholomorphisms $\psi:S^2 \to S^2$ that fix the marked point $\psi(z_0)=z_0$ as in Equation \eqref{eq:definition-of-B}. 
This identifies the polyfold given in \cite{HWZgw} with $Z$ as defined in Equation \eqref{eq:definition-of-Z} in the following way:
\begin{align*}
    Z^{\text{HWZ}}_{0,1,A}
    &= \left\lbrace 
        \begin{matrix}
            (S,j,M,\emptyset,u)
        \end{matrix}
        \Bigg\vert\;
        \begin{matrix}
            u\in W^{3,2}(S,Q) \\
            \dots\\
            [u]= [\mathbb{CP}^1 \times \{\textup{pt}\}] 
        \end{matrix}
    \right\rbrace
    \Bigg/ 
    \begin{matrix}
    (S,j,M,u) \hfill \\
    \sim (S',\psi^* j, \psi^{-1}(M), u\circ\psi )
    \end{matrix}\\
      &\cong \left\lbrace 
        \begin{matrix}
            (S^2,i,\{z_0\},\emptyset, u)
        \end{matrix}
        \Bigg\vert\;
        \begin{matrix}
            u\in W^{3,2}(S^2,Q) \\
            [u]= [\mathbb{CP}^1 \times \{\textup{pt}\}] 
        \end{matrix}
    \right\rbrace
    \Bigg/ \sim \\
    &\cong \left\lbrace 
            u\in W^{3,2}(S^2,Q) \;
        \big\vert\;
        \begin{matrix}
            [u]= [\mathbb{CP}^1 \times \{\textup{pt}\}] 
        \end{matrix}
    \right\rbrace
    \big/ \sim \quad=\; Z. \qedhere
\end{align*}
\end{proof}

\subsection{Trivial isotropy}

In this section, we show that all (not necessarily pseudoholomorphic) maps in the space $Z$ defined in \eqref{eq:definition-of-Z} have trivial isotropy due to their specific homology class.
We start by showing that maps of nontrivial finite energy have finite isotropy groups for any compact domain and target.  

\begin{lem}
\label{lem:isotropy-finite}
Let $(Q,\omega)$ be any symplectic manifold, $(\Sigma,j)$ a compact connected Riemann surface, and let $u: \Sigma \rightarrow Q$ be a $\mathcal{C}^1$-map with positive symplectic area $\int_{\Sigma} u^*\omega >0$. Then it has a finite isotropy group
\begin{align*}
    G_u 
    := \{ \psi:(\Sigma,j) \rightarrow (\Sigma,j) 
    \textup{ biholomorphic} \;\vert\;
    u\circ\psi =u \} .  
\end{align*}
\end{lem}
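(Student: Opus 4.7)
My plan is to show $G_u$ is finite by establishing two things: (a)~$\dim G_u=0$, and (b)~even in the case $\Sigma=S^2$ where $\mathrm{Aut}(\Sigma,j)$ is noncompact, $G_u$ cannot be an infinite discrete subgroup. As preliminaries, recall that $\mathrm{Aut}(\Sigma,j)$ is a finite-dimensional Lie group (by uniformization), and that $G_u\subset\mathrm{Aut}(\Sigma,j)$ is closed since the condition $u\circ\psi=u$ is preserved under $C^0$-limits of $\psi$ by continuity of $u$. Cartan's closed subgroup theorem therefore makes $G_u$ a Lie subgroup, and it suffices to check (a) and (b).

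For (a) I would argue by contradiction: suppose $\psi_t\subset G_u$ is a smooth one-parameter subgroup with nonzero generator $X:=\tfrac{d}{dt}\psi_t|_{t=0}$. Then $X$ is a nontrivial holomorphic vector field on $(\Sigma,j)$, so its zero set is discrete. Differentiating $u\circ\psi_t=u$ at $t=0$ gives $du\cdot X\equiv 0$ pointwise on $\Sigma$. On the open dense complement of the zeros of $X$, the pair $\{X,jX\}$ is a real frame of $T\Sigma$, and
\[
    u^*\omega(X,jX) \;=\; \omega(du\cdot X,\,du\cdot jX) \;=\; \omega(0,\,du\cdot jX) \;=\; 0.
\]
Continuity then forces $u^*\omega\equiv 0$ on all of $\Sigma$, contradicting $\int_\Sigma u^*\omega>0$. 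Hence $\dim G_u=0$ and $G_u$ is discrete in $\mathrm{Aut}(\Sigma,j)$.

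For (b), the argument depends on the genus $g$ of $\Sigma$. If $g\geq 2$ then $\mathrm{Aut}(\Sigma,j)$ is itself finite by Hurwitz's bound, and if $g=1$ then $\mathrm{Aut}(\Sigma,j)$ is a compact extension of $\Sigma$ by the finite group of lattice automorphisms; in both cases a closed discrete subgroup is automatically finite. The substantive remaining case, and the main obstacle, is $\Sigma=S^2$, where $\mathrm{Aut}(S^2,i)=\mathrm{PSL}(2,\C)$ is noncompact. If $G_u$ were infinite, discreteness would force a sequence of distinct $\psi_n\in G_u$ to leave every compact subset of $\mathrm{PSL}(2,\C)$. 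A standard Cartan-decomposition/normal-families argument for M\"obius transformations then supplies a subsequence and points $a,b\in S^2$ with $\psi_n\to b$ uniformly on compact subsets of $S^2\setminus\{a\}$. Since $u\circ\psi_n=u$ and $u$ is continuous, passing to the limit yields $u(z)=u(b)$ for every $z\neq a$, so $u$ is constant on $S^2$, contradicting positive area.

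The conceptual heart of the proof is the identity $u^*\omega(X,jX)=0$ extracted from $du\cdot X=0$, which kills the continuous part of the stabilizer in every genus. The only technical nuisance is genus zero, where the M\"obius escape-to-infinity lemma is additionally required to rule out infinite discrete stabilizers; I expect this to be the point requiring the most care, though it is classical.
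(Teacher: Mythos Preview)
Your proof is correct but follows a genuinely different route from the paper's. The paper argues directly and uniformly in the genus: it finds an open ball $B\subset\Sigma$ on which $u$ is injective with $\int_B u^*\omega>0$ (such a ball exists because positive total area forces $\rd u$ to have rank~$2$ somewhere), shows via unique continuation for biholomorphisms that the translates $\{g(B):g\in G_u\}$ are pairwise disjoint, and concludes finiteness from a volume count (each $g(B)$ has volume bounded below, while $\Sigma$ has finite total volume). Your approach instead exploits the Lie-group structure of $\mathrm{Aut}(\Sigma,j)$: part~(a) kills the identity component via the pointwise identity $\rd u\cdot X=0$, and part~(b) disposes of infinite discrete subgroups by a genus trichotomy, with the genus-zero case handled by the north--south dynamics of divergent M\"obius sequences. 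The paper's argument is more elementary and self-contained, avoiding the case split and the external inputs (Cartan's closed-subgroup theorem, Hurwitz's bound, the $\mathrm{PSL}(2,\C)$ convergence lemma); your argument, on the other hand, makes the obstruction to positive-dimensional isotropy conceptually transparent through the vanishing of $u^*\omega(X,jX)$, and situates the result within the classical structure theory of automorphism groups of Riemann surfaces.
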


\begin{proof}
Having positive symplectic area implies that there exists an open ball $B\subset\Sigma$, so that $u$ is injective on $B$. Indeed, since $\int u^*\omega >0$ there must be a point $p \in \Sigma$ such that $(u^* \omega)_p$ does not vanish as a bilinear form on $\rT_p \Sigma$, i.e.~there are vectors $v,w\in \rT_p \Sigma$ with $(u^* \omega)_p(v,w) > 0$. This is equivalent to $\omega_{u(p)}(\rd u(p)(v), \rd u(p)(w))>0$.
Since $\omega$ is skew-symmetric, we know that $\rd u(p)(v), \rd u(p)(w)$ are linearly independent. Thus $\rd u(p)$ has maximal rank $2$, and we can find a ball $B$ around $p$ such that $u|_B$ is injective and $\int_B u^* \omega >0$.

Next, we claim that the images $g(B)$ of $B$ under the automorphisms $g\in G_u$ are all disjoint. Assume this is not the case. 
Then there exists a $g\in G_u\setminus\{{\rm id}\}$ such that $B\cap g(B)\supset U$ contains a nonempty open set $U$. For every $p\in U$ we have $p=g(q_p)$ for some $q_p\in B$. Since $u\circ g = u$ we have $u(p)=u(q_p)$, so that the injectivity of $u|_B$ implies that $p=q_p$. This shows $g|_U\equiv {\rm id}$, so that unique continuation for biholomorphisms on the connected surface $\Sigma$ implies $g={\rm id}$, contradicting the assumption.

Therefore, $\bigcup_{g\in G_u} g(B)$ is a disjoint union of open sets, and each restriction $u|_{g(B)}$ has the same positive energy 
$$
\int_{g(B)} u^*\omega =\int_{B} g^*u^*\omega = \int_{B} u^*\omega =:\delta >0 . 
$$
If $u^*\omega$ is everywhere non-negative, this implies that $G_u$ cannot have more than $\delta^{-1} \int_{\Sigma} u^*\omega$ elements. This is the case for $u$ being pseudoholomorphic.
To prove finiteness of $G_u$ in general, we pick metrics on $\Sigma$ and $Q$ with respect to which $\rd u$ and $\omega$ are bounded. Then we have $\int_{g(B)} u^*\omega \leq C~ \rm{Vol}(g(B))$ for some constant $C>0$, and hence $\rm{Vol}(g(B))\geq\tfrac{\delta}{C}$. Since the total volume of $\Sigma$ is finite, and the sets $g(B)$ are disjoint, this implies that $G_u$ must be finite.
\end{proof}

For spheres in the specific homology class $[u]=[\CP^1\times\{\pt\}]$ in $Q=\CP^1\times T$ we can extend this argument to show that the isotropy groups are in fact trivial.

\begin{lem}
\label{lem:no-isotropy-in-our-homology-class}
If $[u]\in Z$, then $G_u = \{\textup{id}\}$.
\end{lem}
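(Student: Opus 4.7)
The plan is to combine the finiteness already furnished by Lemma~\ref{lem:isotropy-finite} with the fact that the homology class $[\CP^1\times\{\pt\}]$ is indivisible in the $\CP^1$-factor. Any nontrivial element of $G_u$ would be a finite-order M\"obius transformation fixing $z_0$, hence (up to a coordinate change) a rotation of $S^2$; such a rotation would force the projection of $u$ to $\CP^1$ to factor through a branched cover of degree $\geq 2$, which is incompatible with the degree being $1$.

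First I would apply Lemma~\ref{lem:isotropy-finite} to conclude $|G_u|<\infty$. Its hypotheses are met because the Sobolev embedding $W^{3,2}(S^2,Q)\hookrightarrow \mathcal{C}^1(S^2,Q)$ in real dimension $2$ makes $u$ a $\mathcal{C}^1$-map, and the symplectic area
\[
\int_{S^2} u^*\omega \;=\; \omega\bigl([\CP^1\times\{\pt\}]\bigr) \;=\; \pi(r+\eps)^2 \;>\; 0
\]
is positive.

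Next I would classify the finite subgroups of $\mathrm{Aut}(S^2,z_0)$. Choosing a chart that sends $z_0$ to $\infty$, this stabilizer is the complex affine group $\{z\mapsto az+b : a\in\C^*,\,b\in\C\}$. Projection to the linear part $a\in\C^*$ has kernel consisting of translations, and $(\C,+)$ has no nontrivial finite subgroups, so any finite subgroup injects into $\C^*$ and is therefore cyclic. Conjugating within $\mathrm{Aut}(S^2,z_0)$, a generator of such a subgroup can be taken to be a rotation $\psi(z)=\zeta z$ with $\zeta$ a primitive $n$-th root of unity. In particular, the only topology of a quotient $S^2/\langle\psi\rangle$ that can occur is again $S^2$, with the quotient map $q\colon S^2 \to S^2/\langle\psi\rangle\cong S^2$ an $n$-fold branched cover of degree $n$ (branched at $z_0$ and its antipode).

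Finally, suppose for contradiction that some $\psi\in G_u$ has order $n\geq 2$, and consider the continuous projection $f:=\mathrm{pr}_{\CP^1}\circ u \colon S^2 \to \CP^1$. The homology condition $[u]=[\CP^1\times\{\pt\}]$ gives $f_*[S^2]=[\CP^1]$, so $\deg(f)=1$. The invariance $u\circ\psi=u$ implies $f\circ\psi=f$, so $f$ descends to a continuous map $\bar f \colon S^2/\langle\psi\rangle\to\CP^1$ with $f=\bar f\circ q$. Taking degrees yields $1=\deg(f)=\deg(\bar f)\cdot \deg(q)=n\cdot \deg(\bar f)$, which is impossible for $n\geq 2$. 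Hence $G_u=\{\mathrm{id}\}$.

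The main (minor) obstacle is the algebraic classification of finite subgroups of the point-stabilizer in $\mathrm{Aut}(S^2)$; the degree argument itself is topological and applies verbatim to continuous maps. Note that the proof does not use any pseudoholomorphicity of $u$, only its homology class, which is consistent with the claim being stated for every $[u]\in Z$.
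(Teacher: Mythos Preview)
Your argument is essentially the same as the paper's: reduce to a finite-order M\"obius transformation, conjugate it to a rotation, factor through the cyclic quotient, and derive a contradiction from the degree (equivalently, the homology class) in the $\CP^1$-factor. The degree formulation $1=\deg(f)=n\cdot\deg(\bar f)$ is a clean way to phrase what the paper does by writing $[u]=k\cdot[v]$.

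There is one slip in your setup. You assert that any nontrivial element of $G_u$ fixes $z_0$ and then classify finite subgroups of $\mathrm{Aut}(S^2,z_0)$. But $G_u$ in Lemma~\ref{lem:isotropy-finite} is defined as $\{\psi\in\mathrm{Aut}(S^2,i)\mid u\circ\psi=u\}$ with no marked-point condition, so a priori its elements need not fix $z_0$. Fortunately your contradiction step never uses this: once $G_u$ is finite, any nontrivial $\psi\in G_u$ has finite order in $\mathrm{PSL}(2,\C)$, hence is elliptic and conjugate (in the full M\"obius group, not just the stabilizer of $z_0$) to $z\mapsto\zeta z$; the quotient $S^2/\langle\psi\rangle\cong S^2$ and the degree argument go through verbatim. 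So simply delete the restriction to $\mathrm{Aut}(S^2,z_0)$ and the affine-group digression, and invoke directly that finite-order M\"obius transformations are elliptic. With that correction your proof matches the paper's.
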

\begin{proof}
The Sobolev embedding $W^{3,2}(S^2,Q)\subset\cC^1(S^2,Q)$ and Lemma~\ref{lem:isotropy-finite} imply that elements of $Z$ have finite isotropy groups. 
To prove that they are trivial, we consider $u\in W^{3,2}(S^2,Q)$ with finite but nontrivial isotropy group $G_u \neq \{\textup{id}\}$ and we will show that $[u] \neq [\mathbb{CP}^1 \times \{\textup{pt}\}]$, and thus $[u]\notin Z$.

Note that $G_u\subset \textup{Aut}(S^2,i)=\textup{PSL}(2,\C)$ is a subgroup of the Möbius group. Since $G_u$ is finite, it must consist of elements of finite order. 
Möbius transformations are classified into parabolic, elliptic and hyperbolic/loxodromic ones, corresponding to their geometric and algebraic properties.\footnote{See for example the lecture notes \cite{Olsen} for a detailed geometric description of the Möbius group, especially \cite[Cor.~12.1]{Olsen} for the statement about elements of finite order.}
The only Möbius transformations of finite order $k>1$ are elliptic ones corresponding to a rotation by angle $\frac{2\pi}{k}$ around two different fixed points in the extended complex plane. 
Since $G_u$ is assumed to be nontrivial, it must contain some such rotation $f\in\textup{Aut}(S^2,i)$.
We can moreover choose a biholomorphism $\psi$ of $\C \cup \{\infty\}$ that maps the fixed points of the rotation to $0$ and $\infty$. Then the map $u' := u \circ \psi^{-1}$ represents the same homology class as $u$ and its isotropy group contains $g:= \psi\circ f\circ\psi^{-1}$, which is a rotation of order $k>1$ fixing $0$ and $\infty$. 
Thus, $g: \C\cup\{\infty\} \to \C\cup\{\infty\}$ is given by $g(z)= e^{2\pi i/k}z$, and we have $u'(re^{i\theta + m\cdot2\pi i /k}) = u'(re^{i\theta})$ for all $m\in\Z$ since $g^m\in G_{u'}$. 
This allows us to factorize $u' = v \circ \rho_k$ with $v(re^{i\theta}):=u'(r e^{i \theta / k})$
and $\rho_k(r e^{i\theta}) := r e^{ki\theta} $ for $r\in (0,\infty)$, $\theta\in [0,2\pi]$.
By identifying $S^2\cong \C\cup\{\infty\}$, this defines continuous maps $v:S^2\to Q$ and $\rho_k:S^2\to S^2$  with $v(0)=u(0)$, $v(\infty)=u(\infty)$, $\rho_k(0)=0$, and $\rho_k(\infty)=\infty$. 
Finally, this implies $[u]=[u'] ={\rm deg}(\rho_k) \cdot  [v] \in H_2(Q)$, where ${\rm deg}(\rho_k) = k >1$, because $\rho_k$ is a $k$-fold cover of $S^2$.  
This contradicts $[u]=[\CP^1\times\{\pt\}]$, since $\frac 1k [\CP^1]\in H_2(\CP^1)$ is not representable by a map $\pr_{\CP^1}\circ v: S^2 \to \CP^1$.
\end{proof}

\subsection{The base space}
\label{sec:base}

Within this section, we explain how to equip the base space $\cB$ defined in \eqref{eq:definition-of-B} (and thus also $[0,1]\times\cB$ -- see Corollary~\ref{cor:boundary}) with a polyfold structure. Since the isotropy is trivial by Lemma~\ref{lem:no-isotropy-in-our-homology-class}, this means we give $\cB$ an M-polyfold structure, as discussed in \S\ref{subsec:polyfold}.
In fact, due to the absence of nodal maps, we can specify this further to an atlas of local homeomorphisms to open subsets of sc-Hilbert spaces, whose transition maps are sc-smooth. 

\begin{rmk} \rm \label{rmk:sc-filtration}
Before stating this result rigorously, we need to introduce one more piece of polyfold notation (also see \cite[\S4.1]{usersguide}). Every polyfold (and thus also M-polyfold or sc-Hilbert manifold) $Z$ contains a dense subset $Z_\infty\subset Z$ of so-called smooth points and a nested sequence of subsets $$Z_\infty \subset \ldots \subset Z_{k+1} \subset Z_k \subset \ldots Z_0=Z.$$ 
Each of these is equipped with its own metrizable topology, so that, in particular, the inclusion maps $Z_{k+1}\hookrightarrow Z_k$ are continuous. 

In most applications, the smooth points $z\in Z_\infty$ are the smooth maps modulo reparameterization, whose domains may be nodal. 
For the Gromov-Witten polyfold $Z$ in Equation \eqref{eq:definition-of-Z}, the points $[u]\in Z_k$ are given by maps $u:S^2\to Q$ of class $W^{3+k,2}$, and $Z_k$ is equipped with the quotient of the $W^{3+k,2}$ -topology. Correspondingly, $Z_\infty$ consists of the equivalence classes of smooth maps.
\end{rmk}

\begin{thm}
\label{thm:polyfold-structure-base-space}
After replacing the base space $\cB$ with an open neighborhood $\cB'\subset \cB$ of the smooth points $\cB_{\infty} =\cB\cap Z_\infty$, it carries the natural structure of a sc-Hilbert manifold and thus of an M-polyfold.
\end{thm}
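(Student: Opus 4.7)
The plan is to inherit the sc-structure from the Gromov--Witten polyfold of Hofer--Wysocki--Zehnder and then cut out $\cB$ as a sc-submanifold via the evaluation map. By Lemma~\ref{lem:Z-equals-Z}, we identify $Z$ with the component $Z^{\text{HWZ}}_{0,1,A}\subset Z^{\text{HWZ}}$, which carries the polyfold structure from \cite{HWZgw} with scale filtration $Z_k$ consisting of equivalence classes $[u]$ with $u\in W^{3+k,2}(S^2,Q)$. Two features of our setting simplify this general structure: first, Lemma~\ref{lem:Z-equals-Z} shows that no nodal domains appear, so the gluing retractions that define the M-polyfold charts of $Z^{\text{HWZ}}$ are trivial in our component; second, Lemma~\ref{lem:no-isotropy-in-our-homology-class} shows that the reparameterization groupoid has trivial isotropy at every point of $Z$. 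Consequently, the general polyfold charts of \cite{HWZgw} restrict on $Z$ to local homeomorphisms from open subsets of sc-Hilbert spaces, equipping $Z$ with a sc-Hilbert manifold structure.

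Concretely, given a smooth $[u_*]\in Z_\infty$, I would construct a local chart around $[u_*]$ by selecting a local slice for the Möbius action fixing $z_0$: pick a smooth representative $u_*$, and parameterize nearby maps as $u=\exp_{u_*}(\eta)$ for $\eta$ in a complement (say, an $L^2$-orthogonal complement with respect to the metric $g_0$) to the infinitesimal orbit of reparameterizations fixing $z_0$ inside $W^{3,2}(S^2,u_*^*\rT Q)$. Since isotropy is trivial, there is a $W^{3,2}$-open neighborhood of $u_*$ on which every orbit intersects the slice in exactly one point, yielding a local homeomorphism from the slice to an open set in $Z$. The scale filtration on the chart is induced by the filtration $W^{3+k,2}\subset W^{3,2}$. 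The transition maps between two such slices are compositions of the maps $u\mapsto u\circ\psi(u)$, where $\psi(u)$ is the Möbius transformation bringing $u$ into the second slice; these are not classically differentiable on a fixed Sobolev space, but they are sc-smooth by the standard sc-calculus of reparameterizations \cite[\S2.2]{usersguide}, so the charts form a sc-Hilbert manifold atlas.

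To pass from $Z$ to $\cB=\ev^{-1}(p_0)$, I would show that the evaluation map $\ev\colon Z\to Q$, $[u]\mapsto u(z_0)$, is sc-smooth and submersive at every smooth point. Sc-smoothness follows from the fact that, in a slice chart, $\ev$ is essentially the point evaluation $u\mapsto u(z_0)$ on $W^{3+k,2}$-maps, which is a bounded linear operation continuous on each level and smooth in the sc-sense. Submersivity at a smooth point $[u_*]\in\cB_\infty$ amounts to showing that the linearization $\eta\mapsto\eta(z_0)$ is surjective onto $\rT_{p_0}Q$ when restricted to the slice complement; this holds because constant $\rT_{p_0}Q$-valued sections already surject onto $\rT_{p_0}Q$ under evaluation, and the slice condition can be arranged (by choosing a Möbius complement orthogonal to the finite-dimensional subspace of ``constant variations'' at $z_0$) so that this remains surjective. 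With submersivity at all smooth points, the sc-implicit function theorem \cite[\S3]{HWZbook} produces a sc-Hilbert submanifold structure on $\cB\cap U$ for $U$ a $W^{3,2}$-open neighborhood of $\cB_\infty$, and $\cB':=\cB\cap U$ is then the required open neighborhood carrying a sc-Hilbert manifold structure.

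The main obstacle I expect is the verification of sc-smoothness and surjective sc-linearization of $\ev$ on a genuinely open neighborhood of the smooth locus, given that the slice charts themselves rely on the sc-smooth (not classically smooth) reparameterization action. The delicate point is ensuring that the chosen slice complement varies sc-smoothly with the basepoint and that the resulting sc-Fredholm-type condition for $\ev$ is uniform enough on an open set to invoke the sc-implicit function theorem; this is precisely why the theorem must be stated only on a neighborhood $\cB'$ of $\cB_\infty$ rather than on all of $\cB$.
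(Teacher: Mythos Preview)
Your proposal is correct and follows essentially the same strategy as the paper's first proof: identify $Z$ with the HWZ component, observe that trivial isotropy and absence of nodes reduce the polyfold structure to a sc-Hilbert manifold, and then cut out $\cB=\ev^{-1}(p_0)$ by an implicit function theorem for the sc-smooth evaluation map.

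Two differences are worth noting. First, where you invoke a generic sc-implicit function theorem from \cite{HWZbook}, the paper instead applies Filippenko's restriction theorem \cite[Thm.~5.10(I)]{ben}, which is tailored to exactly this situation (preimage of a point under a sc-smooth map to a finite-dimensional manifold) and handles the transversality verification for you via \cite[\S5.1]{ben}. The paper also observes that $\ev$ is \emph{classically} $\cC^1$ on each scale level, which sidesteps the shift to $Z_1$-topology that Filippenko's general statement would otherwise impose---your fourth paragraph correctly anticipates this subtlety but does not resolve it. Second, the paper gives an independent second proof by explicit chart construction: rather than using an $L^2$-orthogonal complement to the reparameterization orbit as you propose, it follows the HWZ ``good data'' recipe, choosing two additional stabilization points $z_1,z_2$ and codimension-$2$ transverse hypersurfaces $M_{p_i}\subset Q$ so that the slice condition is $\eta(z_0)=0$, $\eta(z_i)\in\rT_{p_i}M_{p_i}$. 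Your $L^2$-orthogonal slice would also work in principle, but the good-data slice has the advantage that it is the restriction of the HWZ charts for $Z$, so sc-smooth compatibility of transition maps is inherited directly from \cite[Prop.~3.19]{HWZgw} rather than requiring a separate argument.
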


\begin{rmk} \rm \label{rmk:smaller-base-space-no-problem}
In practice, we expect $\cB'=\cB$ by an analogue of the estimates in \cite[Theorems~3.8, 3.10]{HWZgw}, which guarantee that charts constructed on neighbourhoods of smooth points $b\in\cB_\infty:=\cB\cap Z_\infty$ cover all of $\cB$.

We need to center charts at smooth points in both proof approaches that we will present. Since we avoid all avoidable estimates, this proves a sc-Hilbert structure on an open neighbourhood $\cB'\subset\cB$ of $\cB_{\infty}$. That is, $\cB'$ contains all equivalence classes of smooth maps $u:S^2\to Q$. (In fact, $\cB'$ contains a $W^{3,2}$-neighbourhood of each such smooth point $[u]$).
However, we may have $[v]\in \cB \setminus\cB'$ for some $v\in W^{3,2}\setminus\cC^\infty$.
While this makes the description of the base space less explicit, it does not affect the rest of the proof.

In fact, we could even allow for a more drastic restriction of the base space $\cB'\subset\cB$ as follows.  Proving Theorem~\ref{thm:polyfold-structure-base-space}, by applying \cite[Thm.~5.10~(I)]{ben} directly, establishes an M-polyfold structure on a $Z_1$-open subset $\cB'\subset \cB\cap Z_1$ such that $\cB\cap Z_\infty \subset \cB'$. 
This would remove all maps in $W^{3,2}\setminus W^{4,2}$ from $\cB'$ and guarantee only that $\cB'$ contains a $W^{4,2}$-neighbourhood of smooth maps.

However, the moduli spaces $\cM_t$ and their perturbations automatically lie in the $\infty$-level $\cB_{\infty}$ due to the regularizing property of sc-Fredholm sections \cite[Def.3.8]{HWZbook}. So neither a shift to $Z_1$-topology nor restricting to a neighbourhood of $\cB_{\infty}$ affects how we can use the M-polyfold regularization scheme (see Theorem~\ref{thm:polyfold-regularization-scheme}) in the proof of Theorem~\ref{thm:nonsqueezing}.
\end{rmk}

There are several ways to prove Theorem~\ref{thm:polyfold-structure-base-space}. 
We will first explain how the natural\footnote{The choices made in the general construction for Gromov-Witten spaces, of a ``gluing profile'' and sequence $2\pi> \ldots \delta_{m+1}>\delta_m > \ldots >0$ of exponential weights, turn out to be irrelevant in our special case due to the absence of nodes.} polyfold structure for
$Z \subset Z^{\text{HWZ}}$ constructed in \cite{HWZgw} induces a polyfold structure on its subset $\cB= \text{ev}^{-1}(\{p_0\}) \subset Z$. 
This proof applies an implicit function theorem by Filippenko~\cite{ben} to the evaluation map $\ev:Z\to Q$. 

\begin{proof}[Proof of Theorem~\ref{thm:polyfold-structure-base-space} by Implicit Function Theorem]
In \cite{HWZgw}, the space $Z^{\text{HWZ}}$ and thus also its (open and closed) component $Z$ is given a polyfold structure.
Following the proof in \cite{HWZgw}, one sees that for the component $Z$ considered here (i.e.\ genus $0$, homology class $[\mathbb{CP}^1 \times \{\textup{pt}\}]$), the model spaces are sc-Hilbert spaces (i.e.\ all retractions are identity maps). Their scale structure corresponds to the dense subsets $Z_m = \{[u] \in Z\;\vert\; u\in W^{3+m,2}(S^2,Q)\}\subset Z$.
Moreover, by Lemma~\ref{lem:no-isotropy-in-our-homology-class} we have trivial isotropy. So \cite{HWZgw} actually constructs $Z$ as a sc-Hilbert manifold.

Now we will use the description of $\cB= \text{ev}^{-1}(\{p_0\})$ in Equation \eqref{eq:BsubsetZ} as a preimage of $p_0\in Q$ under the evaluation map $\ev:Z \to Q, [u] \mapsto u(z_0)$. 
Note that the evaluation map is classically smooth on each level $Z_m\subset Z$, so it is sc-smooth\footnote{The manifold $Q$ is finite-dimensional and so carries the constant scale structure where $Q_m=Q$ for all $m$.} by \cite[Cor.~1.1]{HWZbook}. Furthermore, \cite[\S5.1]{ben} explains why the evaluation map is transverse (in the sense of \cite[Def.~5.9]{ben}) to every submanifold of $Q$, so in particular transverse to $\{p_0\}\subset Q$. Now we can apply \cite[Thm.~5.10~(I)]{ben} to deduce that $\cB= \text{ev}^{-1}(\{p_0\})$ is an M-polyfold.
Though, strictly speaking, \cite{ben} constructs an M-polyfold structure on an open neighbourhood of the $\infty$-level $\cB_{\infty}:=\cB \cap Z_{\infty}$ in the 1-level $\cB \cap Z_1$.
The proof of nonsqueezing could work with such a neighbourhood (see Remark~\ref{rmk:smaller-base-space-no-problem}), but we will argue that the proof of \cite[Thm.~5.10(I)]{ben} in our specific setting does not actually require a shift in the topology -- just a restriction to a neighbourhood $\cB'\subset\cB$ of $\cB_\infty$. 

By \cite[Rmk.1.4~(ii)]{ben}, the linear model for $\cB$ near $[u]\in\cB$ is given by the kernel of the differential
$\rD\,\text{ev} \, ([u]): T_{[u]}Z \rightarrow T_{p_0}Q$, 
which we need to equip with a sc-structure.
This yields two reasons for the restrictions in \cite{ben}: 
First, the tangent space $T_{[u]}Z$ of an M-polyfold carries a sc-structure only at $[u]\in Z_{\infty}$. 
This is why \cite{ben} builds charts centered at smooth points only, and so do we.
Second, the differential of a general sc-smooth map $Z\to Q$ is defined only at points in the 1-level $Z_1$.
In our case, the evaluation map $\text{ev}$ is classically smooth on $Z_0$, so there is no need to restrict to $[u]\in Z_1$ when using the differential $\rD\,\text{ev} \, ([u])$.

Moreover, the construction of local charts for $\cB$ uses a local submersion normal form, which \cite[Lemma 2.1]{ben} guarantees only if the map is $\cC^1$ (see \cite[Rmk.1.4~(iii)]{ben}). 
Since \cite{ben} considers general sc-smooth maps $Z\to Q$, this requires a restriction to the $\cC^1$-map $Z_1\to Q$. 
In our case there is no need for this restriction, since the evaluation map $\text{ev}:Z_0\to Q$ is $\cC^1$ without a shift. 

So the charts for $\cB$ constructed by \cite{ben} are homeomorphisms between open neighbourhoods of smooth points $[u]\in\cB_\infty$ and open subsets of the kernel of the differential $\rD\,\text{ev}([u])$. (No retractions appear here due to the absence of nodes in $\cB$.)
Since these kernels are sc-Hilbert spaces, this induces a sc-Hilbert manifold structure on the subset $\cB'\subset\cB$ that is covered by the charts.
\end{proof}

Another way to construct the sc-Hilbert manifold structure on $\cB$ is to directly incorporate the condition $u(z_0)=p_0$ in the construction of charts from \cite{HWZgw}. Going through this proof should also serve to illuminate the general approach of \cite{HWZgw} in this simplified setting.

\begin{proof}[Proof of Theorem~\ref{thm:polyfold-structure-base-space} by construction of charts] 
To begin, one needs to check that $\cB$ is a metrizable space. In general, this is proven in \cite[Thm.3.27]{HWZgw} using the Urysohn criteria (Hausdorff, second countable, and completely regular) that imply metrizability. These criteria are easily checked in our setting: First note that the topology on $W^{3,2}(S^2,Q)$ can be obtained by viewing it as a subset of the Hilbert space $W^{3,2}(S^2,\R^N)$ via some choice of embedding $M\subset\R^N$. Now $\cB=\widehat\cB/{\rm Aut}(S^2,i,z_0)$ is the quotient of a subset $\widehat\cB\subset W^{3,2}(S^2,Q)$ (given by specifying the homology class and value $u(z_0)=p_0$), modulo reparameterization by the biholomorphisms in ${\rm Aut}(S^2,i,z_0)$ that fix $z_0\in S^2$. The relative topology on the subset $\widehat\cB$ is automatically metric (thus Hausdorff) and second countable, however not all these properties are inherited by the quotient. 
The metric induces a pseudometric on the quotient, which implies that the quotient topology is completely regular. To show that the quotient is Hausdorff and second countable, it suffices to prove that the quotient map $\pi:\widehat\cB\to\cB$ is open. To check this we consider any open subset $\widehat U\subset\widehat\cB$ and show that $\pi(\widehat U)$ is open by checking that any $u\in\pi^{-1}(\pi(\widehat U))$ has an open neighbourhood contained in $\pi^{-1}(\pi(\widehat U))$. 
Note that $u\circ\psi=\widehat u \in \widehat U$ for some $\psi\in{\rm Aut}(S^2,i,z_0)$ and $\{ w \in \widehat\cB \,|\, \| w - \widehat u \|_{W^{3,2}} <\eps\} \subset \widehat U$ for some $\eps>0$ since $\widehat U$ is open. Now let $C>0$ be a constant bounding all derivatives of $\psi$ up to third order, then we claim that the $\frac{\eps}{C}$ ball around $u$ is contained in $\pi^{-1}(\pi(\widehat U))$. Indeed, $\| v - \widehat u \|_{W^{3,2}} < \frac{\eps}{C}$ implies $\| v\circ\psi - \widehat u \|_{W^{3,2}} <\eps$, thus $v\circ\psi\in U$ and $v\in \pi^{-1}(\pi(\widehat U))$. 
This proves that the quotient map is open and thus finishes the proof of metrizability. 

Next, the main technical work is the construction of a chart for a neighbourhood of a given point $\alpha\in\cB$. For that purpose we pick a representative $u:S^2\to Q$ of $\alpha=[u]$.  
Since $\cB$ is a quotient by the reparameterization action, the charts are constructed as local slices to this action, which involves choices of additional marked points and transverse constraints. More precisely, we need to choose \textbf{good data centered at $u$} in the sense of \cite[Def.~3.6]{HWZgw}.
Such good data exists by \cite[Prop.~3.7]{HWZgw}.
Recall here that the isotropy group $G=G_u=\{\text{id}\}$ is trivial in our case.
Then good data in our setting consists of the following objects with the following properties\footnote{We essentially use the numbering of \cite[Def.~3.6]{HWZgw}, but merged (5) into 8.), merged (7) into 2.)\ and 4.), merged (8) into 7.), and left out (9), (10) which are trivially satisfied in our case.}

\smallskip\noindent
{\bf 1.)} \textbf{Marked points stabilizing the surface:} 
These exist by \cite[Lemma 3.2]{HWZgw}. In our case a stabilization of 
$(S^2,i,\{z_0\},\emptyset)$ for the map $u$ consists of two points 
$\Sigma=\{z_1,z_2\}\subset S^2$ that satisfy the following conditions:
        \begin{itemlist}
            \item $z_0, z_1, z_2 \in S^2$ are pairwise different.
            \item Denote $p_1:=u(z_1), p_2=u(z_2)$.
                  Then $p_0, p_1, p_2$ are pairwise different.
            \item For $i=1,2$, the map $\rd u(z_i) $ is injective, the bilinear form $u^*\omega (z_i)$ is non-degenerate, and it determines the correct orientation on $\rT_{z_i}S^2$.
        \end{itemlist}
    Now the Riemann surface with the additional marked points $(S^2,i,\{z_0,z_1,z_2\})$ is stable; in fact its isotropy group is trivial in our case.
    \cite{HWZgw} also requires a choice of good uniformizing family parameterizing variations of the surface and marked points, but in our case, since the Deligne-Mumford space of three marked points on a sphere is trivial, this family is constant. It remains to choose {\bf small disk structures}, that is holomorphic embeddings of the closed disk $D^2 \simeq D_{z_i}\subset S^2$ with center $0\simeq z_i$ that are disjoint for $i=1,2$. 

\smallskip\noindent
{\bf 2.)} \textbf{Charts for the target space:} Open neighbourhoods $U(p_i)\subset Q$ of $p_i$ for $i=1,2$ and diffeomorphisms $\psi_i :(U(p_i),p_i) \rightarrow (\R^{2n},0)$ are chosen so that
\begin{itemlist}
\item $U(p_1)$ and $U(p_2)$ are disjoint;
\item $u\vert_{D_{z_i}}$ is an embedding for $i=1,2$, with image contained in $U_1(p_i)$. 
\end{itemlist}
Here and in the following we denote by $U_{\rho}(p_i):=\psi_i^{-1}(\{x\in\R^{2n}\,|\, |x|<\rho\})$ the preimages of balls of any radius $\rho>0$.

\smallskip\noindent
{\bf 3.)}  \textbf{A Riemannian metric} on $Q$ is chosen such that it agrees on the open sets $U_4(p_i)$ with the pullback of the standard metric on $\R^{2n}$ by $\psi_i$.

Moreover, we choose an \textbf{open neighbourhood of the zero-section} $\tilde{\cO}\subset \rT Q$, which is fiberwise convex and such that for every $q\in Q$, the exponential map induced by the chosen metric, $\exp: \tilde{\cO}_q := \tilde{\cO} \cap \rT_qQ \rightarrow Q$ is an embedding.
            
\smallskip\noindent
{\bf 4.)} \textbf{Transverse hypersurfaces:}
We choose submanifolds $M_{p_i}\subset U(p_i) \subset Q$ of codimension $2$ for $i=1,2$
such that $p_i\in M_{p_i}$ and 
\begin{itemlist}
\item $\psi_i(M_{p_i}) \subset \R^{2n}$ is a linear subspace; 
\item $\rT_{p_i}Q = \im\, \rd u (z_i) \oplus H_{p_i}$ for $H_{p_i}:=\rT_{p_i} M_{p_i}$; 
\item $\{z_i\} = D_{z_i} \cap u^{-1}(M_{p_i})$ is the only point in $D_{z_i}$ that $u$ maps to $M_{p_i}$. 
\end{itemlist}
This in particular implies that $u|_{D_{z_i}}$ is transverse to $M_{p_i}$ for $i=1,2$.

 \smallskip\noindent
{\bf 6.)} \textbf{Concentric subdisks:} We choose $SD_{z_i}\subset D_{z_i}$ for $i=1,2$ to be the image of smaller disks under the holomorphic embeddings $D^2\simeq D_{z_i}$.

\smallskip\noindent
{\bf 7.)} \textbf{An open neighbourhood} $U\subset W^{3,2}(S^2,u^*\rT Q)$ of $0$ is chosen such that 
\begin{itemlist}
\item 
every section $\eta\in U$ takes values in $u^*\tilde{\cO}$;
\item
for every $\eta\in U$ and $i=1,2$, the map $u':= \exp_u(\eta): S^2 \rightarrow Q$ satisfies $u'(D_{z_i}) \subset U_2(p_i)$, and $u'\vert_{D_{z_i}}$ is an embedding transverse to $M_{p_i}$ that intersects $M_{p_i}$ at a single point $p'_i=u'(z'_i)$, the preimage of some $z'_i\in SD_{z_i}$.
\end{itemlist}

\smallskip

From now on we will assume that we chose good data for a smooth map $u\in\cC^\infty$, since it suffices to construct charts centered at smooth points $\alpha\in\cB_\infty$.\footnote{\cite[Thms.3.8, 3.10]{HWZgw} imply that for every $[u']\in Z$ there is a smooth $u: S^2\rightarrow Q$ and good data centered at $u$ such that $u'= \exp_u(\eta)$ for some $\eta \in U$. This means that the charts coming from good data centered at the smooth points of $Z$ will cover all of $Z$. This can be extended to include the conditions $u'(z_0)=p_0$ and $\eta(z_0)=0$, and thus prove the same for $\cB$.} 
Now the only place where our constructions for $\cB=\{[u'] \,|\, u'(z_0)=p_0 \}\subset Z$ differ from the constructions in \cite{HWZgw} for $Z$ is the definition of a linear subspace of the sc-space $\big(W^{3+k,2}(S^2,u^*\rT Q)\big)_{k\in\mathbb{N}_0}$. Our choice
\begin{align}
    E_u := \{\eta \in W^{3,2}(S^2,u^*\rT Q) \;\vert\; \eta(z_0)=0, \eta(z_i)\in H_{p_i} \text{ for } i=1,2 \} \label{eq:definition-of-E_u}
\end{align}
adds the condition $\eta(z_0)=0$ that linearizes the condition $u'(z_0)=p_0=u(z_0)$. 
We then consider its open subset $\cO:= E_u \cap U$ and claim that the map
\begin{align}
    \cO \to  \cB , \quad \eta \mapsto [\exp_u \eta]  \label{eq:definition-of-chart}
\end{align}
is a homeomorphism
onto a neighbourhood of $[u]$.
While this is not explicitly stated in \cite[\S3.1]{HWZgw}, we can check the properties of this map in our case: 

\smallskip\noindent
\textbf{Continuity:} \eqref{eq:definition-of-chart} is the composition of two continuous maps:  the pointwise exponential map and a quotient projection.
The latter is continuous by definition of the quotient topology on $\cB$. The first map is the composition $\eta \mapsto E \circ \eta$ with the smooth exponential map $E:u^*\rT Q \to Q$. Checking that this is continuous between $W^{3,2}$-topologies requires local estimates, which hold since $3\cdot 2>\dim S^2$ (see e.g.\ \cite[Lemma~B.8]{W-Uh}).

\smallskip\noindent
\textbf{Injectivity:} Consider $\eta,\tilde{\eta}\in \cO$ with $[\exp_u \eta]=[\exp_u \tilde{\eta}]\in\cB$. This means that there is a biholomorphism $\psi: S^2 \rightarrow S^2$ with $\psi(z_0)=z_0$ and $\exp_u \eta = \exp_u \tilde{\eta} \circ \psi$. By property 4.), the maps $\exp_u \eta$ and $\exp_u \tilde{\eta}$ intersect the submanifolds $M_{p_i}$, $i=1,2$ in unique points $p'_i, \tilde{p}_i \in M_{p_i}$ with unique preimages $z'_i,\tilde{z}_i \in SD_{z_i}$. It follows that $\psi(z'_i)=\tilde{z}_i$ for $i=1,2$. On the other hand, $\eta(z_i),\tilde{\eta}(z_i) \in H_{p_i}=\rT_{p_i}M_i$ and the fact that $M_i\subset U(p_i)$ is totally geodesic imply $\exp_u \eta \in M_i$ and $\exp_u \tilde{\eta}\in M_i$, so the uniqueness of intersection points in property 7.) implies $z'_i = z_i$ and $\tilde{z}_i=z_i$ for $i=1,2$. Thus we have $\psi(z_i)=z_i$ for $i=0,1,2$, which implies $\psi = \id_{S^2}$. From this we deduce $\exp_u \eta = \exp_u \tilde{\eta}$ and thus $\eta = \tilde{\eta}$.

\smallskip\noindent
\textbf{Continuity of the inverse:} 
To show that the map in \eqref{eq:definition-of-chart} is a homeomorphism, it remains to check that it maps open subsets of $E_u$ to open subsets of $\cB$. So we fix some $\eta\in\cO$ and need to show that any $\alpha\in\cB$ sufficiently close to $[\exp_u \eta]$ can be written as $\alpha=[\exp_u \eta']$ for some $\eta'\in\cO$. First note that $v:=\exp_u \eta$ satisfies $v(z_0)=p_0$ along with the slicing conditions $v(z_i)\in M_{p_i}$ for $i=1,2$. Next, $\alpha$ being close to $[v]$ in the quotient topology of $\cB$ means that $\alpha=[v']$ for some representative $v'$ that is $W^{3,2}$-close to $v$. By construction of $\cB$, this map satisfies $v'(z_0)=p_0$.
Moreover, since $v$ is locally transverse to the slicing conditions (as specified in 4.), we will have $v'(z'_i)\in M_{p_i}$ for some $z'_i\approx z_i$. Now we can compose $v'$ with a small M\"obius transformation that fixes $z_0$ and maps $z_i$ to $z_i'$ to obtain a new representative $\alpha=[w]$ that satisfies $w(z_0)=p_0$ and $w(z_i)\in M_{p_i}$. This adjustment in slicing conditions guarantees that $\eta'(z):= \exp_{u(z)}^{-1}(w(z))$ defines a section $\eta'\in E_u$. Moreover, the construction is done such that $w$ is still $W^{3,2}$-close to $v$, which guarantees $\eta'\approx\eta$ and thus $\eta'\in\cO$ for $\alpha$ sufficiently close to $[v]$. This proves openness and thus continuity of the inverse.

\smallskip
Thus we have constructed $\cC^0$-charts for $\cB$ centered at any point $[u]$. It remains to equip the local models with sc-structures and show that the transition maps between these charts are sc-smooth.
This is where we (just as \cite[\S3.2]{HWZgw}) have to restrict to centering our charts at points $[u]\in\cB_\infty$ represented by smooth maps $u:S^2\to Q$. This regularity is required to give the pullback bundle $u^*\rT Q$ a classically smooth structure, so that we can define the Sobolev spaces  $W^{3+k,2}(S^2,u^*\rT Q)$ of sections by closure of the smooth sections.
Then we obtain a sc-structure on the Hilbert space $E_u$ in \eqref{eq:definition-of-chart} by the subspaces $E_u\cap W^{3+k,2}(S^2,u^*\rT Q)$ for $k\in\N_0$.

Finally, compatibility of the charts follows directly from \cite{HWZgw}, since our transition maps are the same maps as theirs, just restricted to subsets of their domains.
More precisely, transition maps between different charts for $Z$ can be obtained from its ep-groupoid description by composing local inverses of the source map with the target map. Both of these structure maps are local sc-diffeomorphisms by the \'etale property in \cite[Def.7.3]{HWZbook}  (for the Gromov-Witten case this is established in \cite[Prop.3.19]{HWZgw}). So sc-smoothness of transition maps follows since they are compositions of local sc-diffeomorphisms. 
\end{proof}

\begin{rmk}[sc-smoothness] \rm 
The key point why sc-smoothness appears here is the following: The choice of stabilization points $z_1,z_2$ above depended on $u$. For different $u'$ we might have other $z'_1,z'_2$, and so the transition map between the charts centered at $u$ and $u'$ needs to reparameterize all the vector fields $\eta$. But the reparameterization biholomorphism is not fixed for one transition map, but depends also on the vector field. So we get a map of the form
\begin{align*}
    \cO \ni \eta \longmapsto \eta \circ \rd\psi_{\eta} \in \cO',
\end{align*}
which is not classically differentiable w.r.t.\ any of the usual Sobolev or H\"older norms. It is however scale-smooth; see \cite[\S2.1,2.2]{usersguide} for further discussion.
\end{rmk}

\begin{rmk}[Good data centered at $u_0$]  \rm 
\label{rmk:good-data-at-u_0}
 For the holomorphic map $u_0(z)=(z,m_0)$ from \S\ref{sec:unique-J0-curve}, it is easy to find good data: In the first factor, $u_0$ is the identity, and in the second factor it is the constant map to $m_0\in T$. For this map, any choice of two different points $z_1,z_2\in S^2\setminus\{z_0\}$ yields a stabilization as required. Then $M_{p_i} := \{z_i\} \times T$ are transverse hypersurfaces since their tangent spaces $H_{p_i} = \{0\} \times \rT_{m_0} T$ satisfy
\begin{align*}
    \im\; \rd u_0 (z_i) \oplus H_{p_i} = \rT_{z_i}\CP^1 \oplus \rT_{m_0} T  = \rT_{u_0(z_i)}Q.
\end{align*}
This choice of charts will be used in \S\ref{sec:transversality-at-boundary}. In particular, we will use
\begin{align}
    E_{u_0} := \{\eta \in W^{3,2}(S^2,u^*\rT Q) \;\vert\; \eta(z_0)=0, \eta(z_i)\in \{0\} \times \rT_{m_0} T \text{ for } i=1,2 \}. \label{eq:definition-of-E_u0}
\end{align}
\end{rmk}

\begin{cor}
\label{cor:boundary}
The product $[0,1] \times \cB$ is a sc-Hilbert manifold with boundary $\partial ([0,1] \times \cB) =\{0,1\} \times \cB$.
\end{cor}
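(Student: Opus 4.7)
The plan is to obtain the sc-Hilbert manifold structure on $[0,1]\times\cB$ by taking products of charts on each factor, and to read off the boundary directly from this product description.

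First, I would invoke Remark~\ref{rmk:finite-dim-mfd-sc-Hilbert-mfd} to recognize $[0,1]$ as a $1$-dimensional sc-Hilbert manifold with boundary: interior points $t\in(0,1)$ have charts into open subsets of $\R$, while $t=0$ and $t=1$ have charts into $[0,\infty)$, all equipped with the trivial scale structure $\R_k=\R$ for every $k\in\N$. In this picture the boundary in the polyfold sense is exactly $\partial [0,1]=\{0,1\}$, as the union of preimages of $\{0\}\subset [0,\infty)$.

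Next, I would combine these with the charts on $\cB$ (meaning $\cB'\subset\cB$ as in Theorem~\ref{thm:polyfold-structure-base-space}). Near a point $(t,[u])\in[0,1]\times\cB$, the product of an interval chart and a chart $\cO\subset E_u\to\cB$ from \eqref{eq:definition-of-chart} provides a local homeomorphism onto an open subset of $\R\times E_u$ for $t\in(0,1)$, respectively of $[0,\infty)\times E_u$ for $t\in\{0,1\}$. The natural sc-structure on the product sc-Hilbert space is defined level-wise by $(\R\times E_u)_k := \R\times \bigl(E_u\cap W^{3+k,2}(S^2,u^*\rT Q)\bigr)$, using that the trivial sc-structure on $\R$ pairs consistently with the Sobolev filtration on $E_u$.

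Finally, I would verify sc-compatibility of the transition maps. Since such transitions factor as the product of a classically smooth transition map on $[0,1]$ (trivially sc-smooth) with a sc-smooth transition map on $\cB$ supplied by Theorem~\ref{thm:polyfold-structure-base-space}, and products of sc-smooth maps are sc-smooth by construction, the resulting atlas is sc-compatible. The boundary of $[0,1]\times\cB$ in this atlas is the union of preimages of $\{0\}\times E_u$; because $\cB$ itself has no boundary (every chart maps into a full sc-Hilbert space, not a half-space), this union is precisely $\{0,1\}\times\cB$. There is no real obstacle here: the only thing to keep in mind is that the factor $E_u$ depends on the chosen smooth base point $u\in\cB_\infty$, but since this dependence is already handled sc-smoothly on the $\cB$ side, nothing new happens upon forming the product with $[0,1]$.
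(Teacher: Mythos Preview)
Your proposal is correct and follows essentially the same approach as the paper: take product charts $I_{t_0}\times\cO\to[0,1]\times\cB$, observe that transition maps factor as a classically smooth map on the interval times a sc-smooth transition on $\cB$, and read off the boundary from the fact that $\cO$ has no boundary while $I_{t_0}$ inherits one only from $\partial[0,1]=\{0,1\}$.
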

\begin{proof}
As a finite dimensional manifold, the interval $[0,1]$ is trivially a sc-Hilbert manifold and the notion of boundary is the same as the classical notion (see also Remark~\ref{rmk:finite-dim-mfd-sc-Hilbert-mfd}).
This means that the product $[0,1]\times \cB$ also is a sc-Hilbert manifold.
In fact, for every pair $(t_0,[u])\in [0,1] \times \cB$ we can choose an open interval $I_{t_0}\subseteq [0,1]$ containing $t_0$ and a chart $\cO\to\cB$ centered at $[u]$ as in \eqref{eq:definition-of-chart}, then a sc-Hilbert manifold chart for a neighbourhood of $(t_0,[u])$ is given by
\begin{align*}
    I_{t_0} \times \cO \;\longrightarrow\;  [0,1] \times \cB  , \qquad
    (t,\eta) \;\longmapsto\; (t,[\exp_u \eta]).
\end{align*}
Sc-smoothness of transition maps between these charts follows directly from sc-smooth compatibility of the charts for $\cB$. 
Finally, $\cO$ has no boundary, and boundary of $I_{t_0}$ arises only from $\partial [0,1]=\{0,1\}$. Since boundary and corner stratifications are determined in local charts, this proves the claim.
\end{proof}

\subsection{The bundle}
\label{sec:bundle}

The purpose of this section is to give the projection $\cE \rightarrow [0,1] \times \cB$ defined in \eqref{eq:definition-of-E} the structure of a tame strong polyfold bundle. To achieve this, we first describe the bundle $W \to Z^{\text{HWZ}}$ from \cite{HWZgw} restricted to $Z\subset Z^{\text{HWZ}}$.

The fibers of $W$ are defined in \cite[\S1.2]{HWZgw} with respect to a fixed almost complex structure $J$ on the target manifold. Using the previous simplifications for $Z\subset Z^{\text{HWZ}}$ (i.e.\ genus $0$, $S=S^2$, homology class $[\mathbb{CP}^1 \times \{\textup{pt}\}]$) we see that the fiber $W_{\alpha}$ over $\alpha\in Z$ is the quotient space
\begin{align*}
    W_{\alpha} = \left\lbrace (u,\eta) \;\Bigg\vert\;
        \begin{matrix}
            [u] = \alpha \\
            \eta \in \Lambda^{0,1}_J\big(S^2, u^*\rT(\mathbb{CP}^1 \times T)\big)\; 
            \text{of class } W^{2,2}
        \end{matrix}
    \right\rbrace
    \Bigg/ \sim 
\end{align*}
of complex antilinear 1-forms of class $W^{2,2}$ with values in the pullback bundle along a representative $u$. Here complex antilinearity of $\eta$ is required with respect to a fixed almost complex structure $J$ on $\mathbb{CP}^1 \times T$ and the standard complex structure on $S^2$.
The equivalence relation is given by
\begin{align*}
    (u,\eta) \sim (v,\mu) \; :\Longleftrightarrow \;
        \begin{matrix}
            \exists \psi: S^2 \rightarrow S^2 \text{ holomorphic} \\ \text{ with } u = v \circ \psi \text{ and } \eta = \mu\circ \rd\psi.
        \end{matrix}
\end{align*}
For $\alpha\in\cB\subset Z$ we can choose a representative $[u]=\alpha$ with $u(z_0)=p_0$, 
and restricting to such representatives reduces the equivalence relation to biholomorphisms $\psi$ with $\psi(z_0)=z_0$.
So if we choose $J = J_t$ for some $t\in[0,1]$, then this identifies our fibers $\cE_{(t,\alpha)} = W_{\alpha}$ with the fibers of a tame strong polyfold bundle constructed in \cite{HWZgw}.
However, we wish to simultaneously extend and restrict the base space: We extend by allowing the almost complex structure to vary, and we restrict to curves through the fixed point $p_0\in Q$. 

\begin{thm}
\label{thm:polyfold-structure-bundle}
Let $\cB'\subset\cB$ be as in Theorem~\ref{thm:polyfold-structure-base-space}\footnote{Recall we expect $\cB'=\cB$ by Remark~\ref{rmk:smaller-base-space-no-problem}.}, then 
$\cE|_{[0,1] \times\cB'} \rightarrow [0,1] \times \cB'$ is a tame strong M-polyfold bundle.
\end{thm}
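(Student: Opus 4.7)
The plan is to obtain the tame strong M-polyfold bundle structure on $\cE|_{[0,1]\times\cB'}$ by following the three-stage pattern used for the base space in Theorem~\ref{thm:polyfold-structure-base-space}: start from the tame strong polyfold bundle $W \to Z^{\text{HWZ}}$ of \cite{HWZgw}, restrict it to the component $Z \cong Z^{\text{HWZ}}_{0,1,A}$ and then to the evaluation preimage $\cB' \subset \cB = \ev^{-1}(p_0)$, and finally incorporate the parameter $t\in[0,1]$ carrying the smooth family $(J_t)_{t\in[0,1]}$.

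For the first restriction, $W|_Z \to Z$ is already a tame strong M-polyfold bundle: the polyfold bundle structure of \cite{HWZgw} descends to the component $Z$ since $Z$ is open and closed in $Z^{\text{HWZ}}$, and Lemma~\ref{lem:no-isotropy-in-our-homology-class} ensures trivial isotropy throughout, so the general polyfold bundle specializes to an M-polyfold bundle whose fiber models are sc-Hilbert spaces (no nontrivial retractions appear in the absence of nodes, as also exploited in Theorem~\ref{thm:polyfold-structure-base-space}). For the restriction to $\cB'$, I would either invoke the pullback theorem \cite[Thm.~5.10]{ben}, using the fact that strong M-polyfold bundle structure pulls back along the sc-smooth transverse evaluation $\ev:Z\to Q$, or mimic the chart construction of Theorem~\ref{thm:polyfold-structure-base-space}. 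In the direct approach, good data centered at a smooth representative $u$ with $u(z_0)=p_0$ gives both a chart $\cO \subset E_u$ for $\cB'$ and a local trivialization of $\cE|_{\{J\}\times\cO}$: to $(\eta,\xi)\in \cO \times W^{2,2}(S^2,\Lambda^{0,1}_i T^*S^2 \otimes u^*\rT Q)$ one assigns $[(\exp_u\eta,\, P_\eta\, \pi^{0,1}_J\xi)]$, where $P_\eta$ is parallel transport identifying $u^*\rT Q$ with $(\exp_u\eta)^*\rT Q$ along the geodesic $s\mapsto \exp_u(s\eta)$, and $\pi^{0,1}_J(\xi) := \tfrac{1}{2}(\xi + J\circ \xi\circ i)$ projects onto $J$-antiholomorphic forms.

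The third stage incorporates the smooth family $(J_t)$: over a product chart $I\times\cO$ for $[0,1]\times\cB'$ as in Corollary~\ref{cor:boundary}, the analogous local trivialization sends $(t,\eta,\xi)$ to $(t,[(\exp_u\eta,\, P_\eta \pi^{0,1}_{J_t}\xi)])$. The two scale structures required for a strong bundle (shifted by one level on the fibers) come from the $W^{2+k,2}$ and $W^{3+k,2}$ regularities of $\xi$, inherited from the fiber model of $W$. Linearity on fibers is automatic since $P_\eta$ and $\pi^{0,1}_{J_t}$ are linear. Tameness is inherited from $W|_Z$: restriction to a sc-submanifold and product with an interval preserve the tameness condition on corner strata.

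The main obstacle will be the sc-smoothness of the transition maps between such trivializations. These intertwine three ingredients: the reparameterization biholomorphisms $\psi\in\mathrm{Aut}(S^2,i,z_0)$ from the equivalence relation defining $\cE$ (which, as in the base space proof, account for the sc- rather than classical smoothness); parallel transport between varying base maps $\exp_u\eta$ and $\exp_{u'}\eta'$; and the $t$-dependent linear projection $\pi^{0,1}_{J_t}$. The first two factors are sc-smooth by the arguments of \cite[\S4]{HWZgw}, and smoothness of $t\mapsto J_t$ makes the family $\pi^{0,1}_{J_t}$ sc-smooth in $(t,\eta)$; combining these and checking compatibility with the scale filtration finishes the verification, which simplifies considerably compared to the general Gromov--Witten setting because no nodal strata, weighted Sobolev spaces, or nontrivial isotropy enter.
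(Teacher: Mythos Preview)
Your approach is essentially the paper's: build explicit local bundle trivializations over product charts $I\times\cO$ by combining a projection onto $J_t$-antilinear forms with parallel transport along the geodesics of the exponential chart, and then check sc-smooth compatibility by reducing to the arguments in \cite{HWZgw}. The paper also records the Filippenko restriction alternative you sketch (Remark~\ref{rmk:proof a}), together with the caveat that it still presupposes a polyfold bundle for the \emph{family} $(J_t)$, which has to be constructed by hand anyway.

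There is one genuine technical point you gloss over. As written, your trivialization $(t,\eta,\xi)\mapsto P_\eta\,\pi^{0,1}_{J_t}\xi$ need not land in the fiber $\cE_{(t,[\exp_u\eta])}=\Lambda^{0,1}_{J_t}\bigl(S^2,(\exp_u\eta)^*\rT Q\bigr)$. The projection $\pi^{0,1}_{J_t}$ produces a form that is $J_t$-antilinear \emph{along $u$}, but parallel transport $P_\eta$ preserves this antilinearity only if it intertwines $J_t$ at the two endpoints, i.e.\ if the connection used is $J_t$-complex. A single fixed connection (such as the one coming from the good data) will not do this for all $t$. The paper deals with this by introducing a smooth \emph{family} of complex connections $\tilde\nabla^t$, built from the Levi--Civita connection of $g_t=\omega(\cdot,J_t\cdot)$, so that the resulting parallel transport $\Gamma_t(\eta)$ is $J_t$-linear; the bundle chart is then $(t,\eta,\xi)\mapsto \Gamma_t(\eta)\circ K_t(\xi)$ with $K_t=\pi^{0,1}_{J_t}$. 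You could equally well fix your version by re-projecting after transport, but some such device is needed for the map into $\cE$ to be well-defined at all, and the $t$-dependence of the connection (or of the second projection) then has to be tracked in the sc-smoothness check.
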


There are again several ways to prove this. Filippenko \cite{ben} has a result about the restriction of bundles to sub-polyfolds which we explain in Remark~\ref{rmk:proof a}. 
Unfortunately, this would require an existing polyfold description for Gromov-Witten moduli spaces with varying $J$, which we discuss in Remark~\ref{rmk:graphtrick}. 
In any case, the proof of transversality of the section $\sigma$ at $t=0$ requires a fairly explicit bundle chart, so our actual proof is an adaptation of \cite{HWZgw} -- to our simplified setting, but extending the constructions to varying $J$.  

\begin{rmk} \label{rmk:graphtrick}\rm 
The analysis in \cite{HWZgw} is formulated for a fixed almost complex structure $J$ on the target manifold $Q$. This might be deemed as sufficient due to the following graph trick: 

Given a smooth family $(J_t)_{t\in\R^k}$ of almost complex structures, parameterized by a finite dimensional space $\R^k$, we can identify the moduli space of $J_t$-holomorphic curves in $Q$ for some $t\in\R^k$ with a moduli space of certain pseudoholomorphic curves in the product manifold $\widetilde{Q}:=\C^k\times Q$ as follows.
We define an almost complex structure $\widetilde J$ on $\widetilde Q$ by $\widetilde J(t+is,q) := J_{\C^k} \times J_t$ at the point $(t+is,q)\in \widetilde Q =\C^k\times Q$. Then $\widetilde J$-holomorphic maps $\widetilde u: S \to \widetilde Q= \C^k\times Q$ in a class $\widetilde A:=[\{\pt\}]\times A$, $A\in H_2(Q)$ are constant in $\C^k$, so that a constraint $\widetilde u(z_0)\in\R^k\times Q$ at a marked point $z_0\in S$ picks out the maps $\widetilde u$ that are of the form $z\mapsto(t, u(z))$ for some $t\in\R^k$ and a $J_t$-holomorphic map $u:S\to Q$.
Thus pairs $(t,u)$ of $J_t$-holomorphic maps in class $A$ for some $t\in\R^k$ can be identified with $\widetilde J$-holomorphic maps in class $\widetilde A$ satisfying the point constraint. 

Our variation of almost complex structures $(J_t)_{t\in[0,1]}$ can be formulated in this way by choosing $J_t$ to be constant near $t=0$ and $t=1$, so that its constant extension to $t\in\R$ is smooth. However, the polyfold bundle $\widetilde W \to \widetilde Z$ that \cite{HWZgw} constructs for the almost complex manifold $(\widetilde Q, \widetilde J)$ contains already in its base many more (reparameterization classes of) maps $\widetilde u: S^2 \to \C\times Q$ than those of the form $\widetilde u(z)=(t, u(z))$. The point constraint $\widetilde u(z_0)\in\R\times Q$ does not force the $\C$-component to be constant. Similarly, the fibers of $\widetilde W$, consisting of anti-linear 1-forms with values in $\C\times Q$, contain an extra factor $\Lambda^{0,1}(S,\C)$ compared with the fibers $W_\alpha$ above that are used in the polyfold description of a moduli space for fixed $J$.
These infinite dimensional extra factors would have to be split off near maps of the form $(0, u(z))$ or $(1, u(z))$ in order to relate the moduli spaces for $J_0$ and $J_1$ with parts of the moduli space for $\widetilde J$ in this setup. 

Alternatively, extending such a splitting along $(t, u(z))$ for all $t\in\R$ would yield a smaller polyfold description in which the base space consists only of maps of the form $(t,u(z))$. Rather than attempting such a splitting construction, we will directly construct the resulting polyfold bundle.
\end{rmk}

\begin{proof}[Proof of Theorem~\ref{thm:polyfold-structure-bundle}.]
We construct bundle charts as in \cite[\S6.3]{HWZgw}\footnote{Notation here is the same as in \cite{HWZgw}, with lots of simplifications because we work with a constant Riemann surface and trivial isotropy. On the other hand, we allow the almost complex structure to vary in a $1$-parameter family, whereas \cite{HWZgw} fixes it.}.
Fix a pair $(t_0,[u])\in[0,1]\times\cB_\infty$, a representative $u$ of $[u]$, an open interval $I_{t_0}\subset [0,1]$ around $t_0$ (whose `sufficiently small' choice will be specified below), and good data centered at $u$. These choices determine a chart for the base space as in \S\ref{sec:base}, by
$$
I_{t_0} \times \cO \rightarrow [0,1] \times \cB, \qquad
(t,\eta) \mapsto (t, [\exp_u(\eta)]). 
$$
(See \S\ref{sec:base} for the definition of $\cO = E_u \cap U \subset W^{3,2}(S^2,u^*\rT Q)$.)
To build charts for the bundle, recall
that we abbreviate $Q:= \mathbb{CP}^1\times T$. Then, using the chosen representative $u$, the bundle fiber $\cE_{t_0,[u]}$ can be identified with the Hilbert space 
\begin{align}
    F := \Lambda^{0,1}_{J_{t_0}}(S^2,u^*\rT Q)
\cap W^{2,2}(S^2) .
    \label{eq:definition-of-F}
\end{align}
Here $\Lambda^{0,1}_J(\ldots)$ denotes continuous complex-antilinear 1-forms, and to pick out those of class $W^{2,2}$ we view them as functions on $S^2$ with values in a bundle whose fiber over $z\in S^2$ are the linear maps $\rT_z S^2 \to \rT_{u(z)} Q$. 
This Hilbert space can be equipped with a scale structure whose $k$-th level consists of forms of regularity $W^{2+k,2}$. So we can define a trivial strong bundle\footnote{As a set, $\cO \vartriangleleft F$ is the Cartesian product $\cO \times F$. The symbol $\vartriangleleft$ means that we consider it as a strong bundle, which is a condition on the scale structure: For $\eta\in\cO$ on level $m$, i.e.\ of Sobolev class $W^{3+m,2}$, it makes sense to talk about sections along $u':=\exp_u\eta$ of Sobolev class up to $W^{3+m,2}=W^{2+k,2}$, i.e.\ up to level $k=m+1$.}
\begin{align}
    I_{t_0} \times \cO \vartriangleleft F \rightarrow I_{t_0} \times \cO
\end{align}
which we will use as local model for $\cE \rightarrow [0,1] \times \cB$ near the pair $(t_0,[u])$.

To trivialize the bundle, we need to map $J_{t_0}$-antilinear one-forms with values in $u^*\rT Q$ to $J_{t}$-antilinear one-forms with values in $v^*\rT Q$ for pairs $(t,v)$ near $(t_0,u)$. We will do this in two steps, first changing the almost complex structure and then the pullback bundle.
For every $t\in I_{t_0}$ we can define a linear map from the space of antilinear 1-forms with respect to $J_0$ to the space of antilinear 1-forms with respect to $J_t$ (both with values in $u^*\rT Q$),
\begin{align} \label{eq:defining-Kt}
    K_{t} \, : \; F= \Lambda^{0,1}_{J_{t_0}}\left( S^2, u^*\rT Q \right) &\longrightarrow \Lambda^{0,1}_{J_{t}}\left( S^2, u^* \rT Q\right)  \\ \nonumber
   \xi &\longmapsto \tfrac{1}{2} \left( \xi + J_t(u) \circ \xi \circ i \right).
\end{align}

Note that $K_{t_0}$ is the identity map. 
Moreover, the explicit form of $K_t$ allows us to check that this family is smooth with respect to the $W^{2+k,2}$-topology on $\Lambda^{0,1}(\ldots)$ for any $k\in\N_0$.\footnote{For fixed $k$, the continuity of $K_t$ requires $u\in\cC^{2+k}$, which is why we center these charts at smooth points $[u]\in\cB_\infty$.} 
So by choosing $I_{t_0}$ as sufficiently small neighbourhood of $t_0$ we can guarantee that $K_t$ is a linear sc-isomorphism for all $t\in I_{t_0}$.

The rest of the bundle chart construction -- i.e.\ the step from the pullback bundle $u^*\rT Q$ to the pullback bundle $v^*\rT Q$ -- proceeds as in \cite{HWZgw}; we just need to ensure sc-smooth dependence on the extra parameter. We construct a family of connections $\tilde{\nabla}^t$ on $\rT Q$ for $t\in [0,1]$ as follows:\footnote{Any smooth family of complex connections suffices for the present construction. In \S \ref{sec:linearization}, however, we will need $\tilde{\nabla}^0$ to split along the factors of $Q=\CP^1\times T$, see Remark~\ref{rmk:parallel-transport-at-0}.} 
Let $\nabla^t$ denote the Levi--Civita connection of the metric $g_t:=\omega(\cdot,J_t\cdot)$ on $Q$ and define a new connection by $\tilde{\nabla}^t_XY := \tfrac{1}{2}(\nabla^t_XY -J_t\nabla^t_X(J_tY))$. Then the family is smooth in $t$, and $\tilde{\nabla}^t$ is a complex connection on the almost complex vector bundle $(\rT Q,J_t)$, that is, it satisfies
$\tilde{\nabla}^t_X(J_tY) = J_t(\tilde{\nabla}^t_XY)$.
Moreover, recall that the good data used to construct the above chart for $\cB$ included the choice of an open neighbourhood $\tilde{\cO}$ of the zero section of $\rT Q$, such that $\tilde{\cO}$ is fiberwise convex and the exponential map (for a fixed metric on $Q$ that does not vary with $t$) restricts to an embedding on each fiber $\tilde{\cO}_q$, $q\in Q$. Then for a tangent vector $\eta_q \in\tilde{\cO}_q$, consider the geodesic path $[0,1]\rightarrow Q, s\mapsto \exp_q(s\eta_q)$ from $q$ to $p:= \exp_q(\eta_q)$. Parallel transport with respect to $\tilde{\nabla}^t$ along this path defines a $J_t$-complex linear map\footnote{
Note here that parallel transport is defined along any path, so there is no issue with the fact that the path is induced by an exponential map for a different metric than the family of metrics used in the construction of the connection.
} 
\begin{align} \label{eq:gamma}
    \Gamma_t(\eta_q) \, :\;  (\rT_q Q, J_t(q)) \longrightarrow (\rT_pQ, J_t(p)) .
\end{align}
This in fact is an isomorphism for each $t\in[0,1]$ and $\eta_q\in\rT_q Q$, and these isomorphisms vary smoothly with $t\in[0,1]$, $q\in Q$ and $\eta\in TQ$.

The resulting bundle chart covering the chart \eqref{eq:definition-of-chart} of the base space given by $I_{t_0}\times \cO \rightarrow [0,1)\times \cB, (t,\eta)\mapsto (t, [\exp_u\eta])$ is
\begin{align}
    I_{t_0} \times \cO \vartriangleleft F &
    \; \longrightarrow \; \cE  \nonumber \\ 
    (t,\eta,\xi) \; &\longmapsto \; \Big(t, \Big[\big( \,\exp_u\eta \,,\, \Xi(\eta,\xi):= \Gamma_t(\eta)\circ K_t(\xi) \,\big)\Big] \Big) .  \label{eq:definition-bundle-chart}
\end{align}
Here the complex antilinear 1-form $\Xi(\eta,\xi) \in \Lambda^{0,1}_{J_t}(S^2,v^*\rT Q)$ with values in the pullback bundle by $v:= \exp_u \eta$ is given at each $z\in S^2$ by the complex antilinear map
\begin{align*}
    \Gamma_t(\eta(z))  \circ K_{t} (\xi)(z) 
\, :\;  \big(\rT_z S^2,i\big) \to \big(\rT_{u(z)}Q, J_t(u(z))\big) \to \big(\rT_{v(z)}Q, J_t(v(z))\big)  .
\end{align*}
It coincides with the construction in \cite[(3.9)]{HWZgw} in case $K_t=\id$ due to $J$ being fixed.
We can now follow the arguments of \cite[\S3.6]{HWZgw} to construct $\cE$ as tame strong M-polyfold bundle. For that purpose first note that each map \eqref{eq:definition-bundle-chart} covers an M-polyfold chart for $[0,1]\times\cB'\subset [0,1]\times\cB$, so the images of these maps cover only $\cE|_{[0,1]\times\cB'}$. 
Second, these are bundle charts in the sense that they are linear bijections on each fiber. They are local homeomorphisms because the topology on the total space $\cE|_{[0,1]\times\cB'}$ is defined in this manner as in \cite[Thm.1.9]{HWZgw}.
Strong sc-smooth compatibility of these bundle charts is proven for fixed $J$ in \cite[Prop.3.39]{HWZgw}, using the language of \cite[Prop.3.39]{HWZbook}. 
The strong bundle isomorphism $\mu$ that is considered here, and proven to be a local sc-diffeomorphism, in fact encodes all transition maps between different bundle charts. 
The proof of \cite[Prop.3.39]{HWZgw} directly extends to the case of varying $J$ thanks to its explicit nature \eqref{eq:defining-Kt} of $K_t$ as family of linear $0$-th order operators. Here we have to again require $u\in\cC^\infty$ to ensure that $K_t$ is a bounded operator on each of the $W^{2+k,2}$-scales. In fact, these operators vary smoothly with $t\in[0,1]$ on each scale. 
Thus the charts \eqref{eq:definition-bundle-chart} equip $\cE|_{[0,1]\times\cB'}$ with the structure of a strong M-polyfold bundle. Finally, tameness is a condition on the underlying M-polyfold that is automatically satisfied in our case since all retractions are trivial; see \cite[Def.2.17]{HWZbook}.  
\end{proof}

\begin{rmk} \label{rmk:parallel-transport-at-0} \rm 
    For $t=0$, remember that $J_0= i \oplus J_T$ splits along the factors of $Q=\CP^1\times T$. This means that also the metric splits and so does its Levi-Civita connection $\nabla$. Then also the connection $\tilde{\nabla}^0$ splits, and thus the parallel transport map $\Gamma_0$ in \eqref{eq:gamma} preserves the factors of $\rT_{(z,p)}Q = \rT_z \CP^1 \times \rT_p T$.
\end{rmk}

\begin{rmk} \label{rmk:proof a} \rm
An alternative proof of Theorem~\ref{thm:polyfold-structure-bundle} is to construct the bundle $\cE\to[0,1]\times\cB$ in \eqref{eq:definition-of-E} from an implicit function theorem in \cite{ben}.

This proof requires as starting point a polyfold description of the Gromov-Witten moduli space $\cM$ for a family $(J_t)_{t\in[0,1]}$ of almost complex structures. Such a description is obtained by performing the constructions of Theorem~\ref{thm:polyfold-structure-bundle} over $[0,1]\times Z$ to obtain a tame strong M-polyfold bundle $\widetilde p: \widetilde W \to [0,1]\times Z$ which restricts on every slice $\{t\}\times Z$ to the bundle $W\to Z$ from \cite{HWZgw} for the almost complex structure $J_t$.

Now the projection $\cE\to[0,1]\times\cB$ in \eqref{eq:definition-of-E} is obtained from $\widetilde W\to [0,1]\times Z$ by restriction to $\cB= \text{ev}^{-1}(\{p_0\})\subset Z$ as in the first proof of Theorem~\ref{thm:polyfold-structure-base-space}. 
Here the map $[0,1]\times Z \to Q, (t,[u])\mapsto \ev{[u]}=u(z_0)$ is transverse to $\{p_0\}\in Q$ since the evaluation map -- without the $[0,1]$-factor -- is already transverse by \cite[\S1.5]{ben}.
Thus \cite[Theorem~5.10 (II)]{ben} provides an open neighbourhood $\widetilde\cB\subset[0,1]\times(\cB\cap Z_1)$ of $[0,1]\times\cB_\infty$ such that $\widetilde p^{-1}(\widetilde\cB)=\cE|_{\widetilde\cB} \to \widetilde\cB$ inherits the structure of a tame strong M-polyfold bundle. 
In our special case, the shift to $Z_1$-topology is actually not needed for the reasons already stated in Theorem~\ref{thm:polyfold-structure-base-space}, and since the retracts are all trivial. 
Furthermore, $\widetilde\cB$ can be replaced by $[0,1]\times\cB''$ for an open neighbourhood $\cB''\subset\cB$ of $\cB_\infty$ that may just be somewhat smaller than $\cB'$ from Theorem~\ref{thm:polyfold-structure-bundle}.

Indeed, $\widetilde\cB$ is given by a union of charts for $[0,1]\times\cB$ centered at smooth points, which lift to strong bundle charts. Each of these charts is constructed in product form, and since $[0,1]$ is compact we can find for every $b\in\cB_\infty$ finitely many such product charts that cover $[0,1]\times\{b\}$. This implies $[0,1]\times\cU_b \subset \widetilde\cB$ for some open neighbourhood $\cU_b\subset\cB$ of $b$. This proves the claim with $\cB'=\bigcup_{b\in\cB_\infty} \cU_b$.  
\end{rmk}

\subsection{The section}
\label{sec:section}

This section finalizes the polyfold description of the moduli space $\cM=\sigma^{-1}(0)$ by establishing the relevant properties of the section $\sigma:[0,1]\times \cB\rightarrow \cE$ introduced in \eqref{eq:definition-of-sigma}. Up to quotienting by reparameterization in the base, its principal part (given by its values in the fibers) is $(t,u) \mapsto \bar{\partial}_{J_t}u$.

\begin{thm}
Let $\cB'\subset\cB$ be as in Theorem~\ref{thm:polyfold-structure-base-space}\footnote{Recall we expect $\cB'=\cB$ by Remark~\ref{rmk:smaller-base-space-no-problem}.}, then
\label{thm:sc-Fredholm}
    $\sigma:[0,1]\times \cB'\rightarrow \cE$ is a sc-Fredholm section of Fredholm index $1$.
\end{thm}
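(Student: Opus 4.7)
The strategy is to reduce the sc-Fredholm property of $\sigma$ to the analysis of the Cauchy-Riemann operator for a fixed almost complex structure carried out in \cite{HWZgw}, and to account separately for the additional real $t$-parameter.

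In the bundle chart of Theorem~\ref{thm:polyfold-structure-bundle} centered at a smooth point $(t_0, [u]) \in [0,1] \times \cB_\infty$, the principal part of $\sigma$ becomes the map
\begin{equation*}
\Phi : I_{t_0} \times \cO \longrightarrow F, \qquad
\Phi(t, \eta) \;=\; K_t^{-1}\!\left( \Gamma_t(\eta)^{-1} \circ \bar{\partial}_{J_t}\bigl(\exp_u(\eta)\bigr)\right),
\end{equation*}
with $F$ and the trivializing isomorphisms $K_t, \Gamma_t$ as in \eqref{eq:definition-of-F}, \eqref{eq:defining-Kt}, \eqref{eq:gamma}. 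First I would verify sc-smoothness of $\Phi$. For each fixed $t$, the map $\eta \mapsto \Phi(t,\eta)$ is the standard local expression of the Cauchy-Riemann operator in a Gromov-Witten bundle chart, shown to be sc-smooth in \cite{HWZgw}. The families $J_t$, $K_t$, $\Gamma_t$ depend classically smoothly on the finite-dimensional parameter $t\in I_{t_0}$ and enter through composition, pointwise multiplication, and parallel transport -- operations whose sc-smoothness is insensitive to an additional classically smooth finite-dimensional parameter. Sc-smoothness of $\sigma$ on $[0,1]\times \cB'$ then follows from sc-smooth compatibility of the bundle charts.

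Next, at an arbitrary zero $(\tau, [v]) \in \sigma^{-1}(0) = \cM$, taking the chart centered at $(\tau, v)$ so that the zero corresponds to $(t, \eta) = (\tau, 0)$, the linearization splits as
\begin{equation*}
\rD\Phi(\tau, 0)(s, \xi) \;=\; s \cdot \partial_t \Phi(\tau, 0) \;+\; \rD_\eta \Phi(\tau, 0)\,\xi, \qquad (s,\xi) \in \R \times E_{v}.
\end{equation*}
The $\eta$-linearization is the restriction to $E_{v}$ of the standard linearized Cauchy-Riemann operator $\rD_{v}\bar{\partial}_{J_{\tau}} : W^{3,2}(S^2, v^*\rT Q) \to F$, which is sc-Fredholm by \cite{HWZgw}. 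Adding a classically smooth finite-rank contribution in $s \in \R$ (or $s \in [0,\infty)$ at the boundary stratum) preserves the sc-Fredholm property and raises the Fredholm index by $1$. Riemann-Roch gives $\textup{ind}(\rD_{v}\bar{\partial}_{J_{\tau}}) = 2\langle c_1(\rT Q), A\rangle + \dim_\R Q = 4 + 2n$ using $\langle c_1(\rT \CP^1),[\CP^1]\rangle = 2$, while the slicing conditions defining $E_{v}$ have total real codimension $2n+4$: namely $2n$ from $\eta(z_0)=0$ and $2+2$ from $\eta(z_i) \in H_{p_i}$ for $i=1,2$. These slicing conditions are complementary to the infinitesimal action of reparameterizations by $\textup{Aut}(S^2,z_0)$ (as used in \cite{HWZgw}), so restricting to $E_v$ yields a sc-Fredholm operator of index $(2n+4) - (2n+4) = 0$, and therefore $\textup{ind}(\sigma) = 0 + 1 = 1$.

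The main technical obstacle is the sc-smoothness step: the substitution $\eta \mapsto \bar{\partial}_{J_t}(\exp_u\eta)$ involves reparameterization-type operations that are nondifferentiable with respect to any single Sobolev norm but scale-smooth in the sense formalized in \cite[\S2.2]{usersguide}. Once this sc-smoothness is inherited from the fixed-$J$ proofs in \cite{HWZgw}, promoting it to the joint $(t, \eta)$-dependence and computing the index reduce to standard bookkeeping.
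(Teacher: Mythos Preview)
Your argument establishes sc-smoothness of the local representative and the (linear) Fredholm property of the linearization at zeros, together with the correct index computation. However, this does not suffice for the nonlinear sc-Fredholm property of a section in the sense of \cite[Def.~3.8]{HWZbook}. That definition requires three conditions: sc-smoothness, the regularizing property, and---crucially---the sc-Fredholm germ property, i.e.\ the existence of a local contraction germ normal form (basic germ) after subtraction of a sc$^+$-section. You address neither the regularizing property (which follows here from elliptic regularity for $\bar\partial_{J_t}$) nor the germ condition; the latter is the substantive analytic content and cannot be inferred from Fredholmness of the linearization alone. The paper handles this by observing that the local representative $f$ is in fact classically $C^1$ on each scale level, then subtracting the sc$^+$-section $f_0(t,\eta)=K_t^{-1}(\bar\partial_{J_t}u)$ so that $f-f_0$ vanishes at the center, and applying \cite[Thm.~4.5]{W-Fred} to obtain the contraction germ normal form from classical differentiability and the linear sc-Fredholm property of $\rD(f-f_0)(t_0,0)$.

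Relatedly, your last paragraph mislocates the analytic difficulty. The map $\eta\mapsto \bar\partial_{J_t}(\exp_u\eta)$ in a fixed chart involves no reparameterization; it is a composition of the exponential map, differentiation, and pointwise algebraic operations, and is classically smooth from $W^{k+3,2}$ to $W^{k+2,2}$ on each level. The failure of classical differentiability---and hence the need for sc-calculus---occurs only in the transition maps between charts centered at different base points, where variable M\"obius transformations enter. Within a single chart the section is classically differentiable, and it is precisely this classical regularity that the paper exploits to verify the germ condition via \cite{W-Fred}. So your identification of ``the main technical obstacle'' as sc-smoothness of $\Phi$ is off: sc-smoothness is straightforward here, whereas the genuine work lies in the nonlinear germ condition that your outline omits.
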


As for the bundle structure in Remark~\ref{rmk:proof a}, the Fredholm property of the section could be proven by the restriction results of Filippenko \cite{ben} -- if the Fredholm property of the Cauchy-Riemann operator with varying $J$ was firmly established. 
We explain this approach in Remark~\ref{rmk:proof a section}, after giving a direct proof of the Fredholm property based on the explicit bundle charts in Theorem~\ref{thm:polyfold-structure-bundle}.

\begin{proof}[Proof of Theorem~\ref{thm:sc-Fredholm}]
We work in local coordinates centered at a pair $(t_0,[u])\in[0,1]\times\cB_\infty$ that were defined in \eqref{eq:definition-bundle-chart}. The principal part of the section $\sigma$ is given in these coordinates by
\begin{align*}
 f: \;   I_{t_0} \times \cO \; \longrightarrow \; F , \quad
    (t,\eta) \; &\longmapsto \;  K_t^{-1}\bigl(\Gamma_t(\eta)^{-1}( \bar{\partial}_{J_t} \exp_u\eta )\bigr) ,
\end{align*}
with the family of sc-isomorphisms $K_t : F \to\Lambda^{0,1}_{J_{t}}\left( S^2, u^* \rT Q\right) $ given in \eqref{eq:defining-Kt}, and parallel transport $\Gamma_t(\eta)$ as in \eqref{eq:gamma}. Both of these are linear and explicitly given in terms of point-wise operations. Thus routine computations show that $f$ is not just sc-smooth, but for any $k\in\N_0$ restricts to a classically smooth map with respect to the $W^{k+3,2}$-norm on $\cO$ and the $W^{k,2}$-norm on $F$. (For the present proof, it suffices to check continuous differentiability.)
We can moreover see that the section $\sigma$ has classical Fredholm linearizations at any zero $(t_0,[u])\in \sigma^{-1}(0)$. Indeed, in the coordinates centered at a point with $\bar{\partial}_{J_{t_0}} u =0$ we have $\rd f(t_0,0)(T,\zeta): \R\times E_u \to F$ given by 
\begin{equation}\label{eq:linearization}
\rd f(t_0,0)(T,\zeta) \;=\; - \tfrac 12 T \cdot J_{t_0} (\partial_t J_t)|_{t=t_0} \partial_{J_{t_0}}u
\;+\; (\rD_u \bar{\partial}_{J_{t_0}})\zeta,
\end{equation}
with $E_u$ given in \eqref{eq:definition-of-E_u}. 
The first part, $T \mapsto - \tfrac 12 T \cdot J_{t_0} (\partial_t J_t)|_{t=t_0} \partial_{J_{t_0}}u$, is a bounded linear operator $\R\to F$ with respect to any $W^{\ell,2}$-norm on $F$, hence it is compact with respect to the $W^{2,2}$-norm on $F$. The second part is a restriction of the classical Cauchy-Riemann operator $\rD_u \bar{\partial}_{J_{t_0}}: X:= W^{3,2}(S^2,u^*\rT Q) \to F$. 
This classical operator is known to be Fredholm, see e.g.\ \cite[Thm.C.1.10]{McDuffSalamon2}, and restriction to the finite codimension subspace $E_u\subset X$ preserves the Fredholm property. This shows that $\rd f(t_0,0)$ is classically Fredholm with index given by the index of the restriction $\rD_u \bar{\partial}_{J_{t_0}}|_{E_u}$ plus the dimension of the domain $\R$ of the compact factor. 
The index of $\rD_u \bar{\partial}_{J_{t_0}}$ is $2n +2 c_1([\mathbb{CP}^1\times \{\textup{pt}\}])$ by the Riemann-Roch Theorem. 
The Chern number can be computed with respect to any compatible almost complex structure on $Q$, and for $J_0=J_{\CP^1}\oplus J_T$ we have
\begin{align*}
    c_1([\mathbb{CP}^1\times \{\textup{pt}\}])
    &= \int_{\mathbb{CP}^1\times \{\textup{pt}\}} c_1(\rT Q,J_0) \\
    &= \int_{\mathbb{CP}^1} c_1(\rT \CP^1,J_{\CP^1}) +\int_{\{\textup{pt}\}} c_1(\rT T,J_T) \;=\; 2 + 0 . 
\end{align*}
Finally, restricting an operator to a subspace reduces the Fredholm index (via a mix of its effects on kernel and image) by the codimension of the subspace. In this case, $E_u\subset W^{3,2}(S^2,u^*\rT Q)$ has codimension $2n+4$ since it is given by the codimension $2n$ condition $\eta(z_0)=0$ and the two codimension $2$ conditions $\eta(z_i)\in H_{p_i}$. Thus we obtain the claimed Fredholm index
$$
\text{index}(\rd f(t_0,0)) \;=\; 1 + 2n + 2c_1([\mathbb{CP}^1\times \{\textup{pt}\}]) - 2n - 4 \;=\; 1. 
$$
This also establishes the classical Fredholm property of the section $\sigma$ in local coordinates. 
The nonlinear sc-Fredholm property of polyfold theory, however, demands more than just the linearizations being sc-Fredholm.\footnote{The linear sc-Fredholm property \cite[Def.~1.8]{HWZbook} is a direct analogue of the classical linear Fredholm property -- kernel and image need to have complements that respect the sc-structure, these complements need to be sc-isomorphic, and kernel and cokernel should be finite dimensional.} The sc-Fredholm property of a section as defined in \cite[Def.~3.8]{HWZbook} requires three conditions. The first, sc-smoothness, follows from the classical smoothness in local coordinates. The second condition, $\sigma$ being regularizing, means that if $(t,v)\in [0,1]\times W^{3+m,2}$ with $\bar{\partial}_{J_t} v \in W^{2+m+1,2}$, then $(t,v)\in[0,1]\times W^{3+m+1,2}$. This follows from the corresponding property of $\partial_{J_t}$ for fixed $t$ -- which is known from elliptic regularity. The third condition seems less transparent but is very important for the implicit function theorem in scale calculus: At every smooth point $(t_0,[u])\in[0,1]\times\cB_{\infty}$, the section needs to have the sc-Fredholm germ property \cite[Def.~3.7]{HWZbook}, that is, after subtraction of a local $sc^+$-section, (a filled version of)\footnote{In our setting, there is no need for a filling since all retractions are trivial.} the germ of $\sigma$ needs to be conjugate to a basic germ as defined in \cite[Def.~3.6]{HWZbook}. The latter means that after splitting off a finite dimensional factor from the domain and projecting to the complement of a finite dimensional factor in the image, the germ is the identity plus a contraction mapping.
It is this third property that implies that linearizations of a sc-Fredholm section are sc-Fredholm operators, see \cite[Prop.~3.10]{HWZbook}. The index is then defined as the Fredholm index of the linearization at the lowest level of the scale structure, which we computed above to be $1$.

To establish the equivalence to a contraction germ normal form, we proceed similar to \cite[Prop.~4.26]{HWZgw}, using the fact that the section is classically differentiable in all but finitely many directions. In fact, the local representative $f$ above is continuously differentiable in all directions, and thus satisfies the conditions of being sc-Fredholm with respect to the trivial splitting $E_u\cong \{0\}\times E_u$, as defined in \cite[Def.~4.1]{W-Fred}.
Indeed, we already established the regularizing property (i). The differentiablity conditions (ii) in the trivial splitting follow from classical continuous differentiability. Besides, in this setting the linearized sc-Fredholm property (iii) is only required of $\rD_0 f$ -- though for any (not necessarily holomorphic) base point $(t_0,[u])$. 
We can make up for the latter complication by subtracting from $f$ the sc$^+$-section 
\begin{align*}
 f_0 : \;   I_{t_0} \times \cO \; \longrightarrow \; F , \quad
    (t,\eta) \; &\longmapsto \;  K_t^{-1}\bigl( \bar{\partial}_{J_t} u \bigr) , 
\end{align*}
which takes the same value at $(t_0,0)$ as $f$. Thus the linearization of $f-f_0$ at $(t_0,0)$ is well defined, and it can be computed as follows:
$$\rd (f-f_0)(t_0,0)(T,\zeta) \;=\; T \cdot \partial_t(\Gamma_t(\eta)^{-1})(\bar{\partial}_{J_{t_0}}u)
\;+\; \rD_0\Gamma_t^{-1}(\zeta)(\bar{\partial}_{J_{t_0}}u)
\;+\; \rD_u \bar{\partial}_{J_{t_0}}\zeta.$$ 
To check that this is a linear sc-Fredholm operator we use the conditions of \cite[Def.~3.1]{W-Fred}. (i) It is bounded on each level of the scale structure. (ii) It is regularizing by elliptic regularity for the linearized Cauchy-Riemann operator. Lastly, we have to check in (iii) the classical Fredholm property on the lowest level of the scale structure. Note that the first two summands actually are bounded with respect to the $W^{3,2}$-norm on $F$, and thus induce compact operators to $F$ with the $W^{2,2}$-topology. Thus the Fredholm property again follows from the corresponding property of the linearized Cauchy-Riemann operator. 
All in all, we have shown that the section $f-f_0$ in local coordinates satisfies all conditions of \cite[Thm.~4.5]{W-Fred}, which implies its contraction germ normal form.\footnote{Strictly speaking, the statement of this theorem shifts the scale structure. We can avoid this by observing that all the differentiability and linearized Fredholm properties of $f:\R\times E_u\to F$ persist w.r.t.\ the $W^{2,2}$-topology on $E_u$ and the $W^{1,2}$-topology on $F$.
} 
This finishes the proof. 
\end{proof}

\begin{rmk} \label{rmk:proof a section} \rm
An alternative proof of Theorem~\ref{thm:sc-Fredholm} is to combine an implicit function theorem from \cite{ben} with a polyfold description of the Gromov-Witten moduli space $\cM$ for a family $(J_t)_{t\in[0,1]}$ of almost complex structures. Such a description is obtained as follows. First, one follows Remark~\ref{rmk:proof a} to construct an M-polyfold bundle $\widetilde p: \widetilde W \to [0,1]\times Z$ which restricts on every slice $\{t\}\times Z$ to the bundle $W\to Z$ from \cite{HWZgw} for the almost complex structure $J_t$.
Then, the section $\overline\partial: [0,1]\times Z\to \widetilde W, (t,[u])\mapsto \bigl(t, \bigl[(u,\overline\partial_{J_t} u)\bigr]\bigr)$ is shown to be sc-Fredholm by following the arguments of \cite[Thm.4.6]{HWZgw}\footnote{To generalize the Fredholm analysis in \cite{HWZgw} to allow for a finite dimensional family of almost complex structures, note that this introduces an extra factor in the domain -- in our case $[0,1]$ -- along which the section is classically smooth. Since it is finite dimensional, it can also be split off when constructing the contraction germ normal form. 
}
or our proof of Theorem~\ref{thm:sc-Fredholm}. 
Given such a description, we claim that the sc-Fredholm property is preserved when we restrict from $[0,1]\times Z$ to the preimage $[0,1]\times \cB= \widetilde\ev^{-1}(\{p_0\})$ of the submanifold $\{p_0\}\subset Q$ under the evaluation map $\widetilde\ev:[0,1]\times Z \to Q, (t,[u])\mapsto \ev([u])$. 
For that purpose we can again quote the results by Filippenko: \cite[\S5.1]{ben} explains why the Cauchy-Riemann section, the evaluation map and the submanifold $\{p_0\}\subset Q$ satisfy all compatibility conditions of \cite[Theorem 5.10 (III)]{ben}. 
A direct application of that result asserts that $\sigma|_{\widetilde\cB} : \widetilde\cB \rightarrow W|_{\widetilde\cB}$ is sc-Fredholm for some open subset $\widetilde\cB\subset[0,1]\times\cB_1$ containing $[0,1]\times\cB_\infty$.
However, the shift in topology is again not needed since the evaluation map is classically smooth on all levels. 
Furthermore, as in Remark~\ref{rmk:proof a}, the open subset $\widetilde\cB\subset[0,1]\times\cB$ can be replaced by $[0,1]\times\cB'''$ for an open neighbourhood $\cB'''\subset\cB$ of $\cB_\infty$ that may just be smaller than $\cB'$ from 
Theorem~\ref{thm:polyfold-structure-bundle}.
\end{rmk}

\subsection{Linearization}
\label{sec:linearization}

The goal of \S\ref{sec:transversality-at-boundary} will be to prove transversality of the unperturbed section $\sigma$ at $t=0$. 
For that purpose we will need to consider its linearization at the unique solution $[u_0]\in\cB$ for $t=0$ (see \S\ref{sec:unique-J0-curve}). In fact, it will be sufficient to consider the linearization of $\sigma_0:=\sigma (0, \cdot) : \cB \rightarrow \cE|_{\{0\}\times\cB}$ at $[u_0]\in\cB$, which is what we will compute now.
We do the computation in a local chart for the restricted bundle $\cE|_{\{0\}\times\cB}$ centered at $[u_0]$, given by
$$
    \cO \times F \;\to\; \cE|_{\{0\}\times\cB} , \qquad
    (\eta,\xi) \;\mapsto\; \left(\, 0 \,,\, [ \exp_{u_0}\eta , \Gamma_t(\eta) \circ\xi ] \, \right) .
$$
This chart is obtained from the chart \eqref{eq:definition-bundle-chart} for the bundle $\cE \rightarrow [0,1]\times \cB$ centered at $(0,[u_0])$ as in \S\ref{sec:bundle}, by restriction to $\cO \times F \cong \{ 0 \} \times \cO \vartriangleleft F \subset [0,1] \times \cO \vartriangleleft F$. Here $\cO\subset E_{u_0}$ is an open subset of the vector space $E_{u_0}$ given in \eqref{eq:definition-of-E_u0}, $F$ is defined in \eqref{eq:definition-of-F}, the exponential map is induced by a fixed metric chosen as part of the good data in the second proof of Theorem~\ref{thm:polyfold-structure-base-space}, and $\Gamma_t$ is defined in \eqref{eq:gamma}.
The map $K_t$ from \eqref{eq:defining-Kt} does not show up here because we have $t=t_0=0$ and so $K_t$ is the identity map.
In this chart, the restricted section $\sigma_0$ from \eqref{eq:definition-of-sigma} is given by
$$
    \bar{\sigma}_0 \,:\; \cO \;\to\;  \cO \times F ,\qquad \eta \;\mapsto\; \bigl(  \eta \,, \, 
     \Gamma_0 ( \eta ) ^{-1} \circ \bar{\partial}_{J_0} \big( \exp_{u_0} \eta \big) \,\bigr).
$$
Since $u_0$ is the center of the chart, it corresponds to $\eta =0$.
So the linearized operator $\rD_{u_0} \sigma_0$ in the coordinates of this chart is represented by
$\rd \bar{\sigma}_0 (0) : E_{u} \to F$, which we can compute for $\hat{\eta}  \in E_{u}$ as follows: 
\begin{align*}
\rd \bar{\sigma}_0 (0) (\hat{\eta}) 
&\;=\; \frac{d}{d\theta}\Big\vert_{\theta =0} \xi(\theta\hat{\eta})  
\;=\;\frac{d}{d\theta}\Big\vert_{\theta =0} 
    \underbrace{\Gamma_0 \big(\theta\hat{\eta} \big)^{-1}}_{=:A_{\theta}} 
    \circ   
    \underbrace{\bar{\partial}_{J_0} \big( \exp_{u_0}\theta\hat{\eta} \big) }_{=:\mu_{\theta}} \\
&\;=\; \underbrace{A_{0}}_{=\id} \circ \frac{d}{d\theta}\Big\vert_{\theta =0} \mu_{\theta} + \frac{d}{d\theta}\Big\vert_{\theta =0} A_{\theta}\circ \underbrace{\mu_{0}}_{= 0}
\;=\; \tfrac{1}{2}\left(\nabla\hat{\eta}+J_0(u_0)\circ\nabla\hat{\eta}\circ i\right). 
\end{align*}
Here $\nabla$ denotes the Levi-Civita connection on $Q$ corresponding to the metric $g_0$ that is compatible with $J_0$.
It is $A_0 = \Gamma_t \left( 0 \right)^{-1} =\id_{u_0^*\rT Q}$, as for every $z\in S^2$ it is (the inverse of) parallel transport from $u_0(z)$ to itself via the constant path,
and $\mu_{0} = \bar{\partial}_{J_0} \left( \exp_{u_0} 0 \right) = \bar{\partial}_{J_0} u_0=0$ since $u_0$ is $J_0$-holomorphic. In the last step we used a formula from \cite[Prop. 3.1.1]{McDuffSalamon2} and again the fact that $u_0$ is $J_0$-holomorphic.
Thus we have shown that the polyfold-theoretic linearization $\rD_{u_0}\sigma_0$ in an appropriate bundle chart is given by a restriction of the classical Cauchy-Riemann operator of the complex bundle $(u_0^*\rT Q, J_0(u_0))$,
$$
\rd \bar{\sigma}_0 (0) : E_{u} \;\to\; F, \qquad \hat\eta \;\mapsto\; \tfrac{1}{2}\left(\nabla\hat{\eta}+J_0(u_0)\circ\nabla\hat{\eta}\circ i\right) \;=\;  \rD_{J_0(u_0)} \hat\eta  . 
$$
Indeed, this operator differs from the classical Cauchy-Riemann operator $\rD_{J_0(u_0)}: X \to F$ as in \cite[Rmk.C.1.2]{McDuffSalamon2} only by the domain $E_{u_0}$ being a subspace of $X:= W^{3,2} ( S^2 , u_0^* \rT Q)$.

\subsection{Transversality at the boundary}
\label{sec:transversality-at-boundary}

The last missing ingredient for the proof of Gromov's nonsqueezing Theorem~\ref{thm-intro:J1-sphere} in \S\ref{sec:applying-regularization-scheme}
is to show that the unique $J_0$-holomorphic curve is cut out transversely. 

\begin{rmk}\rm
The following proof is a first instance of the general principle ``classical transversality implies polyfold transversality''. The core difference between the two notions is that, classically, one usually proves surjectivity of a linearized Cauchy-Riemann operator $\rD: X\to F$ on a tangent space $X=\rT_{u_0}\mathcal{X}$ to a space $\mathcal{X}$ of all maps (in a given homology class, etc.). In our case (ignoring the homology class and point constraint $u(z_0)=p_0$), this total space would be $\mathcal{X}= W^{3,2} ( S^2 , Q)$). This space still carries the action of a group of reparameterizations. In our case a group $\text{Aut}$ of automorphisms of $S^2$ acts on $\mathcal{X}$, and preserves the Cauchy-Riemann operator, so that the tangent space of its orbit lies in the kernel, $\rT_{u_0} \{ u_0\circ\phi \,|\, \phi\in\text{Aut} \}\subset \ker\; \rD$. 

In contrast, the polyfold setup works with the quotient space $\cB=\mathcal{X}/\text{Aut}$ whose tangent space $\rT_{[u_0]}\cB$ is represented by a subspace $E_{u_0} \subset X= W^{3,2} ( S^2 , u_0^* \rT Q)$. 
So, the main challenge in deducing surjectivity of the linearized polyfold section $\rD_{[u_0]}\sigma = \rD|_{E_{u_0}}$ from surjectivity of the classical Cauchy-Riemann operator $\rD$ is in showing that this quotient construction results in a splitting of the total space $X = E_{u_0} + A$ with a complement $A\subset\ker\; \rD$ that represents the infinitesimal action of reparameterizations. 
\end{rmk}

\begin{thm}
    \label{thm:transverslity-at-0}
    $\sigma$ is transverse to the zero section at $t=0$.
\end{thm}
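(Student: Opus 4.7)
The plan is to show surjectivity of the linearization of $\sigma_0 := \sigma(0,\cdot)$ at the unique zero $[u_0] \in \cB$ (see Lemma~\ref{lem:only-one-J0-curve}); this suffices because adding the $t$-direction can only enlarge the image of the full linearization of $\sigma$ at $(0,[u_0])$. By the explicit computation in \S\ref{sec:linearization}, in the bundle chart centered at $u_0$ with the stabilization data from Remark~\ref{rmk:good-data-at-u_0}, this linearization is represented by the restriction of the classical Cauchy-Riemann operator $\rD := \rD_{J_0(u_0)} : E_{u_0} \to F$, where $E_{u_0}$ is as in \eqref{eq:definition-of-E_u0}.

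Since $u_0(z) = (z, m_0)$ and $J_0 = i \oplus J_T$ splits along the two factors of $Q = \CP^1 \times T$, the pullback bundle splits as $u_0^*\rT Q \cong \rT\CP^1 \oplus \underline{\rT_{m_0}T}$, where the first summand uses the identification $u_0|_{\CP^1} = \id$, and the second is the trivial complex bundle of rank $n-1$. By Remark~\ref{rmk:parallel-transport-at-0}, the complex connection $\tilde\nabla^0$ respects this splitting, so the operator decomposes as $\rD = \rD_1 \oplus \rD_2$, where $\rD_1$ is the Cauchy-Riemann operator on $\rT\CP^1 \cong \cO(2)$ and $\rD_2$ is the standard $\bar\partial$-operator on the trivial bundle $\underline{\C^{n-1}} \to \CP^1$. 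Both have vanishing cokernel by Riemann-Roch: $\ker \rD_1$ is the space of infinitesimal M\"obius vector fields (complex dimension $3$), and $\ker \rD_2$ consists of constant $\rT_{m_0}T$-valued functions (complex dimension $n-1$).

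Writing $\eta = (\eta_1,\eta_2) \in E_{u_0}$ according to the splitting, the defining constraints of $E_{u_0}$ translate to $\eta_1(z_0) = \eta_1(z_1) = \eta_1(z_2) = 0$ together with $\eta_2(z_0) = 0$ (while $\eta_2(z_1),\eta_2(z_2)$ are unconstrained). If $\rD\eta = 0$, then $\eta_1$ is an infinitesimal M\"obius vector field vanishing at three distinct points and hence identically zero (by the standard three-point rigidity of $\mathrm{PSL}(2,\C)$, equivalently by the fact that sections of $\cO(2)$ with three zeros vanish), while $\eta_2$ is a constant function vanishing at $z_0$ and so is also zero. Thus $\ker(\rD|_{E_{u_0}}) = 0$.

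Finally, either by Theorem~\ref{thm:sc-Fredholm} (restricted to $\{0\}\times\cB$ drops the index by $1$) or by direct Riemann-Roch (real index $2 + 2c_1(\rT\CP^1) + 2(n-1) = 2n+4$ on the unrestricted domain, minus the real codimension $2n+4$ of the linear conditions defining $E_{u_0}$), the restricted operator $\rD|_{E_{u_0}}$ has Fredholm index $0$. Triviality of the kernel thus forces triviality of the cokernel, which gives the desired surjectivity. The main subtlety will be verifying the splitting $\rD = \rD_1 \oplus \rD_2$ via Remark~\ref{rmk:parallel-transport-at-0}; once this is in hand, the three vanishing conditions at $z_0, z_1, z_2$ are designed by the choice in Remark~\ref{rmk:good-data-at-u_0} precisely so as to kill the full $6$-real-dimensional M\"obius kernel.
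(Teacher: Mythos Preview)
Your proof is correct and reaches the same conclusion as the paper, but via a dual route. The paper establishes surjectivity of the restricted operator $\rD|_{E_{u_0}}$ directly: after showing the unrestricted $\rD$ is onto (same Riemann--Roch splitting argument as yours), it exhibits a complement $V = \rT_{\id}\mathrm{Aut}(S^2)\times\{\text{constants}\}$ with $X = E_{u_0} + V$ and $V\subset\ker\rD$, so the image is unchanged by restriction. You instead compute $\ker(\rD|_{E_{u_0}})=0$ and combine this with the Fredholm index being $0$ to deduce surjectivity. Your kernel computation (M\"obius vector field with three zeros, constant with one zero) is exactly the infinitesimal version of the paper's explicit complement construction, so the two arguments are really the same linear algebra viewed from opposite ends. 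Your approach is slightly more economical; the paper's has the advantage of making the geometric meaning of the slicing conditions (killing exactly the reparametrization directions) more visible. One minor point: the operator in \S\ref{sec:linearization} is written in terms of the Levi--Civita connection $\nabla$ of $g_0$ rather than $\tilde\nabla^0$, but Remark~\ref{rmk:parallel-transport-at-0} already notes that $\nabla$ itself splits, so your citation is fine.
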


\begin{proof}
Since $u_0$ is the only solution for $t=0$ (see Lemma~\ref{lem:only-one-J0-curve}), for checking transversality of $\sigma$ at the boundary $t=0$ it suffices to consider the linearization of $\sigma$ at $(0,u_0)$ and show that it is surjective.
For this it is sufficient to show that the linearization of $\sigma_0:=\sigma(0,\cdot)$ is surjective.
In \S\ref{sec:linearization} we computed this linearization $\rD_{u_0} \sigma_0$ in a local chart centered at $u_0\in\cB$ to be the restriction
$\rd\overline{\sigma}_0(0) = \rD_{J_0(u_0)}|_{E_{u_0}}: E_{u_0}\to F$ of the classical Cauchy-Riemann operator $\rD_{J_0(u_0)}: X \to F$ to the subspace $E_{u_0} \subset X= W^{3,2} ( S^2 , u_0^* \rT Q)$ given by \eqref{eq:definition-of-E_u0}. Its codomain $F$ is defined in  \eqref{eq:definition-of-F}. 
We will first show that this classical operator is surjective; see also \cite[Lemma~5.5]{Wendl}. 

Recall from \S\ref{sec:unique-J0-curve} that $u_0: S^2 \to Q=\CP^1\times T, z \mapsto (z,m_0)$ is the product of the identifying map $S^2\cong\CP^1$ and a constant map to the torus. Thus the complex structure along $u_0$ splits $J_0(u_0)= J_{\CP^1}\oplus J_T(m_0)$ into the standard complex structure $J_{\CP^1}$ on $\rT \CP^1 \cong \id^*\rT\CP^1$ and the constant complex structure $J_T(m_0)= J_{\text{st}}$ on $\rT_{m_0}T=\R^{2n-2}$. 
Thus we have a natural splitting of complex vector bundles
\begin{align*}
   u_0^* (\rT Q , J_0) \;=\; ( \rT \CP^1 , J_{\CP^1}) \oplus E_0^{(n-1)} , 
\end{align*}
where $E_0^{(n-1)}$ is the trivial complex bundle of rank $n-1$ over $S^2$ with fibers $(\R^{2n-2}, J_{\text{st}})$. 
The domain of the Cauchy-Riemann operator thus splits into
\begin{align*}
    X \;=\; W^{3,2} \left( S^2 , u_0^* \rT \left( \CP^1 \times T \right) \right)
    \;=\; \underbrace{W^{3,2} (S^2 , \rT \CP^1 )}_{=:X_{\CP^1}} \times \underbrace{W^{3,2} (S^2 , E_0^{(n-1)} )}_{=: X_T}, 
\end{align*}
and, analogously, the codomain splits $F=F_{\CP^1}\times F_T$ into the $W^{2,2}$-closures of smooth complex antilinear 1-forms on $S^2$ with values in $\rT\CP^1$ resp.\ $E_0^{(n-1)}$. This shows that the classical Cauchy-Riemann operator splits
$$
\rD_{J_0(u_0)} =  \rD_{J_{\CP^1}} \oplus \rD_{J_{\text{st}}} 
\,:\; X = X_{\CP^1}\times X_T \;\to\; F=
F_{\CP^1}\times F_T 
$$
into the classical Cauchy-Riemann operator $\rD_{J_{\CP^1}}:X_{\CP^1}\to F_{\CP^1}$ of the complex line bundle $( \rT \CP^1 , J_{\CP^1})$ over $\CP^1\cong S^2$ and the Cauchy-Riemann operator $\rD_{J_{\text{st}}}:X_T\to F_T$ of the trivial bundle $E_0^{(n-1)}$ over $S^2$. The latter splits further $\rD_{J_{\text{st}}}=\rD_i \oplus \ldots\oplus \rD_i$ into $n-1$ copies of the Cauchy-Riemann operator $\rD_i$ of the trivial complex line bundle $E_0^{(1)} = \C\times S^2\to S^2$. 
These complex line bundles satisfy $c_1(\rT\CP^1)+2\chi(S^2) =2 + 2\cdot 2 >0$ resp.\ $c_1(E_0^{(1)})+2\chi(S^2) = 0 + 2\cdot 2 >0$, so the Riemann-Roch Theorem 
(e.g. \cite[Thm.C.1.10.(iii)]{McDuffSalamon2}) guarantees that the corresponding Cauchy-Riemann operators are surjective. 
This proves surjectivity of the product $\rD_{J_0(u_0)}$ of these surjective operators arising from complex line bundles. 

Towards proving surjectivity of the restriction $\rd\overline{\sigma}_0(0) = \rD_{J_0(u_0)}|_{E_{u_0}}: E_{u_0}\to F$, 
note the above surjectivity means $\rD_{J_0(u_0)} (X) = F$.
If we can now show that
$X=E_{u_0} + V$ can be written as the sum of $E_{u_0}$ and a subspace of the kernel $V\subset \ker\, \rD_{J_0(u_0)}$, then it follows that $\rD_{J_0(u_0)} (E_{u_0}) = \rD_{J_0(u_0)}(E_{u_0}+ V) =F$, so the restriction $\rd\overline{\sigma}_0(0) = \rD_{J_0(u_0)}|_{E_{u_0}}$ is surjective as well.
To find this subspace $V$ note that our choice of transverse hypersurfaces in Remark~\ref{rmk:good-data-at-u_0} was made such that the vector space $E_{u_0}$ defined in  \eqref{eq:definition-of-E_u0} splits as
$E_{u_0} = E_{\CP^1} \times E_{T}$, 
where
$$
    E_{\CP^1} \;=\; \lbrace \eta \in X_{\CP^1} \mid \eta (z_i) = 0 \text{ for } i= 0,1,2  \rbrace , \qquad
    E_{T} \;=\; \lbrace \eta \in X_T \mid \eta (z_0) = 0  \rbrace . 
$$
So we can construct $V\subset X$ as product $V := V_{\CP^1} \times V_T$ of subspaces satisfying
\begin{itemize}
    \item[(a)] $X_{\CP^1} = E_{\CP^1} + V_{\CP^1}$ and $ \rD_{J_{\CP^1}} ( V_{\CP^1} ) = 0$,
    \item[(b)] $X_T = E_{T} + V_T$ and $\rD_{J_{\text{st}}} ( V_T ) = 0 $.
\end{itemize}
To meet these requirements, we choose the subspaces of holomorphic sections 
$$
    V_{\CP^1} \,:=\; \rT_{\mathrm{id}} \mathrm{Aut} (S^2) \quad \text{and}\quad 
    V_T \,:=\; \{ \eta_T : S^2 \to \rT_{m_0} T \;\text{ constant} \} .
$$
With this choice it is easy to verify (b): Every $\eta \in X_T$ is the sum of the constant vector field taking the value $\eta (z_0)$ and the vector field $\eta (\cdot) - \eta(z_0) \in E_{T}$. Moreover, $\rD_{J_{\text{st}}} ( V_T ) = 0 $
holds since all the elements of $V_T$ are constant.\\
To verify (a), first recall that ${\rm Aut}(S^2)$ is the group of biholomorphisms $S^2\to\CP^1$, i.e.\ solutions $\psi:S^2\to\CP^1$ of the nonlinear Cauchy-Riemann equation $\overline\partial_{J_{\CP^1}}\psi=0$. So the tangent space $\rT_{\mathrm{id}} \mathrm{Aut} (S^2)$ to this finite dimensional family of holomorphic maps at the identity map is a subspace of the kernel of the linearized Cauchy-Riemann operator $\ker\,\rD_{J_{\CP^1}}\subset X_{\CP^1}$. In particular, $V_{\CP^1}=\rT_{\mathrm{id}} \mathrm{Aut} (S^2)$ is a subspace of the space of smooth sections $\cC^\infty(S^2,\rT \CP^1) \subset X_{\CP^1}=W^{3,2} (S^2 ,\rT \CP^1 )$ since these tangent vectors are obtained as derivatives of paths of maps in $\mathrm{Aut} (S^2)$, 
\begin{align*}
    \rT_{\mathrm{id}} \mathrm{Aut} (S^2) =
     \bigl\{ \tfrac{\rd}{\rd t} \big|_{t= 0} \gamma(t)\,
     \big| \;\gamma \colon (-\epsilon, \epsilon) \to \mathrm{Aut}(S^2) \text{ with } \gamma(0) = \mathrm{id} \bigr\} . 
\end{align*}
These derivatives take values $\tfrac{\rd}{\rd t} \big|_{t= 0} \gamma(t) (z) \in \rT_{\gamma(0)(z)}\CP^1 = \rT_z \CP^1$ at $z\in S^2$. So, strictly speaking, they are sections of the pullback bundle $\text{id}^*\rT \CP^1$ under the identification  $\text{id}: S^2\overset{\cong}{\to}\CP^1$.  
More concretely, the automorphisms of $S^2 =\mathbb{C}\cup\{\infty\}$ are of the form $\infty \neq z \mapsto \frac{az +b}{cz+d}$ and $\infty\mapsto \frac ac$
with $a,b,c,d \in \C$ such that $ad-bc \neq 0$. Thus, each $\gamma \colon (-\epsilon, \epsilon) \to \mathrm{Aut}(S^2)$ is given by differentiable functions $a,b,c,d: (-\epsilon, \epsilon)\to\C$ that satisfy $a(0)=1=d(0)$, $b(0)=0=c(0)$ and $a(t)d(t)-b(t)c(t) \neq 0$ for all $t$.
We can compute their derivative at any $z\in S^2\setminus\{\infty\}$,
\begin{align*}
    \frac{\rd}{\rd t} \bigg\vert_{t= 0} \gamma(t)(z)
    \;=\; \frac{\rd}{\rd t} \bigg\vert_{t= 0} \frac{a(t)z +b(t)}{c(t)z+d(t)} 
     & \;=\; \frac{\dot{a}(0) z + \dot{b}(0)}{0 z + 1}  -  \frac{(1z + 0)(\dot{c}(0) z + \dot{d}(0))}{( 0 z + 1 )^2}
     \\
    &\;=\; z \dot{a}(0) + \dot{b}(0) - z^2 \dot{c}(0) - z \dot{d}(0).  
\end{align*}
Conversely, any choice of numbers $A,B,C,D \in \C$ induces a tuple of functions  $(-\epsilon, \epsilon)\to\C$ as above, given by $a(t)=1+tA$,  $b(t)=tB$, $c(t)=tC$, $d(t)=1+tD$. These define a section\footnote{Since $a(t)d(t)-b(t)c(t)\neq 1$ is an open condition and satisfied at $t=0$, by continuity it is satisfied for all small enough $t$. 
}
$\eta_V =\frac{\rd}{\rd t} \big|_{t= 0} \gamma(t) \colon S^2 \to \rT S^2$ in $X_{\CP^1}$, which on $S^2\setminus\{\infty\}\simeq \C$ is of the form\footnote{This computation uses the chart $\CP^1\setminus\{\infty\}\cong\C$. To compute the value of $\eta_V$ at $\infty\in\CP^1$, we would have to consider a chart near $\infty$. This is not needed here if we just avoid putting a marked point at $\infty$.}
\begin{equation}
    \eta_V(z) \;=\; z ( A-D) + B - z^2 C \;\in\; \C \cong \rT_z\CP^1 .
    \label{eq:description-of-A_CP1}
\end{equation}
Now, to prove (a), given any $\eta \in X_{\CP^1}$, we need to find $\eta_E \in E_{\CP^1}$ and $\eta_{V}\in V_{\CP^1}$ with $\eta =\eta_E +\eta_{V}$.
Since $\eta_E$ needs to satisfy $\eta_E (z_i)=0$ for $i=0,1,2$, we need $\eta_V$ to agree with $\eta$ on all marked points.
Without loss of generality, we can assume that $z_0 = 0$, and by Remark~\ref{rmk:good-data-at-u_0} we are free to choose the points $z_1,z_2\in S^2$ as we like. We choose $z_1 := 1$ and $z_2 := -1$. Then, given $\eta \in X_{\CP^1}$ the requirements $\eta_V(z_i)=\eta(z_i)$ translate by \eqref{eq:description-of-A_CP1} into 
\begin{align*}
B  &\;=\;\eta_V (0)  \;=\;  \eta(0),\\
A + B-  C - D &\;=\;\eta_V (1)  \;=\;  \eta(1), \\
- A + B -  C + D &\;=\;\eta_V (-1) \;=\;  \eta(-1).
\end{align*}
This system of $3$ equations for $4$ variables can be solved by choosing $D=0$, 
$$
  A \;=\;\tfrac{\eta (1)-\eta (-1)}{2},\qquad
  B\;=\;\eta (0),\qquad
  C\;=\;\eta (0)-\tfrac{\eta (1)+\eta (-1)}{2} , \qquad D=0 .
$$
As explained above, this choice defines a vector field $\eta_V\in \rT_{\mathrm{id}} \mathrm{Aut}(S^2) = V_{\CP^1}$ on $S^2$, and using \eqref{eq:description-of-A_CP1} we ensured that it has the desired values $\eta_V(z_i)=\eta(z_i)$.
Now $\eta_E:=\eta-\eta_V$ satisfies $\eta_E (z_i)=0$ for $i=0,1,2$, and hence we have $\eta =\eta_E +\eta_V$ with $\eta_E \in E_{\CP^1}$.
This finishes the proof of (a) and thus proves this theorem.
\end{proof}

\begin{rmk} \rm 
    \label{rmk:transversality-at-1}
    If $\cM_1=\emptyset$, then $\sigma$ is transverse to the zero section at $t=1$. The statement that the linearization is surjective for all $u\in \cM_1$ is then vacuously true.
\end{rmk}

\begin{appendix}
\section{The Monotonicity Lemma for pseudoholomorphic maps}
\label{app:monotonicity-lemma-for-pseudoholomorphic-maps}

The purpose of this appendix is to give a detailed proof of the monotonicity lemma for $J_{\text{st}}$-holomorphic maps to $\R^{2n}$ that was used in §\ref{sec:using-monotonicity}, which avoids the use of special properties of holomorphic maps such as their local representation. Other proofs can be found in the literature; e.g.\ \cite[Thm.5.2.1]{Wendl}. 
We also use the opportunity to establish the result in maximal generality -- for maps of regularity $\cC^1$, and with $(\R^{2n},J_{\text{st}})$ generalized to a complex Hilbert space as follows.

\begin{defn}
A {\bf complex Hilbert space} $(V,J)$ consists of a Hilbert space $V$ with inner product $\inner\cdot\cdot$ and a compatible complex structure $J$, i.e.\ an endomorphism $J:V\to V$ with $J^2 = -\id_V$ that preserves the inner product. The associated symplectic structure $\omega:V\times V \to \R$ is $\omega(v_1,v_2)=\inner{J v_1}{v_2}$.

\noindent A {\bf pseudoholomorphic map} $v:(S,j)\to(V,J)$ consists of a compact Riemann surface $(S,j)$ with (possibly empty) boundary $\partial S$ and a $\cC^1$-map\footnote{
If $V$ is finite dimensional, then $v$ is automatically smooth by elliptic regularity.
} $v:S\to V$ satisfying the Cauchy-Riemann equation $\rd v \circ j = J \circ \rd v$.
\end{defn}

\begin{lem}\label{lem:holmonotone}
Consider a nonconstant\footnote{More precisely, we assume that $v$ is not constant on any connected component of the domain. This excludes the pathological case of $v\equiv p$ on one connected component and on the other components covering just a small symplectic area in $V\setminus B_R(p)$.} $(j,J)$-holomorphic map $v:S \to V$ and an open ball $\mathring{B}_R(p):=\{q\in V \,|\, \|q-p\| < R\}$ centered at a point $p\in v(S)$ in the image, of radius $R>0$ such that $\|v(z)-p\|\geq R$ for all $z\in\partial S$. 
Then the symplectic area of $v$ within the ball is at least the area of the flat disk of radius $R$, that is 
$$ \int_{v^{-1}(\mathring{B}_R(p))} v^*\omega \ge \pi R^2.$$
\end{lem}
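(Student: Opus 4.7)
My plan is to establish a differential inequality for the area function
\[ a(r) := \int_{v^{-1}(\mathring{B}_r(p))} v^*\omega, \qquad r\in(0,R], \]
which forces $a(r)/r^2$ to be non-decreasing, and then to combine this with a local initial asymptotic $\liminf_{r\to 0^+} a(r)/r^2 \geq \pi$ at a preimage of $p$. The analytic setup and the differential inequality should be routine; the main obstacle will be justifying the initial condition under only $\cC^1$ regularity.

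For the setup, the hypothesis $\|v(z)-p\|\geq R$ on $\partial S$ ensures that $S_r := v^{-1}(\mathring{B}_r(p))$ has compact closure in the interior of $S$ for every $r<R$. By Sard's theorem applied to the $\cC^1$ function $\rho:=\|v-p\|$, almost every $r\in(0,R)$ is a regular value, so $\partial S_r=\rho^{-1}(r)$ is a smooth $1$-manifold in the interior of $S$. Fixing any Hermitian metric $h$ on $(S,j)$, conformality of holomorphic maps gives a well-defined function $\lambda: S\to[0,\infty)$ with $|\rd v(\tau)|_V = \lambda(z)$ for every $h$-unit tangent $\tau\in\rT_z S$, and $v^*\omega = \lambda^2\,d\mathrm{vol}_h$. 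In particular $v^*\omega\geq 0$, so $a$ is non-decreasing and absolutely continuous.

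Next I will derive the inequality via two bounds involving the length $L(r)$ of the image curve $v|_{\partial S_r}$ in $V$. Using the primitive $\tilde\alpha_q(\xi):=\tfrac12\omega(q-p,\xi)$ of $\omega$, Stokes' theorem gives $a(r)=\int_{\partial S_r}v^*\tilde\alpha$; combined with the pointwise bound $|\omega(v-p,\rd v(\tau))|\leq\|v-p\|\cdot\lambda=r\lambda$ on $\partial S_r$, this yields $a(r)\leq\tfrac{r}{2}L(r)$. On the other hand, Cauchy-Schwarz applied to $\rd\rho(\xi)=\inner{(v-p)/\rho}{\rd v(\xi)}$ gives $|\rd\rho|_h\leq\lambda$, so the coarea formula produces
\[ a'(r) = \int_{\partial S_r}\frac{\lambda^2}{|\rd\rho|_h}\,d\mathrm{length}_h \;\geq\; \int_{\partial S_r}\lambda\,d\mathrm{length}_h = L(r) \]
for a.e.\ $r\in(0,R)$. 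Together these give $2a(r)\leq r\,a'(r)$, equivalent to $(a(r)/r^2)'\geq 0$ a.e., so $a(r)/r^2$ is non-decreasing on $(0,R]$.

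It remains to establish the initial asymptotic. Choose $z_0\in S$ with $v(z_0)=p$. Since $\rd v(z_0):\rT_{z_0}S\to V$ is complex linear, if $\rd v(z_0)\neq 0$ then it is conformal with some factor $\lambda_0>0$; a first-order expansion in a holomorphic chart centered at $z_0$ gives $\|v(z)-p\|=\lambda_0|z-z_0|+o(|z-z_0|)$, and a direct computation yields $\lim_{r\to 0^+}a(r)/r^2=\pi$. The degenerate case $\rd v(z_0)=0$ is where the main obstacle lies: one needs a local factorization $v(z)-p=(z-z_0)^k h(z)$ with $k\geq 1$ and $h$ continuous with $h(z_0)\neq 0$, provided by the Carleman similarity principle (or, in the finite-dimensional non-squeezing application, by elliptic regularity reducing $v$ to a smooth map admitting a classical local normal form). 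This gives $\lim_{r\to 0^+}a(r)/r^2=k\pi\geq\pi$, and monotonicity of $a(r)/r^2$ then yields $a(R)\geq\pi R^2$, as claimed.
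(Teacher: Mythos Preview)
Your approach is sound and follows the classical monotonicity argument: combine the isoperimetric bound $a(r)\le\tfrac r2 L(r)$ from Stokes with the coarea bound $a'(r)\ge L(r)$ to obtain $(a/r^2)'\ge0$ a.e., then feed in the local asymptotic at a preimage of $p$. The paper takes a different route on both halves. In place of coarea/Sard, it proves a \emph{difference} inequality $A(r+\eps)-A(r)\ge\tfrac{2\eps}{r+\eps}A(r)$ by integrating $v^*\omega$ against a piecewise-linear cutoff $f_\eps(\|v-p\|)$ and integrating by parts via the weak Laplace equation for $v$; summing over a partition and letting the mesh go to zero gives monotonicity of $A(r)/r^2$ without ever differentiating $A$. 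This matters because two steps you treat as routine are not quite so in the stated $\cC^1$ generality: Sard's theorem \emph{fails} for $\cC^1$ maps from a surface to $\R$ (Whitney's example), and ``$(a/r^2)'\ge0$ a.e.'' does not by itself imply monotonicity without absolute continuity of $a$, which you assert but do not prove. For the initial condition, rather than invoking Carleman similarity at a critical preimage (not standard in the Hilbert-space setting), the paper sidesteps the degenerate case entirely: it shows critical points of $v$ are isolated by projecting to a complex line, chooses regular $z_n\to z_0$ with $p_n=v(z_n)\to p$, proves the bound for each $p_n$, and passes to the limit using $\mathring B_{R-\|p_n-p\|}(p_n)\subset\mathring B_R(p)$. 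All of your gaps are patchable once you observe that a $\cC^1$ holomorphic map into a complex Hilbert space is automatically analytic (Cauchy integral formula for Banach-valued functions), hence smooth with the usual local normal form; but the paper's explicit aim is to give a proof that avoids such local-representation results and stays within bare $\cC^1$ arguments.
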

\begin{proof}
We begin by rewriting the 2-form $v^*\omega$ on $S$ in local holomorphic coordinates $s+it\in\C$ for $(S,j)\simeq(\C,i)$ as
\begin{align}
v^*\omega 
&\;=\; 
\omega(\partial_s v,\partial_t v) \, \rd s \wedge \rd t \nonumber\\
&\;=\; 
\tfrac 12  \bigl(\|\partial_s v\|^2 + \|\partial_t v\|^2 \bigr) \, \rd s \wedge \rd t  \label{eq:vstaromega} \\
&\;=\;
\tfrac 12 \left\langle\bigl( \partial_s v \, \rd s + \partial_t v \, \rd t \bigr) \wedge \bigl( \partial_s v \, \rd t - \partial_t v \, \rd s \bigr) \right\rangle
\;=\;
\tfrac 12 \left\langle \rd v \wedge * \rd v \right\rangle . \nonumber
\end{align}
Here the Cauchy-Riemann equation $\partial_s v + J\partial_t v = 0$ in local coordinates together with compatibility of $\omega$ and $ J$ with the inner product implies
$$
\omega (\partial_s v , \partial_t v ) 
= \langle J \partial_s v , \partial_t v \rangle 
= \langle - J^2 \partial_t v , \partial_t v \rangle
= \|\partial_t v\|^2 = \|\partial_s v\|^2 .
$$ 
The final result $\tfrac 12 \left\langle \rd v \wedge * \rd v \right\rangle$ is a well defined global expression (i.e.\ independent of coordinates) that uses the Hodge operator $*:\rT S \to \rT S$ induced by the complex structure $j$. In local holomorphic coordinates the Hodge operator is given by $*\rd s = \rd t$ and $*\rd t = -\rd s$. 
Here and below the notation $\langle \alpha \wedge \beta \rangle$ for differential forms with values in $V$ denotes the wedge product $\wedge$ on the level of differential forms, with two values in $V$ being multiplied via the inner product $\langle \cdot, \cdot \rangle$.

The second expression for $v^*\omega$ in \eqref{eq:vstaromega} shows that the area $A(r):= \int_{v^{-1}(\mathring{B}_r(p))} v^*\omega$ is non-negative and grows monotone with $r$, since it integrates a non-negative multiple of the area form on $S$ over domains $v^{-1}(\mathring{B}_r(p))$ that grow with $r$. Classical monotonicity proofs now argue that the ratio $a(r):=r^{-2} A(r)$ as function\footnote{
Monotonicity of $A(r)$ implies that this function is differentiable almost everywhere and has at most countably many jump discontinuities. Then the same holds for the ratio function $a(r)$ since $r\mapsto r^{-2}$ is smooth on the domain $(0,R]$. 
} 
of $r\in (0,R]$ satisfies $\frac{\rd}{\rd r}a\ge 0$ and $\lim_{r\to 0} a(r) \ge \pi$, which implies the claim $a(R)\ge \pi$.
We will follow the same line of argument but avoid differentiability concerns by establishing a uniform difference estimate
\begin{equation}\label{eq:monotone}
A(r+\eps) - A(r)  \;\geq\; \tfrac{2 \eps}{r+\eps} A(r) 
\qquad\textup{ for all }\; 0<r<R,\; 0<\eps<R-r.
\end{equation}

\smallskip\noindent
{\bf Proof of difference estimate:}
To prove \eqref{eq:monotone}, we simplify notation by assuming without loss of generality that $p=0$. This can be achieved by applying a global shift which does not affect the area. We can estimate the area of $v$ in $\mathring{B}_r(0)$ by 
\begin{equation}\label{eq:upper}
\textstyle
A(r) =  \int_{v^{-1}(\mathring{B}_r(0))} v^*\omega
\;\leq\;
\tfrac 12 \int_S  f_\eps(\|v\|) \langle \rd v \wedge * \rd v \rangle  , 
\end{equation}
where $f_\eps:[0,\infty)\to[0,1]$ is the continuous, piecewise linear cutoff function with $f_\eps|_{[0,r]}\equiv 1$, $f_\eps|_{[r+\eps,\infty]}\equiv 0$, and $f'_\eps|_{(r,r-\eps)}=-\eps^{-1}$.
On the other hand, integration by parts yields 
\begin{align}
\textstyle
\int_S  f_\eps(\|v\|) \langle \rd v \wedge * \rd v \rangle 
&= \textstyle\int_{\partial S}  f_\eps(\|v\|) \langle v , * \rd v \rangle 
- \int_S  f'_\eps(\|v\|) \, \rd \| v \| \wedge  \langle v , * \rd v \rangle  \nonumber\\
&\leq \textstyle  - \int_S f'_\eps(\|v\|)  \tfrac 12 \|v\|   \langle \rd v \wedge * \rd v \rangle \nonumber\\
&\leq \textstyle \frac{r+\eps}{\eps} \int_{v^{-1}(\mathring{B}_{r+\eps}(0))\setminus v^{-1}(\mathring{B}_r(0))}  \tfrac 12   \langle \rd v \wedge * \rd v \rangle  \nonumber\\
&= \textstyle \tfrac{r+\eps}{\eps} \bigl( A(r+\eps) - A(r)\bigr) . 
\label{eq:lower}
\end{align}
Here the first step uses a weak version of the Laplace equation $\rd*\rd v=0$ which follows from the Cauchy-Riemann equation $\overline\partial_J v = 0$. If $v$ is twice differentiable then this can be checked in local coordinates,
$$
* \rd*\rd v = - \partial_s^2 v - \partial_t^2 v =  ( - \partial_s + J \partial_t)  ( \partial_s v + J \partial_t v) = 
(\overline\partial_J )^* \overline\partial_J v = 0.
$$
Otherwise we first consider smooth functions $w:S\to V$ with $w|_{\partial S}=0$ and calculate using integration by parts
$$
\int_S \langle \rd w \wedge * \rd v \rangle  
= \int_S \langle \rd * \rd w , v \rangle
= \int_S \langle (\overline\partial_J )^* \overline\partial_J w , v \rangle\; {\rm dvol}
= \int_S \langle \overline\partial_J w , \overline\partial_J  v \rangle\; {\rm dvol}  = 0 .
$$
Then we note that this identity extends by continuity to $\cC^1$ functions such as $w = f_\eps(\|v\|) v$,
which vanishes on $\partial S$ since $\|v(\partial S)\|$ takes values in $[R,\infty)$, where $f_\eps$ vanishes since $r+\eps<R$. This yields the first step in \eqref{eq:lower}, with the integral over $\partial S$ vanishing. 

The second step in \eqref{eq:lower} can be checked in local holomorphic coordinates and using \eqref{eq:vstaromega} again,
\begin{align*}
 \rd \| v \|  \wedge  \langle v , * \rd v \rangle 
&=  \tfrac 1{2\|v\|} \bigl( \inner{v}{\partial_s v}\rd s + \inner{v}{\partial_t v}\rd t \bigr) \wedge   
   \bigl( \inner{v}{\partial_s v}\rd t - \inner{v}{\partial_t v}\rd s \bigr)   \\
&=  \tfrac 1{2\|v\|}  \bigl( \inner{v}{\partial_s v}^2  + \inner{v}{\partial_t v}^2 \bigr) \rd s \wedge  \rd t  \\
&\geq \tfrac {1}{2\|v\|} 
 \bigl( \|v\|^2\|\partial_s v\|^2  + \|v\|^2\|\partial_t v\|^2   \bigr)  \rd s \wedge  \rd t 
  =  \tfrac {\|v\|}{2}  \langle \rd v \wedge * \rd v \rangle .
\end{align*}
The third step in \eqref{eq:lower}  follows from $f'_\eps(\|v\|)\equiv 0$ unless $r+\eps \geq \|v\| \geq r$, and $f'_\eps = \eps^{-1}$ where it doesn't vanish.
Now combining \eqref{eq:upper} and \eqref{eq:lower} proves \eqref{eq:monotone},
$$ \textstyle
 \tfrac{r+\eps}{\eps}\bigl( A(r+\eps) - A(r) \bigr)
\;\geq\;
 \int_S  f_\eps(\|v\|) \langle \rd v \wedge * \rd v \rangle
\;\geq\; 
2 A(r)   .
$$

\smallskip\noindent
{\bf Monotone growth of ratio function:}
In terms of the ratio function $a(r)=r^{-2}A(r)$, the difference estimate \eqref{eq:monotone} implies 
\begin{align*}
(r+\eps)^2 \bigl( a(r+\eps) - a(r)\bigr) 
&= A(r+\eps) - \tfrac{(r+\eps)^2}{r^2} A(r) 
\\
&\geq A(r) + \tfrac{2 \eps}{r+\eps} A(r) - \tfrac{(r+\eps)^2}{r^2} A(r) \\
&=  \tfrac{r^3 +\eps r^2 + 2 \eps r^2 - r^3 - 3\eps r^2 - 3\eps^2 r - \eps^3}{(r+\eps)r^2} A(r) 
\; =\;   - \eps^2 \tfrac{3 r + \eps}{r+\eps}  a(r) . 
\end{align*}
Now for fixed $0<r_0<r_1<R$ we have $a(r+\eps)-a(r)\geq - C \eps^2$ for all $r\in[r_0,r_1]$ and $0<\eps<R-r_1$, with a non-negative constant 
$$
C\,:=\; \max_{r\in[r_0,r_1], \eps\in(0,R-r_1)} \frac{3 r + \eps}{(r+\eps)^3} a(r)
\;\leq\; \frac{3r_1 + (R-r_1)}{{r_0}^5} \int_S u^*\omega .
$$
Then summation with $\eps=\frac {r_1-r_0} N$ (with $N$ sufficiently large for $\eps<R-r_1$) yields 
$$
a(r_1)-a(r_0)= {\textstyle \sum_{n=0}^{N-1}  a(r_0 + n\eps +\eps ) - a(r_0 +n\eps) } \geq - N C  \bigl( \tfrac {r_1-r_0}N\bigr)^2 .
$$
Taking $N\to\infty$ this implies $a(r_1)\geq a(r_0)$ for any $0<r_0<r_1<R$.
Next, taking the limit $r_1\to R$ for fixed $r_0>0$ yields
\begin{equation}\label{eq:A(R)bound}
A(R)
\;\geq\;\lim_{r_1\to R} A(r_1) 
\;\geq\; \lim_{r_1\to R} r_1^2 a(r_1)
\;\geq\; R^2 a(r_0) .
\end{equation}
So to prove the claim $A(R)\geq \pi R^2$ it remains to establish $\lim_{r_0\to0} a(r_0)\geq\pi$.

\smallskip\noindent
{\bf Centering the ball at a regular point:}
Before studying the $r\to 0$ limit, we claim that it suffices to prove the area bound $A(R)\geq \pi R^2$ after replacing $p=v(z)$ with a sequence $p_n=v(z_n)\to p$ of images of regular points $S\setminus\partial S \ni z_n\to z$, regular just meaning that they are not critical points of $v$.
Indeed, given such a sequence and assuming the area bound holds on all balls $\mathring{B}_R(p_n)$, we have 
$$
A(R)\;=\;\int_{v^{-1}(\mathring{B}_R(p))} v^*\omega \;\geq\; \int_{v^{-1}(\mathring{B}_{R - \|p_n-p\|}(p_n))} v^*\omega \;\geq\; \pi (R - \|p_n-p\|)^2
$$ 
since $\mathring{B}_{R - \|p_n-p\|}(p_n)\subset \mathring{B}_R(p)$. This proves $A(R)\geq \pi R^2$ in the limit $\|p_n-p\|\to 0$. Moreover, such a sequence of regular points exists since the critical points of $v$ are isolated in $S$. 
For $\dim V<\infty$ this is proven in \cite[Lemma~2.4.1]{McDuffSalamon2}. 
If $\dim V=\infty$ first note that by the Cauchy-Riemann equation in local coordinates, $J\partial_s v = \partial_t v$, any $z\in S$ is either regular (i.e.\ $\rd_z v$ is injective) or critical (i.e.\ $\rd_z v=0$). 
Next, choose a complex splitting $(V,J)\simeq (\C,i)\oplus (V',J')$ in which the first component ${\rm pr}_{\C}\circ v$ is nonconstant (e.g.\ by splitting off $\im\,\rd_z v\cong\C$ at a regular point).
Then classical complex analysis asserts that the critical points of the nonconstant holomorphic map ${\rm pr}_{\C}\circ v:S\to\C$ are isolated, so in particular the critical points of $v$ are isolated.

\smallskip\noindent
{\bf Lower bound for ratio as $\mathbf{r\to 0}$:}
Once $p=v(z_0)$ is the image of a regular point $z_0\in S\setminus\partial S$ of $v$, we choose a neighbourhood $U_0\subset S$ of $z_0$ with local holomorphic coordinates $U_0\cong D_\delta:=\{(s,t)\in\R^2\,|\, s^2+t^2 <\delta^2 \}$ for some $\delta>0$, so that $z_0\cong(0,0)$ and $v|_{U_0}$ is given by $v_0(s,t)= p + s X_0 + t J X_0  + h(s,t)$ with a vector $X_0=\partial_s v_0(0,0)\in V$ of length $\|X_0\|=1$ and an error term $h$ with $h(0,0)=0$ and $\rd h(0,0)=0$. 
Then we can bound the area $A(r)\geq \int_{D_\delta} \chi\bigl(\tfrac{\|v_0-p\|}{r}\bigr) v_0^* \omega$ by an integral of the characteristic function $\chi$ with $\chi|_{[0,1]}\equiv 1$, $\chi|_{(1,\infty)}\equiv 0$. Moreover, the limit $\lim_{r\to 0}a(r)$ exists, since we already proved monotonicity of $a$ and it is bounded below by $0$. So to prove $\lim_{r\to 0}a(r)\geq\pi$ it suffices to find a sequence $r_n\to 0$ with $\lim_{n\to\infty} r_n^{-2}\int_{D_{r_n}} \chi\bigl(\tfrac{\|v_0-p\|}{r_n}\bigr) v_0^* \omega = \pi$. 
To construct this sequence of radii we use the continuous differentiability of $h$ to choose $0<r_n\leq\delta$ for each $n\in\N$ so that $\|\rd h(s,t)\|\leq \frac 1n$ and $\|h(s,t)\|\leq  \frac 1n |(s,t)|$ for $|(s,t)|\leq r_n$. 
Then on $D_{r_n }$ we can estimate
\begin{align*}
\bigl| \omega(\partial_s v , \partial_t v ) - 1  \bigr|
&\;=\;
\bigl| \omega\bigl( X_0 + \partial_s h \,,\, J X_0  + \partial_t h \bigr)  -  \omega( X_0 , J X_0  ) \bigr| \\
&\;=\;
\bigl| \omega( X_0 , \partial_t h ) + \omega( \partial_s h , X_0 ) + \omega( \partial_s h , \partial_t h ) \bigr| \\
&\;\leq\;   \tfrac 1n  + \tfrac 1n + (\tfrac 1n)^2 \; \leq \; \tfrac 3n .
\end{align*}
To control the distance $\|v(s,t)-p\|$ we use $2xy \leq  \frac 1n x^2 + n y^2$ to obtain
\begin{align*}
& \bigl| \|v(s,t)-p\|^2 - |(s,t)|^2 \bigr| 
\;=\; \bigl\| s X_0 + t J X_0  + h(s,t)\bigr\|^2 - (s^2 + t^2)   \\
&\quad\leq 2 |s| |\langle X_0 , h (s,t) \rangle | + 2 |t| |\langle JX_0 , h (s,t) \rangle | + \|h(s,t)\|^2  \\
&\quad\leq \tfrac 1n (s^2 + t^2) + n \bigl( |\langle X_0 , h (s,t) \rangle |^2 +  |\langle JX_0 , h (s,t) \rangle |^2 \bigr)   +  \|h(s,t)\|^2 \;\leq\;  \tfrac 4n |(s,t)|^2 .
\end{align*}
This holds for $(s,t)\in D_{r_n }$ and implies $\chi\bigl(\tfrac{\|v(s,t)-p\|}{r_n}\bigr)=1$ for $|(s,t)|\leq \frac{r_n}{\sqrt{1+4 n^{-1}}}=:\rho_n$. 
Now writing $\pi=r_n^{-2} \int_{D_{r_n}}  \rd s \, \rd t$, from the above we obtain
\begin{align*}
& \textstyle\left|  r_n^{-2} \int_{D_{r_n}} \chi\bigl(\tfrac{\|v-p\|}{r_n }\bigr) v^* \omega\; -\;  \pi  \right| \\
&= \textstyle  r_n ^{-2} \left| \int_{D_{r_n}} \bigl( \chi\bigl(\tfrac{\|v-p\|}{r_n }\bigr)  \omega(\partial_s v , \partial_t v ) \; -\;   1 \bigr) \,  \rd s \, \rd t  \right| \allowdisplaybreaks\\
&\leq  \textstyle  r_n ^{-2} \left( \int_{D_{\rho_n}} \bigl| \omega(\partial_s v , \partial_t v ) - 1 \bigr|   \;+\; \int_{D_{r_n}\setminus D_{\rho_n}} \bigl| \chi\bigl(\tfrac{\|v-p\|}{r_n }\bigr)  \omega(\partial_s v , \partial_t v ) -  1 \bigr|  \right) \\
& \leq r_n ^{-2}  {\textstyle \left( \int_{D_{\rho_n}} \tfrac 4n \;\rd s \, \rd t  
\;+\; \int_{D_{r_n}\setminus D_{\rho_n}} 1\, \rd s \, \rd t  \right) } \\
& = r_n^{-2} \left(  \tfrac 4n \pi \rho_n^2 + 
 \pi r_n^2 -  \pi \rho_n^2 \right)
\;=\; \tfrac{4 \pi n^{-1} }{1- 4 n^{-1}} + \pi - \tfrac{\pi}{1- 4 n^{-1}} 
\quad\underset{n\to\infty}{\longrightarrow}\quad 0. 
\end{align*}
Together with \eqref{eq:A(R)bound} this finishes the proof,
$$\textstyle
A(R) \;\geq\; R^2 \,\lim_{n\to\infty} a(r_n) \;\geq\; R^2 \, r_n^{-2} \int_{D_{r_n}} \chi\bigl(\tfrac{\|v-p\|}{r_n }\bigr) v^* \omega \;\underset{n\to\infty}{\longrightarrow}\; R^2\pi.
$$
\end{proof}
\end{appendix}

\bibliographystyle{alpha}
\bibliography{bibliography.bib}

\end{document}